\numberwithin{equation}{section}
\theoremstyle{plain}
\newtheorem{theorem}{Theorem}[section]
\newtheorem{lemma}{Lemma}[section]
\newtheorem{proposition}{Proposition}[section]
\newtheorem{remark}{Remark}[section]
\begin{document}


\newcommand\tr{\mathop{\text{tr}}}
\newcommand\e{\mathbb{E}}
\newcommand\var{\mathbb{V\text{ar}}}
\newcommand\diag{\mathop{\text{diag}}}
\newcommand\bl[1]{{\color{blue}#1}}
\newcommand{\ga}{\gamma}
\newcommand{\la}{\lambda}
\newcommand{\eps}{\varepsilon}
\newcommand\cov{\text{Cov}}
\newcommand\E{\mathbb{E}}
\newcommand\R{\mathbb{R}}

 \begin{frontmatter}
   \title{Extreme eigenvalues of large-dimensional spiked Fisher
     matrices with application}
   \runtitle{Large and spiked Fisher matrices with application}

   \begin{aug}
     \author{\fnms{Qinwen} \snm{Wang}\ead[label=e1]{wqw8813@gmail.com}}
     \and
     \author{\fnms{Jianfeng} \snm{Yao}\ead[label=e3]{jeffyao@hku.hk}}

     \runauthor{Q. Wang and J. Yao}

     \affiliation{Zhejiang  University and The University of Hong Kong}

     \address{Qinwen Wang  \\
     Department of Mathematics\\
     Zhejiang University \\
     \printead{e1}
   }

     \address{Jianfeng Yao \\
     Department of Statistics and Actuarial Science\\
     The University of Hong Kong\\
     Pokfulam, \quad
     Hong Kong \\
     \printead{e3}
   }
   \end{aug}

   \begin{abstract}
     Consider two $p$-variate  populations, not necessarily Gaussian,   with
covariance matrices $\Sigma_1$ and $\Sigma_2$,  respectively,
and let $S_1$ and $S_2$ be the sample covariances matrices from
samples of the populations with degrees of freedom  $T$ and $n$,
respectively.
When the difference $\Delta$  between $\Sigma_1$ and $\Sigma_2$ is of
small rank compared to $p,T$ and $n$,
the Fisher matrix $F=S_2^{-1}S_1$ is called a {\em spiked Fisher matrix}.
When  $p,T$ and $n$  grow to
infinity proportionally,
we establish a phase transition for the extreme eigenvalues of
$F$:
when the eigenvalues of $\Delta$ ({\em spikes}) are  above (or under)
a critical value,  the associated  extreme eigenvalues of the Fisher
matrix will converge to some point outside the support of the global
limit (LSD) of other eigenvalues;
otherwise, they  will converge to the edge points of the LSD.
Furthermore, we derive
central limit theorems  for these
extreme eigenvalues of the spiked Fisher matrix. The limiting
distributions are found to be
Gaussian if and only if the corresponding population
spike eigenvalues in $\Delta$ are  {\em simple}.
Numerical examples 
are provided to demonstrate   the finite sample performance of the
results. In addition to classical applications of a Fisher matrix in
high-dimensional data analysis, we propose a new method for the
detection of signals  allowing an arbitrary covariance structure of the
noise.  Simulation experiments are conducted to illustrate the
performance of  this detector.
\end{abstract}

   \begin{keyword}[class=AMS]
     \kwd[Primary ]{62H12}
     \kwd[; secondary ]{60F05}
   \end{keyword}

   \begin{keyword}
     \kwd{Large-dimensional Fisher matrices}
     \kwd{Spiked Fisher matrix}
     \kwd{Spiked population model}
     \kwd{Extreme eigenvalue}
     \kwd{Phase transition}
     \kwd{Central limit theorem}
     \kwd{Signal detection}
     \kwd{high-dimensional data analysis}
   \end{keyword}
 \end{frontmatter}

\section{Introduction}
\label{intro}

Consider two $p$-variate populations with
covariance matrices $\Sigma_1$ and $\Sigma_2$,  respectively,
and let $S_1$ and $S_2$ be the sample covariances matrices from
samples of the populations with degrees of freedom  $T$ and $n$,
respectively.
Specifically,  if both populations are Gaussian, $TS_1$ and $nS_2$
are distributed  as  Wishart  $W_P(T,\Sigma_1)$ and $W_P(n,\Sigma_2)$,
respectively. For testing the equality  hypothesis
$H_0: ~\Sigma_1=\Sigma_2$,  the likelihood ratio  statistic relies on
the $p$ characteristic roots of the determinental equation
\begin{align}
  \label{eq:det}
  |S_1 - l S_2|=0~, \quad l\in\R.
\end{align}
Here and throughout the paper, the determinant of a matrix $A$ is
denoted by either $|A|$ or $\det(A)$. 
As  a  famous story in
multivariate analysis of  last century,  the joint distribution of these
characteristic roots for Gaussian populations
was  simultaneously and independently
published  in 1939  by R. A. Fisher, S. N. Roy, P. L. Hsu and
M. A. Girshick.
When $S_2$ is invertible, these roots are simply the eigenvalues of
the matrix $F=S_2^{-1}S_1$, widely known  as a {\em Fisher matrix}
in the literature, which generalises the one-dimensional Fisher
ratio.

Another breakthrough is the work of \citet{Wachter80}
where he finds a deterministic limit, the celebrated Wacheter distribution,
for the empirical measure  of these roots when the
dimension $p$ grows to infinity
proportionally to the degrees of freedom  $T$ and  $n$ (under the
Gaussian assumption).
Wachter's result has been later
extended to non-Gaussian populations in what is now  called  the random
matrix theory and  two early examples of such extensions  are
\cite{Silverstein85} and \cite{BaiYinKrish87} .
It is also important to notice that the determinental equation
\eqref{eq:det}
arises not only in the classical hypothesis testing problem mentioned
above, it indeed covers also similar equations arising in important
fields of multivariate analysis such as discriminant
analysis, canonical correlation analysis and MANOVA, see
\citet{Wachter80}.

Needless to say that such limiting  results allowing large values of
dimension  $p$ comparable to the degrees of freedom (i.e. sample sizes)
are going to  have much impact on today's high-dimensional data analysis.
A particularly important question is to
investigate the properties of the characteristic roots
under an alternative of form
\begin{equation}
  \label{eq:H1}
  H_1: ~~\Sigma_1 = \Sigma_2 + \Delta~,
\end{equation}
where $\Delta$ is a nonnegative definite matrix of rank $M$.
When $p$, $T$ and $n$ are all large, the discrimination between the
null hypothesis and the alternative is not difficult if the rank
difference $M$ is all large.
The real challenge here lies in detecting a  small rank-$M$  alternative.
In this perspective and assuming $M$ is a fixed integer while
$p$, $T$ and $n$ grow to infinity proportionally,
the empirical measure of the $p$ characteristic roots of
\eqref{eq:det} will  be affected by a difference of order
$M/p$ which vanishes, so that its limit remains the same as in the
null hypothesis, i.e. the Wachter distribution.
In other words,  such global limit from all the
characteristic roots will be of little help  for distinguishing the
two hypotheses.

It happens  that the useful information to detect a small rank
alternative is encoded in a few largest characteristic roots of
\eqref{eq:det}.
In a recent preprint  \cite{DhaJoh14}, by  assuming
both population are Gaussian  and $M=1$,
these authors
show that, when the norm of the  rank-1 difference $\Delta$
({\em spike})
exceeds
a phase transition threshold,
the
asymptotic behaviour of the log-ratio of the joint density of these
characteristic
roots  under a local deviation from the spike  depends only on
the largest characteristic root $l_{p,1}$ and
the statistical experiment of observing all the characteristic
roots is locally asymptotically normal (LAN).
As a by-product of their analysis, the authors
also  establish joint asymptotic normality of a few of
the largest roots
when the corresponding spikes in $\Delta$ (with $M>1$)
exceed  the phase transition threshold.
As it can be guessed, the analysis given in this reference  highly rely on
the Gaussian assumption so that  the joint density function of the
characteristic roots has indeed an explicit form under both the null
and  the
alternative,  and the main results are obtained via an accurate
analytic approximation of the log-ratio of these density functions
when the dimension $p$, $T$ and $n$ grow to infinity proportionally.

Intrigued by these findings,  in this paper, we  explore the same
questions for  {\em general populations without Gaussian assumption}.
It is thus  apparent that the joint density of the characteristic roots
no more exist and new techniques are needed to solve the questions.
Our approach relies on the tools borrowed from the theory of random
matrices.
This theory is closely connected to modern high-dimensional
statistics, and has provided in recent years
many efficient estimation and testing procedures
for  high-dimensional data analysis.
Excellent introduction and surveys on  this approach can be found in
\cite{Bai05}, \cite{Johnstone07}, \cite{JohnTitt09}
and \cite{PaulAue14}.
A  methodology particularly successful  both in theory and applications within this
approach relies on the
{\em spiked population model}  coined  in
\cite{Johnstone01}. This model  deals with one population only with a unit
population covariance matrix $I_p$ and the hypotheses are
simply $H_0: \Sigma_1=I_p$ versus $H_1: \Sigma_1=I_p+\Delta$ where $\Delta$ is a
rank-$M$ difference as in \eqref{eq:H1}. Again for small rank $M$, the
discrimination between both hypotheses will rely on the extreme
eigenvalues of the sample covariance matrix $S_1$.
Important results have been obtained in the last decade on the
behaviour of these extreme eigenvalues.
For example,
the  fluctuation of largest  eigenvalues of a sample covariance matrix
from  a complex spiked Gaussian population
is studied in \cite{BBP05}. These authors uncover a phase transition phenomenon: the
weak limit and the scaling of these extreme eigenvalues
are  different
depending on whether the  eigenvalues of $\Delta$ ({\em spikes})
are above, equal or below a critical value, situations refereed as
{\em super-critical, critical} and {\em sub-critical}, respectively.
In \cite{BaikSilv06}, the authors consider the spiked
population model with general populations
(not necessarily Gaussian). For  the almost sure limits of the
extreme sample eigenvalues of $S_1$, they find that
if a population spike (in $\Delta$) is  large or small enough,
the
corresponding sample spike eigenvalues will converge to a limit outside the
support of the limiting  spectrum ({\em outliers}).
In  \cite{Paul07}, a CLT is established
for these outliers, i.e.  the super-critical case,
under the Gaussian assumption
and assuming that population spikes are simple
(multiplicity 1).
The CLT for super-critical outliers with general populations
and arbitrary multiplicity numbers is developed  in
\cite{BaiYao08}. This theory has been later extended for
{\em generalised spiked population model} in
\cite{BaiYao12}.

In summary, from  the perspective of spiked population model, the
Fisher matrix $F=S_2^{-1}S_1$ under the alternative \eqref{eq:H1} can
be viewed as a {\em  spiked Fisher matrix}
and it is important to  establish a theory for  this 
two-population Fisher matrix $F$ in the vein of  
the  results discussed above  on the 
one-population spiked covariance matrix $S_1$.
As said before,  in \cite{DhaJoh14}, the
authors have already identified the transition phenomenon
for the extreme eigenvalues under the Gaussian assumption,
and these eigenvalues  are proved to be
asymptotic normal  assuming that the  spike eigenvalues in $\Delta$ are simple.
The main contributions of the paper are the following.
We prove that this phase transition phenomenon for extreme eigenvalues of a
spiked Fisher matrix is {\em universal}, valid for general populations under
some suitable moment conditions.
Next, we provide a general CLT  for the extreme sample
eigenvalues of $F$ in the super-critical regime: the limiting
distributions are not necessarily Gaussian; they are Gaussian 
{\em if and only if} the population spikes in $\Delta$ are simple.

In addition to the motivations  given so far on the importance of a
spiked Fisher matrix,  we are able  to implement an application of
the general theory developed in this paper
in the context of a signal detection problem with a large number of
detectors, see Section~\ref{sec:app}. Indeed, this problem has its own
interests and even with  quite limited experiments, we show that our
implementation can lead to very reliable solutions.

Finally, within the theory of random matrices, the techniques we use
in this paper for
spiked models are
 closely connected to other random matrix ensembles through the concept of small-rank
perturbations. The goal is again to examine the effect caused on
the extreme sample  eigenvalues by such perturbations.
Theories  on  perturbed  Wigner matrices can be found in
\cite{Peche06},
\cite{FerPec07},
\cite{CapDonFer09},
\cite{Pizzo13} and
\cite{RenSosh13}.
In a more general setting of finite-rank perturbation
including both the additive and the multiplicative one,
point-wisely
convergence of extreme eigenvalues is established in
\cite{BenNad11}
while their fluctuations are studied in  \cite{BGM11}.
In addition, \cite{BenNad11} contain also
results on spiked eigenvectors.

The rest of the paper is organised as  follows.
First,  the exact setting of the spiked Fisher matrix $F=S_2^{-1}S_1$ is
introduced in Section~\ref{sec:model}.
Then in  Section~\ref{sec:outlier},
we establish the phase transition
phenomenon for the extreme eigenvalues of $F$ where the transition
boundary is explicitly obtained.
Next, CLTs for those extreme eigenvalues  fluctuating around
some outliers (i.e. the super-critical case) are established first in Section~\ref{sec:CLT}
for one group of sample eigenvalues
corresponding to a  same population spike, and then in
Section~\ref{sec:CLT2} for all the groups jointly.
Section~\ref{ni} contains numerical illustrations that demonstrate  the finite sample performance of our results.
In Section~\ref{sec:app},  we develop in details a  signal detection
technique with prewhitening.
Proofs of the main theorems are included in these sections while some
technical lemmas are postponed into the Appendix \ref{sec:lemmas}.

\section{Spiked Fisher matrix and preliminary results}
\label{sec:model}

In what follows, we will assume that $\Sigma_2=I_p$. This assumption
does not loss any  generality since the eigenvalues of the Fisher
matrix $F=S_2^{-1}S_1$ are invariant  under the transformation
$S_1\mapsto \Sigma_2^{-1/2} S_1\Sigma_2^{-1/2}$,
$S_2\mapsto \Sigma_2^{-1/2} S_2\Sigma_2^{-1/2}$.
Also we will write $\Sigma_p$ for $\Sigma_1$ to signify the dependence
on the dimension $p$.
Therefore, 
the sample covariance matrices $S_1$  and $S_2$ that make up the
Fisher matrix $F=S_2^{-1}S_1$ are assumed to have the following structure.
Let
\begin{align}\label{z}
 Z = (z_1, \ldots z_n) =(z_{ij})_{1\le i\le p, 1\le j\le n}
\end{align}
and
\begin{align}\label{wkl}
 W = (w_1, \ldots w_T) =(w_{kl})_{1\le k\le p, 1\le l\le T}
\end{align}
be two independent arrays,  with respective size $p\times n$ and $p\times T$,
of independent real-valued random variables with mean 0 and variance
1.  The sample covariance matrix $S_2$ is
\begin{equation}
  \label{eq:S2}
  S_2=\frac1n \sum_{j=1}^n z_j z_j^* =
  \frac1n Z Z^* .
\end{equation}
Next,  $\Sigma_p$ is a  rank $M$ perturbation of $I_p$; therefore, we
can assume that it has the  spiked structure of form
\begin{equation}
  \label{eq:Sigma}
  \Sigma_p=\left(\begin{array}{cc}
      \Omega_M& 0\\
      0& I_{p-M}
    \end{array}\right),
\end{equation}
where   $ \Omega_M$ is a  $M\times M$ covariance matrix, $M$ being a
fixed constant,  containing $k$ spike eigenvalues
$(a_i)$,
$\displaystyle (\underbrace {a_1,\cdots, a_1}_{n_1},\cdots,\underbrace{ a_k,\cdots,
  a_k}_{n_k})$,
of respective multiplicity numbers $(n_i)$   ($n_1+\cdots
+n_k=M$). That is, $\Omega_M=U\diag({a_1,\cdots, a_1},\cdots,{ a_k,\cdots,
  a_k})U^*$, where $U$ is a $M \times M$ orthogonal matrix. Consider a sample
$x_1, \cdots, x_T$ of size $T$ that can be expressed as
$x_l:=\Sigma_p^{1/2}w_l$ and let
$X=(x_1,\ldots,X_T)=\Sigma_p^{1/2}W$.  The sample covariance matrix $S_1$
is
\begin{equation}
  \label{eq:S1}
  S_1=\frac1T \sum_{l=1}^T x_l x_l^* =\frac1T XX^*= \Sigma_p^{1/2}
  \left(\frac1T WW^*\right)\Sigma_p^{1/2}.
\end{equation}

Throughout the paper, we consider an asymptotic regime of
Mar\v{c}enko-Pastur type, i.e. 
\begin{equation}
  \label{eq:MPregime}
  p\wedge n \wedge T \rightarrow \infty, \quad
  y_p:=p/n \rightarrow y \in (0,1),\quad
  \text{and}  ~~ c_p:=p/T \rightarrow c >0.
\end{equation}

Recall that the {\em empirical spectral distribution} (ESD) of a
$p\times p$ matrix $A$  with eigenvalues $\{\lambda_j\}$ is the distribution $p^{-1}\sum_{j=1}^p\delta_{\lambda_j}$
where $\delta_a$ denotes the Dirac mass at $a$.  
Since the total rank $M$ generated by the $k$ spikes
is fixed, the ESD  of $F$ will have the same limit (LSD) 
as there were no spikes. This limiting spectral
distribution, the celebrated Wachter distribution,
has been known for a long time.

\begin{proposition}\label{LSD}
  For the Fisher matrix $F=S_2^{-1}S_1$ with the sample covariance
  matrices $S_i$'s given in \eqref{eq:S2}-\eqref{eq:S1}, assume that
  the dimension $p$ and the two sample sizes $n,T$ grow to infinity
  proportionally as in \eqref{eq:MPregime}. Then almost surely,
  the   ESD of $F$   weakly converges  to a deterministic
  distribution  $F_{c,y}$ with a bounded support $[b_1,b]$ and  a density function given by
  \begin{align}\label{fcy}
    f_{c,y}(x)=\left\{\begin{array}{ll}
        \frac{(1-y)\sqrt{(b-x)(x-b_1)}}{2\pi x (c+xy)}~,&
        \quad\text{when}~ b_1\le x\le b~,\\
        0~,& \quad \text{otherwise}~,
      \end{array}\right.
  \end{align}
  where
  \begin{align}\label{b}
    b_1=\left(\frac{1-\sqrt{c+y-cy}}{1-y}\right)^2\quad\text{and}\quad b=\left(\frac{1+\sqrt{c+y-cy}}{1-y}\right)^2.
  \end{align}
  Furthermore, if $c>1$, then $F_{c,y}$ has a point mass $1-1/c$ at the origin. Also, the Stieltjes transform $s(z)$ of $F_{c,y}$ equals:
  \begin{align}\label{st}
    s(z)=\frac{1}{zc}-\frac 1z-\frac{c(z(1-y)+1-c)+2zy-c\sqrt{(1-c+z(1-y))^2-4z}}{2zc(c+zy)}~,\quad z\notin [b_1,b].
  \end{align}
\end{proposition}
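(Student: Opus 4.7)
The plan is to reduce to the unspiked case and then compute the Stieltjes transform explicitly. Since $\Sigma_p$ differs from $I_p$ only on a fixed rank $M$, a standard rank-inequality for empirical CDFs of Hermitian matrices gives that the L\'evy distance between the ESD of $F$ and that of the unspiked Fisher matrix (built with $\Sigma_p$ replaced by $I_p$) is at most $2M/p\to 0$. Hence it suffices to work under $\Sigma_p=I_p$, so that $S_1=\frac{1}{T}WW^{*}$ and $S_2=\frac{1}{n}ZZ^{*}$ become two independent standard-type sample covariance matrices.

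For the unspiked case I would derive the LSD via its Stieltjes transform $s(z)$. First, the Mar\v{c}enko-Pastur theorem (which holds for general entries satisfying the moment conditions in force) says that the ESDs of $S_1$ and $S_2$ converge almost surely to the MP distributions with ratios $c$ and $y$ respectively; since $y<1$, $S_2$ is invertible for large $p$. Next, conditioning on $S_2$, the matrix $F$ shares its non-zero eigenvalues with $\frac{1}{T}W^{*}S_2^{-1}W$, a sample-covariance-type matrix with ``population covariance'' $S_2^{-1}$. Applying Silverstein's equation to this matrix and letting $p,n,T\to\infty$ (so that the ESD of $S_2^{-1}$ approaches the push-forward of the MP law of ratio $y$ under $t\mapsto 1/t$), I obtain, after eliminating the companion Stieltjes transform, a single quadratic equation for $s(z)$ whose coefficients are explicit polynomials in $z$, $c$ and $y$.

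The remaining step is algebraic. Solving that quadratic, with the branch fixed by the normalization $s(z)\sim -1/z$ as $|z|\to\infty$, yields the closed form \eqref{st}. The support endpoints $b_1,b$ arise as the real zeros of the radicand $(1-c+z(1-y))^{2}-4z$, and the density \eqref{fcy} follows by the Stieltjes inversion
\begin{equation*}
f_{c,y}(x)=\frac{1}{\pi}\lim_{\eta\downarrow 0}\operatorname{Im} s(x+i\eta).
\end{equation*}
For $c>1$, the rank of $WW^{*}/T$ is at most $T<p$, so $F$ has at least $p-T$ zero eigenvalues, giving the point mass $1-T/p\to 1-1/c$ at the origin.

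The main obstacle is the derivation of the fixed-point equation for $s(z)$, which requires combining two independent MP laws through Silverstein's equation with a random, asymptotically MP-distributed population covariance. Fortunately, this is precisely the classical convergence theorem of \cite{Silverstein85} and \cite{BaiYinKrish87}, on which I would rely for the existence, uniqueness and convergence of $s(z)$; only the explicit algebraic form \eqref{st} and its Stieltjes inversion to \eqref{fcy} need to be verified by direct computation.
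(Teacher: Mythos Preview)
Your proposal is correct, and it goes well beyond what the paper does. The paper does not give a proof of this proposition at all: it records the result as classical, attributing the Gaussian case to \cite{Wachter80} and the extension to general populations to \cite{Silverstein85} and \cite{BaiYinKrish87}, and it merely remarks that the density \eqref{fcy} can be recovered from Wachter's original formula by the change of variable $z=cx^{2}/\{y(1-x^{2})\}$. Your route is a genuinely different, more constructive one: you reduce to the unspiked case by the rank inequality (which is fine, since $S_2^{-1/2}S_1S_2^{-1/2}$ differs from its unspiked counterpart by a Hermitian perturbation of rank at most $2M$), then condition on $S_2$ and apply the Silverstein self-consistent equation with the random but independent ``population'' $S_2^{-1}$, whose ESD is asymptotically the push-forward of the MP law under inversion. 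One small caveat: the equation you invoke is the 1995 Silverstein equation for matrices of the form $T^{-1}W^{*}T_pW$, not the 1985 paper you cite; the references \cite{Silverstein85} and \cite{BaiYinKrish87} establish the Fisher LSD directly rather than via that fixed-point scheme. Either way, once convergence is granted, the algebra you sketch (solving the quadratic, fixing the branch via $s(z)\sim-1/z$, reading off $b_1,b$ from the radicand, and Stieltjes inversion) is exactly what is needed to land on \eqref{st} and \eqref{fcy}. Your argument has the advantage of yielding the Stieltjes transform directly, which is what the rest of the paper actually uses; the paper's change-of-variable remark is shorter but presupposes Wachter's density.
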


\begin{remark} Assuming both populations are Gaussian, 
  \citep[Theorem 3.1]{Wachter80} derives  the limiting distribution for roots of the
  determinental equation ,
  \[ |TS_1 -   x^2(TS_1+nS_2)|=0, \quad x\in \R.
  \]
  The continuous component of the distribution has a compact support
  $[A^2,B^2]$ with density function proportional to
  $\{(x-A^2)(B^2-x)\}^{1/2} / \{x(1-x^2)\}$. It can be readily checked
  that by the change of variable $z=c x^2/\{y(1-x^2)\}$, the density
  of the  continuous component of the LSD of  $F$   is exactly
  \eqref{fcy}. The validity of this limit for general populations (non
  necessarily Gaussian) is due to 
  \citet{Silverstein85} and \citet{BaiYinKrish87}.
\end{remark}

 For a complex number   $z\notin [b_1,b]$,  we define the following integrals  with respect to $F_{c,y}(x)$:
\begin{align}\label{smm}
  &s(z):=\int \frac{1}{x-z}dF_{c,y}(x)~,~~~~~~~
  m_1(z):=\int \frac{1}{(z-x)^2}dF_{c,y}(x)~,\nonumber\\
  &m_2(z):=\int \frac{x}{z-x}dF_{c,y}(x)~,~~\quad
  m_3(z):=\int \frac{x}{(z-x)^2}dF_{c,y}(x)~,\nonumber\\
  &m_4(z):=\int \frac{x^2}{(z-x)^2}dF_{c,y}(x)~.
\end{align}

\section{Phase transition of the extreme eigenvalues of $F=S_2^{-1}S_1$}
\label{sec:outlier}

In this section, we establish
a phase transition phenomenon for the extreme eigenvalues of $F=S_2^{-1}S_1$,
that is, when a population spike $a_i$ with multiplicity $n_i$
is larger (or smaller) than a critical value,   a packet of $n_i$  corresponding sample
eigenvalues  of
$F$ will  jump outside the support $[b_1,b]$ of its LSD $F_{c,y}$
and  converge all to a fixed limit.
Otherwise, these associated sample eigenvalues
will converge to one of  the edges  $b_1$ and  $b$.

For notation convenience, let $\ga=1/(1-y)\in(1,\infty)$. Define  the function
\begin{equation}
  \label{eq:phi}
  \phi(x) = \frac{\ga x(x-1+c)}{x-\ga},\quad x\ne \ga,
\end{equation}
which is a rational function  with a single  pole $\ga$.
An example is depicted in Figure~\ref{fig:phi} with parameters
$(c,y)=(\frac15,\frac12)$. The function has an asymptote of equation
$g(x)=\ga ( x+c-1+\ga)$ when $|x|\to\infty$.
\begin{figure}[htb]
  \centering
  \vspace{1cm}
  \includegraphics[width=0.7\linewidth]{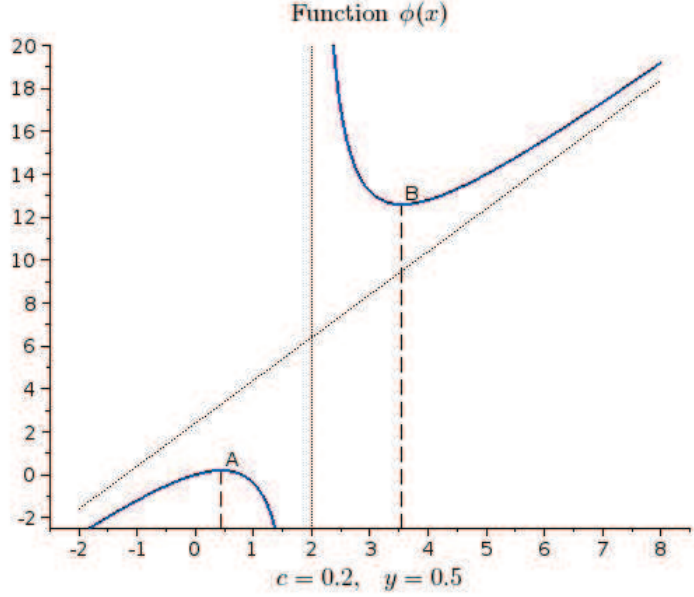}
  \caption{Example of the $\phi$ function with
    $(c,y)=(\frac15,\frac12)$
    and pole $\ga=2$. The asymptote has equation
    $y=2x+\frac{12}{5}$.
    The boundary points are $A(0.450,0.203)$ and $B(3.549,12.597)$
    meaning that critical values for spikes are 0.450 and 3.549 while
    the support of the LSD is [0.203,12.597].
  \label{fig:phi}}
\end{figure}

By assumption, the
$k$ population spike eigenvalues $\{a_i\}$  are all positive  and non unit.
We order them with their multiplicities
in  descending order together with the $p-M$ unit
eigenvalues as
\begin{align}
  a_1=\cdots=a_1>a_2=\cdots=a_2>\cdots
  >a_{k_0}=\cdots=a_{k_0}>1=\cdots=1 > \nonumber\\
  \quad  a_{k_0+1}=\cdots=a_{k_0+1}>\cdots>a_k=\cdots=a_k.
  \label{eq:ai}
\end{align}
That is, $k_0$ of these spike eigenvalues are larger than 1
while the other $k-k_0$ are smaller.
Let
\begin{align*}
  J_i=
  \begin{cases}
    [n_1+\cdots +n_{i-1}+1,n_1+\cdots +n_{i}]~,& \quad 1\leq i \leq k_0~,\\
    [p-(n_i+\cdots+n_k)+1, , p-(n_{i+1}+\cdots +n_{k})]~,& \quad k_0< i
    \leq k~.
  \end{cases}
\end{align*}
Notice that the cardinality of each $J_i$ is  $n_i$.
Next, the sample eigenvalues $\{l_{p,j}\}$ of the Fisher matrix $S_2^{-1}S_1$ are
also sorted in the descending order as
$l_{p,1}\ge l_{p,2}\ge \cdots \ge l_{p,p}$.
Therefore, for each spike eigenvalue $a_i$, there are
$n_i$ associated sample eigenvalues  $\{l_{p,j}, ~j\in J_i\}$.

\begin{theorem}\label{mainth1}
For the Fisher matrix $F=S_2^{-1}S_1$ with the sample covariance
  matrices $S_i$'s given in \eqref{eq:S2}-\eqref{eq:S1}, assume that
  the dimension $p$ and the two sample sizes $n,T$ grow to infinity
  proportionally as in \eqref{eq:MPregime}.
 Then for any spike eigenvalue $a_i$ ($i=1,
  \cdots, k$), it holds that
  for all $j\in J_i$,
  $l_{p,j}$ almost surely converges to a limit
  \begin{align}\label{limit}
    \la_i =
    \begin{cases}
      \phi(a_i), & \quad  |a_i-\ga|  > \ga \sqrt{c+y-cy} ~, \\
      b,   &  \quad     1< a_i \le \ga \{1+\sqrt{c+y-cy}\}~,\\
      b_1,  &  \quad     \ga \{1-\sqrt{c+y-cy}\} \le  a_i <1~.
    \end{cases}
  \end{align}
\end{theorem}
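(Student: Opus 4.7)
The plan is to localise the problem to a rank-$2M$ perturbation of the unspiked Fisher matrix, reduce it by Sylvester's determinant identity to a $2M\times 2M$ master equation, pass to the deterministic limit via concentration of resolvent bilinear forms, and read off the explicit outlier locations $\phi(a_i)$.

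\textbf{Step 1 (Master equation).} Writing $\Sigma_p^{1/2}=I_p+V_M(\Omega_M^{1/2}-I_M)V_M^*$, where $V_M$ is the $p\times M$ matrix of the first $M$ canonical basis vectors, I would expand $S_1=\Sigma_p^{1/2}B_T\Sigma_p^{1/2}$, with $B_T:=T^{-1}WW^*$ denoting the unspiked shadow of $S_1$, as a rank-$2M$ additive perturbation of $B_T$. Collecting all cross-terms,
\[ S_1-lS_2 \;=\; (B_T-lS_2) + P\,A\,P^*, \]
with $P=[\,V_M,\ B_T V_M\,]$ of size $p\times 2M$ and $A$ an explicit $2M\times 2M$ matrix built from $\Omega_M$ and $V_M^*B_TV_M$. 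Because $B_T-lS_2$ is a.s.\ invertible for every $l$ in a fixed closed set disjoint from $[b_1,b]$, by the no-eigenvalue-outside-the-support property applied to the unspiked Fisher matrix $S_2^{-1}B_T$ (whose LSD is also $F_{c,y}$), Sylvester's identity turns the characteristic equation $\det(S_1-lS_2)=0$ into the finite-dimensional master equation
\[ \det\bigl(I_{2M}+A\,P^*G(l)\,P\bigr)=0, \qquad G(l):=(B_T-lS_2)^{-1}. \]

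\textbf{Step 2 (Resolvent limits).} Using the algebraic identities $B_TG(l)=I_p+lS_2G(l)$ and $G(l)B_T=I_p+lG(l)S_2$, the three distinct blocks of $P^*G(l)P$ reduce to $V_M^*G(l)V_M$, $V_M^*G(l)S_2V_M$, and $V_M^*S_2G(l)S_2V_M$. I would show that each of them, as well as $V_M^*B_TV_M$, converges a.s.\ to a scalar multiple of $I_M$, uniformly for $l$ in a compact set $K\subset\R\setminus[b_1,b]$. The scalars are identified with the integrals $s(l),m_1(l),\ldots,m_4(l)$ from \eqref{smm}, via Bai--Silverstein type concentration for bilinear forms in the independent entries of $W$ and $Z$ (applied iteratively, conditioning on one source of randomness at a time), together with the convergence of the ESD of $S_2^{-1}B_T$ to $F_{c,y}$. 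This produces a deterministic limiting master equation $\det\bigl(I_{2M}+\widetilde A(l;\Omega_M)\bigr)=0$.

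\textbf{Step 3 (Solving the limit equation and phase transition).} Since $\Omega_M$ is block-scalar with blocks $a_iI_{n_i}$, the limiting determinant factorises into $n_i$-fold copies of a single scalar factor per distinct spike $a_i$, and an algebraic simplification using the explicit form \eqref{st} of the Wachter Stieltjes transform collapses each factor to $l=\phi(a_i)$, with $\phi$ as in \eqref{eq:phi}. A direct analysis shows that $\phi'(x)$ vanishes precisely at $x=\ga(1\pm\sqrt{c+y-cy})$, with $\phi$-values $b_1$ and $b$, so $\phi$ maps the super-critical region $|a-\ga|>\ga\sqrt{c+y-cy}$ bijectively onto $\R\setminus[b_1,b]$; this is exactly the threshold in \eqref{limit}. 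A standard counting/continuity argument (Baik--Silverstein and Bai--Yao style) then ensures that each super-critical spike attracts exactly $n_i$ sample eigenvalues $l_{p,j}$, $j\in J_i$, all converging a.s.\ to $\phi(a_i)$. For sub-critical spikes the limit equation admits no root outside $[b_1,b]$; an exact-separation argument, obtained by interlacing $F$ with the spike-free auxiliary Fisher matrix constructed by deleting the $n_i$ spike directions, sandwiches the associated $n_i$ sample eigenvalues against the nearest bulk edge and forces their convergence to $b$ or $b_1$ according to whether $a_i>1$ or $a_i<1$.

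\textbf{Main obstacle.} The delicate point is the explicit identification in Step~2 of the limits of $V_M^*G(l)S_2^jV_M$ for $j=0,1,2$ in terms of the Wachter integrals \eqref{smm}. Unlike the spiked sample-covariance model, $G(l)$ carries two independent sources of randomness ($W$ and $Z$) that do not decouple, so the concentration argument must be run iteratively and the resulting scalars recombined through the interplay $B_TG=I+lS_2G$. A secondary technicality is that the master equation is eventually evaluated at the random point $l=l_{p,j}$, requiring a uniform-in-$l$ equicontinuity of the resolvent on a fixed neighbourhood of each candidate limit $\phi(a_i)$, which is standard in the spiked-model literature but somewhat laborious to carry out cleanly in this two-matrix setting.
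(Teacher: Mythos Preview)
Your proposal is correct but the route differs from the paper's. The paper partitions $W$ and $Z$ by rows, writing $Z=\binom{Z_1}{Z_2}$ and $X=\binom{X_1}{X_2}$ with $X_1=\Omega_M^{1/2}W_1$, and applies a Schur complement to $l_{p,j}S_2-S_1$ in this $2\times 2$ block structure; this produces an $M\times M$ determinantal equation whose four terms are bilinear forms in $(Z_1,X_1)$ against a resolvent built from $(Z_2,X_2)$ alone. Because $(Z_1,X_1)$ is independent of $(Z_2,X_2)$, the concentration step is an immediate quadratic-form estimate, and the resulting scalars combine via the Wachter Stieltjes transform into the single equation $\lambda_i(1+y\lambda_i s(\lambda_i))+(-1+c+c\lambda_i s(\lambda_i))a_i=0$, i.e.\ $\lambda_i=\phi(a_i)$. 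You instead treat $S_1$ as a rank-$2M$ additive perturbation of $B_T=T^{-1}WW^*$ and reduce via Sylvester to a $2M\times 2M$ master equation in the full $p$-dimensional resolvent $G(l)=(B_T-lS_2)^{-1}$; this is closer to the Benaych-Georges--Nadakuditi template and more portable to non-block-diagonal $\Sigma_p$. The trade-off is that your test directions $V_M$ are deterministic and \emph{not} independent of $G(l)$ (the first $M$ rows of $W$ and $Z$ sit inside $G$), so the concentration of $V_M^*G(l)S_2^jV_M$ is not the off-the-shelf bilinear-form estimate you invoke but an entrywise/isotropic statement; in practice one obtains it by a further Schur complement on those $M$ rows, at which point the computation collapses back to essentially the paper's four terms. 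Both routes lead to the same limiting equation, and your sub-critical interlacing argument in Step~3 matches the paper's.
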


Basically, the theorem establishes a phase transition phenomenon for
the largest and smallest sample eigenvalues of a Fisher matrix.
Consider again the example  shown in Figure~\ref{fig:phi}.
The transition boundary is indicated with the boundary points $A$ and
$B$ with respective coordinates
\[  A(\ga \{1-\sqrt{c+y-cy}\},b_1)\quad \text{and} \quad B(\ga \{1+\sqrt{c+y-cy}\},b).
\]
When the spike is large enough or small enough, the corresponding
sample eigenvalues converge to $\phi(a_i)$ located outside the support
$[b_1,b]$ of the LSD of $F$. Otherwise, they converge to one of its  edges
$b_1$ and $b$.

It is worth observing that  when $y\to 0$, the $\phi(x)$ function
tends to  the function well-known in the literature for  similar transition phenomenon
of a spiked sample covariance matrix, i.e.
\begin{align}\label{phix}
 \lim_{y\to 0}\phi(x) =  x + \frac{  c x}{x-1},\quad x\ne 1,
\end{align}
see e.g. the $\psi$-function on Figure~4 of \cite{BaiYao12}.
These functions share a same shape; however the pole  here equals
$\ga=1/(1-y)$ which is larger than the pole 1  for the case of a
spiked sample covariance matrix.

As said in Introduction, this transition phenomenon has already been
established in a preprint  \cite{DhaJoh14} (their Proposition 5)
under Gaussian assumption and using a completely different approach.
Theorem~\ref{mainth1} proves that
such a phase transition phenomenon is indeed universal.

\begin{proof}(of Theorem \ref{mainth1})
The proof is divided into the following three steps:
\begin{itemize}
  \item Step 1: we derive the almost sure limit of an outlier eigenvalue of $S_2^{-1}S_1$;
  \item Step 2: we show that in order for the extreme eigenvalue of $S_2^{-1}S_1$ to be an outlier, the population spike $a_i$ should be larger (or smaller) than a critical value;
  \item Step 3: if not so, the extreme eigenvalue of $S_2^{-1}S_1$ will converge to one of the edge points $b$ and  $b_1$.
\end{itemize}

\noindent {\bf Step 1:}
Let $l_{p,j}~ (j\in J_i)$ be the outlier eigenvalue of $S_2^{-1}S_1$ corresponding to the population spike $a_i$. Then $l_{p,j}$  must satisfy the following equation:
\begin{align*}
|l_{p,j} I_p-S_2^{-1}S_1|=0~,
\end{align*}
and it is equivalent to
\begin{align}\label{e1}
|l_{p,j} S_2-S_1|=0~.
\end{align}
Now we make some short-hands. Denote $Z=\left(\begin{array}{c}
Z_1\\
Z_2\end{array}\right)$, where $Z_1$ is the $n$ observations of its
first $M$ coordinates  and $Z_2$ the remaining. We partition $X$
accordingly as  $X=\left(\begin{array}{c}
X_1\\
X_2\end{array}\right),$ where $X_1$ is  the $T$ observations of its first $M$ coordinates  and $X_2$ the remaining. Using such a representation, we have
\begin{align}\label{s12}
  S_1=\frac 1T XX^*=\frac 1T\left(\begin{array}{cc}
      X_1X^*_1& X_1X^*_2\\[2mm]
      X_2X^*_1& X_2X^*_2\end{array}\right),\quad 
  S_2=\frac 1n ZZ^*=\frac1n \left(\begin{array}{cc}
      Z_1Z^*_1& Z_1Z^*_2\\[2mm]
Z_2Z^*_1& Z_2Z^*_2\end{array}\right)~.
\end{align}

\noindent Then, \eqref{e1} could be written in the block form:
\begin{align}\label{e2}
\left|\left(\begin{array}{cc}
\frac {l_{p,j}}n Z_1Z^*_1-\frac 1T X_1X^*_1 & \frac {l_{p,j}}n Z_1Z^*_2-\frac 1T X_1X^*_2\\[2mm]
\frac {l_{p,j}}n Z_2Z^*_1-\frac 1T X_2X^*_1 & \frac {l_{p,j}}n Z_2Z^*_2-\frac 1T X_2X^*_2
\end{array}\right)\right|=0~.
\end{align}
Since $l_{p,j}$ is an outlier, it holds $|l_{p,j}\cdot\frac 1n Z_2Z^*_2-\frac 1T X_2X^*_2|\neq 0$, and
for block matrix, we have $\det\begin{pmatrix}
    A & B \\
    C & D \\
\end{pmatrix}=\det D \cdot \det(A-BD^{-1}C)$ when $D$ is invertible. Therefore, \eqref{e2} reduces to
\begin{align*}
&\left|\frac {l_{p,j}}{n} Z_1Z^*_1-\frac 1T X_1X^*_1\right.\\
&\quad-\left.\Big(\frac {l_{p,j}}{n} Z_1Z^*_2-\frac 1T X_1X^*_2\Big)\Big(\frac {l_{p,j}}{n} Z_2Z^*_2-\frac 1T X_2X^*_2\Big)^{-1}\Big(\frac {l_{p,j}}{n} Z_2Z^*_1-\frac 1T X_2X^*_1\Big)\right|=0~.
\end{align*}
More specifically, we have
\begin{align}\label{maine}
&\det \bigg(\underbrace{\frac {l_{p,j}}{n}Z_1\Big[I_n-Z^*_2\big(l_{p,j}I_p-(\frac 1n Z_2Z^*_2)^{-1}\frac 1T X_2X^*_2\big)^{-1}(\frac 1n Z_2Z^*_2)^{-1}\frac {l_{p,j}}{n} Z_2\Big]Z^*_1}_{(\uppercase\expandafter{\romannumeral1})}\nonumber\\
&\underbrace{-\frac 1TX_1\Big[I_T+X^*_2\big(l_{p,j}I_p-(\frac 1n Z_2Z^*_2)^{-1}\frac 1T X_2X^*_2\big)^{-1}(\frac 1n Z_2Z^*_2)^{-1}\frac 1TX_2\Big]X^*_1}_{(\uppercase\expandafter{\romannumeral2})}\nonumber\\
&+\underbrace{\frac {l_{p,j}}{n}Z_1Z^*_2\big(l_{p,j}I_p-(\frac 1n Z_2Z^*_2)^{-1}\frac 1T X_2X^*_2\big)^{-1}(\frac 1n Z_2Z^*_2)^{-1}\frac 1T X_2X^*_1}_{(\uppercase\expandafter{\romannumeral3})}\nonumber\\
&+\underbrace{\frac 1TX_1X^*_2\big(l_{p,j}I_p-(\frac 1n Z_2Z^*_2)^{-1}\frac 1T X_2X^*_2\big)^{-1}(\frac 1n Z_2Z^*_2)^{-1}\frac {l_{p,j}}{n}Z_2Z^*_1}_{(\uppercase\expandafter{\romannumeral4})}\bigg)\nonumber\\
&=0~.
\end{align}
In all the following, we denote by $S$  the Fisher matrix $\left(\frac
  1n Z_2Z^*_2\right)^{-1}\frac 1T X_2X^*_2$, which has a LSD $F_{c,y}(x)$. And in order to find the limit of $l_{p,j}$, we simply find the limit on the left hand side of \eqref{maine}, then it will generate an equation. Solving this equation will give the value of its limit.

\smallskip
\noindent First, consider the terms $(\uppercase\expandafter{\romannumeral3})$ and $(\uppercase\expandafter{\romannumeral4})$.
Since $(Z_1, X_1)$ is independent of $(Z_2, X_2)$, using Lemma \ref{f1}, we see these two terms will converge to some constant multiplied by  the covariance matrix between $X_1$ and $Z_1$. On the other hand,  $X_1$ is also independent of  $Z_1$, we have
\begin{align*}
\cov (X_1,Z_1)=\E X_1Z_1-\E X_1\E Z_1=\E X_1\E Z_1-\E X_1\E Z_1={\bf 0}_{M \times M}~.
\end{align*}
Therefore, these two terms will both tend to a zero matrix ${\bf 0}_{M
  \times M}$ almost surely.

So the remaining task is to find the limit of $(\uppercase\expandafter{\romannumeral1})$ and $(\uppercase\expandafter{\romannumeral2})$. We recall the expression of $X_1$ and $Z_1$  that
\begin{align*}
\cov (X_1)=U\diag(\underbrace{a_1,\cdots, a_1}_{n_1},\cdots, \underbrace {a_k, \cdots, a_k}_{n_k})U^{*}~,\quad \cov(Z_1)=I_M.
\end{align*}
According to Lemma \ref{f1}, we have
\begin{align}\label{r1}
(\uppercase\expandafter{\romannumeral1})&= \frac {l_{p,j}}{n}Z_1\bigg[I_n-Z^*_2(l_{p,j}I_p-S)^{-1}(\frac 1n Z_2Z^*_2)^{-1}\frac {l_{p,j}}{n} Z_2\bigg]Z^*_1\nonumber\\
&\rightarrow \frac {\lambda_i}{n}\left\{\E \tr \bigg[I_n-Z^*_2(\lambda_iI_p-S)^{-1}(\frac 1n Z_2Z^*_2)^{-1}\frac {\lambda_i}{n} Z_2\bigg]\right\}\cdot I_M\nonumber\\
&=\lambda_i(1+y\lambda_is(\lambda_i))\cdot I_M~,
\end{align}
here, we denote $\lambda_i$ as the limit of the outlier $\left\{l_{p,j}, j \in J_i\right\}$. For the same reason,
\begin{align}\label{r2}
(\uppercase\expandafter{\romannumeral2})&=-\frac 1TX_1\bigg[I_T+X^*_2(l_{p,j}I_p-S)^{-1}(\frac 1n Z_2Z^*_2)^{-1}\frac 1TX_2\bigg]X^*_1\nonumber\\
&\rightarrow -\frac 1T\left\{\e \tr \bigg[I_T+X^*_2(\lambda_iI_p-S)^{-1}(\frac 1n Z_2Z^*_2)^{-1}\frac 1TX_2\bigg]\right\}\cdot U \left(\begin{array}{c}
a_1 \qquad\\
~\ddots~\\
\qquad a_k
\end{array}\right)U^{*}\nonumber\\
&=U\big(-1+c+c\lambda_i s(\lambda_i)\big)\cdot \left(\begin{array}{c}
a_1 \qquad\\
~\ddots~\\
\qquad a_k
\end{array}\right)U^{*}~.
\end{align}

\noindent Therefore, combining \eqref{maine}, \eqref{r1} and \eqref{r2}, we have the determinant of the following $M \times M$ matrix
\begin{align*}
U\begin{pmatrix}
\lambda_i(1+y\lambda_i s(\lambda_i))+(-1+c+c\lambda_is(\lambda_i))a_1 \quad \quad\quad 0   \\
\vdots   \quad \quad \quad\quad \quad  \quad\quad \quad\ddots\quad\quad \quad\quad\quad \quad \quad\quad\quad\vdots  \\
\quad \quad \quad 0\quad \quad \quad \lambda_i(1+y\lambda_i s(\lambda_i))+(-1+c+c\lambda_is(\lambda_i))a_k &\\
\end{pmatrix}U^{*}
\end{align*}
equal to zero, which is also to say that $\lambda_i$ satisfies the equation:
\begin{align}\label{st1}
\lambda_i(1+y\lambda_i s(\lambda_i))+(-1+c+c\lambda_is(\lambda_i))a_i=0~.
\end{align}
Finally, together with the expression of the Stieltjes transform of a
Fisher matrix in \eqref{st}, we have
\begin{align*}
\lambda_i=\frac{a_i(a_i+c-1)}{a_i-a_iy-1}=\phi(a_i)~,
\end{align*}
where the function $\phi (x)$ is defined in \eqref{phix}.

\noindent {\bf Step 2:}
Define $\underline{s}(z)$ as the Stieltjes transform of the LSD of $\frac 1TX^*_2(\frac 1n Z_2Z^*_2)^{-1}X_2$, who shares the same non-zero eigenvalues as $S_2^{-1}S_1$. Then we have the relationship:
\begin{align}\label{est3}
\underline{s}(z)+\frac 1z (1-c)=cs(z)~.
\end{align}
Recall the expression of $s(z)$ in \eqref{st}, we have
\begin{align}\label{stss}
\underline{s}(z)=-\frac{c(z(1-y)+1-c)+2zy-c\sqrt{(1-c+z(1-y))^2-4z}}{2z(c+zy)}~.
\end{align}

\noindent On the other hand,  due to
\eqref{st1} and \eqref{est3}, we have the value for $\underline{s}(\lambda_i)$:
\begin{align}
    \underline{s}(\lambda_i)=\frac{yc-y-c}{y\lambda_i+a_ic}~.
\end{align}
Since $\lambda_i$ is outside the support of the LSD, we have
\begin{align*}
\underline{s}^{-1}\left(\frac{yc-y-c}{y\lambda_i+a_ic}\right)=\lambda_i>b\quad \text{or}\quad \underline{s}^{-1}\left(\frac{yc-y-c}{y\lambda_i+a_ic}\right)=\lambda_i<b_1~
\end{align*}
which is also to say that
\begin{align}\label{e5}
\underline{s}(b)<\frac{yc-y-c}{y\lambda_i+a_ic}~,
\end{align}
or
\begin{align}\label{sb1}
    \underline{s}(b_1)>\frac{yc-y-c}{y\lambda_i+a_ic}~.
\end{align}
Then \eqref{e5} says that $\underline{s}(b)$ must be smaller than the minimum value on its right hand side, whose minimum value is attained when $\lambda_i=b$ (the right hand side of \eqref{e5} is a decreasing function of $\lambda_i$).
Similarly, \eqref{sb1} says that $\underline{s}(b_1)$ must be larger than the maximum value on its right hand side, which is attained when $\lambda_i=b_1$.
Therefore,  the condition for  $\lambda_i$ be  an outlier is:
\begin{align}\label{e6}
\underline{s}(b)<\frac{yc-y-c}{yb+a_ic},\quad \text{or}\quad \underline{s}(b_1)>\frac{yc-y-c}{yb_1+a_ic}.
\end{align}
Finally, using \eqref{stss} together with the value of $b$ and $b_1$,
we have:
\begin{align*}
a_i>\frac{1+\sqrt{c+y-cy}}{1-y},~\quad \text{or}\quad a_i<\frac{1-\sqrt{c+y-cy}}{1-y},
\end{align*}
which is equivalent to say that (recall the expression of $\gamma$ that $\gamma=1/(1-y)$):
\begin{align*}
  |a_i-\gamma|>\gamma \sqrt{c+y-cy}~.
\end{align*}

\noindent{\bf Step 3:}
In this step, we show that if the condition in Step 2 is not
fulfilled, then the extreme eigenvalues of $S_2^{-1}S_1$ will tend to
one of the edge points $b_1$ and $b$. For simplicity, we only show the
convergence to the right edge $b$: the proof for the convergence to
the left edge $b_1$ is similar.  Thus suppose all the $a_i>1$ for $i=1, \cdots, k$.
For now, we make some short-hands. Let
\begin{align*}
S_1=\frac 1T XX^*=\frac1T\left(\begin{array}{cc}
 X_1X^*_1& X_1X^*_2\\[2mm]
 X_2X^*_1& X_2X^*_2\end{array}\right):=\begin{pmatrix}
B_{11} & B_{12} \\
B_{21} & B_{22} \\
\end{pmatrix}
\end{align*}
and
\begin{align*}
S_2=\frac 1n ZZ^*=\frac 1n\left(\begin{array}{cc}
 Z_1Z^*_1& Z_1Z^*_2\\[2mm]
 Z_2Z^*_1& Z_2Z^*_2\end{array}\right):=\begin{pmatrix}
A_{11} & A_{12} \\
A_{21} & A_{22} \\
\end{pmatrix}
~,
\end{align*}
where $B_{11}$ and $A_{11}$ are the corresponding blocks with size $M \times M$.
Using the inverse formula for block matrix,  the $(p-M)\times (p-M)$ major sub-matrix of $S^{-1}_2S_1$ is
\begin{align}\label{tar}
-(A_{22}-A_{21}A^{-1}_{11}A_{12})^{-1}A_{21}A^{-1}_{11}B_{12}+(A_{22}-A_{21}A^{-1}_{11}A_{12})^{-1}B_{22}:=C~.
\end{align}
The part
\begin{align*}
-(A_{22}-A_{21}A^{-1}_{11}A_{12})^{-1}A_{21}A^{-1}_{11}B_{12}=-(A_{22}-A_{21}A^{-1}_{11}A_{12})^{-1}A_{21}A^{-1}_{11}\cdot\frac 1T X_1X^*_2
\end{align*}
is of rank $M$; besides, we have
\begin{align*}
\tr\left\{(A_{22}-A_{21}A^{-1}_{11}A_{12})^{-1}A_{21}A^{-1}_{11}\frac 1T X_1X^*_2\right\}\rightarrow 0~,
\end{align*}
since $X_1$ is independent of $X_2$. Therefore, the $M$ nonzero eigenvalues of the matrix $-(A_{22}-A_{21}A^{-1}_{11}A_{12})^{-1}A_{21}A^{-1}_{11}B_{12}$ will all tend to zero (so is its largest one).
Then consider the second part of \eqref{tar} as follows.
\begin{align*}
A_{22}-A_{21}A^{-1}_{11}A_{12}=\frac 1n Z_2\left[I_n-Z^*_1\big(\frac 1n Z_1Z^*_1\big)^{-1}\frac 1n Z_1\right]Z^*_2:=\frac 1n Z_2 PZ_2~.
\end{align*}
Since  $P=I_n-Z^*_1\big(\frac 1n Z_1Z^*_1\big)^{-1}\frac 1n Z_1$ is  a projection matrix of rank $p-M$, it has the spectral decomposition:
\begin{align*}
P=V\left(\begin{array}{cccc}
0 &&&\\
&\ddots && \\
&& 0&\\
&&&{I}_{n-M} \\
\end{array}\right)V^*~,
\end{align*}
where $V$ is a $n \times n$ orthogonal matrix.
Since $M$ is fixed, the ESD of $P$ tends to $\delta_1$, which
leads to the fact that the LSD of the matrix $\frac 1n Z_2PZ^*_2$ is
 the standard Mar\v{c}enko-Pastur law. Then the matrix $(\frac 1n
Z_2PZ^*_2)^{-1}B_{22}$ is  a standard Fisher matrix, and its  largest
eigenvalues (finitely many) will tend to the right edge $b$ of the Wachter
distribution.  It follows then the two largest eigenvalues of $C$, say  
$\alpha_1(C)$ and $\alpha_2(C)$, also tend to $b$. 

Next  since $C$ is the $(p-M)\times(p-M)$ major sub-matrix of
$S^{-1}_2S_1$, we have by Cauchy  interlacing theorem
\[ \alpha_2(C)\le l_{p,M+1}\le\alpha_1(C)\le l_{p,1}~.
\] 
Thus $l_{p,M+1}\to b$ either. On the other hand, we have
\begin{align*}
  l_{p,1}= \| S^{-1}_2S_1\|_{op}\leq \|S_2^{-1}\|_{op} \cdot \|
  S_1\|_{op},
\end{align*}
so that for some positive constant $\theta$, 
$\limsup l_{p,1}\le \theta$. 
Consequently, almost surely, 
\[ b\le \liminf l_{p,M} \le \cdots \le \limsup l_{p,1}\le \theta<\infty~;
\] 
in particular the whole family $\{ l_{p,j}, ~1\le j\le M\}$ is
bounded. Now let $1\le j\le M$ be fixed and assume that
a   subsequence $(  l_{ p_k, j})_k$ converges to a limit $\beta\in[b,\theta]$. 
Either $\beta=\phi(a_i)>b$ or $\beta=b$. However, according to Step 2, 
$\beta > b$ implies that  $a_i>\gamma \{1+\sqrt{c+y-cy}\}$, and
otherwise,
we have $a_i\le \gamma \{1+\sqrt{c+y-cy}\}$.
Therefore, accordingly to one of these two conditions,
all subsequences converge to a {\em same}  limit $\phi(a_i)$ or $b$,
which is thus also the unique limit of the whole sequence 
$(l_{p,j})_p$.
\medskip
\noindent The proof of Theorem \ref{mainth1} is complete.
\end{proof}

\section{Central limit theorem for the outlier eigenvalues of
  $S^{-1}_2S_1$}
\label{sec:CLT}

The aim of this section is to give a CLT for the $n_i$-packed outlier eigenvalues:
\begin{align*}
  \sqrt p ~\{l_{p,j}-\phi(a_i), j \in J_i\}~.
\end{align*}
Denote $U=\begin{pmatrix}
            U_1 & U_2 & \cdots & U_k \\
          \end{pmatrix}~,
$
where each $U_i$ is a $M \times n_i$ matrix that corresponds to the $n_i$-packed spike eigenvalue $a_i$.

\begin{theorem}\label{mainth2}
  Assume the same assumptions as in Theorem \ref{mainth1} and in
  addition, the variables $(z_{ij})$ (in \eqref{z}) and $(w_{kl})$ (in
  \eqref{wkl}) have the same first four moments and denote $v_4$ as
  their common fourth moment:
  \[ v_4=\e |z_{ij}|^4=\e |w_{kl}|^4, \quad 1
  \leq i, k \leq p, ~ 1 \leq j \leq n, ~1 \leq l \leq T.\]
  Then for
  any population spike $a_i$ satisfying $|a_i-\gamma|>\gamma
  \sqrt{c+y-cy}$, the normalised $n_i$-packed outlier eigenvalues of
  $S_2^{-1}S_1$:
  $\sqrt p ~\{l_{p,j}-\phi(a_i), j \in J_i\}$ converge weakly to the distribution of the  eigenvalues of the random matrix
  $-U^{*}_iR(\lambda_i)U_i/\Delta(\lambda_i)$. Here,
  \begin{align}\label{del}
    \Delta(\lambda_i)=\frac{(1-a_i-c)(1+a_i(y-1))^2}{(a_i-1)(-1+2a_i+c+a_i^2(y-1))}~,
  \end{align}
  $R(\lambda_i)=(R_{mn})$
  is a $M \times M$ symmetric random matrix, made  with independent Gaussian entries of  mean zero and variance
  \begin{align}\label{varrij}
    \var (R_{mn})=\left\{\begin{array}{ll}
        2 \theta_i +(v_4-3)\omega_i~,& m=n~,\\
        \theta_i~, & m\neq n~,
      \end{array}\right.
  \end{align}
  where
  \begin{align}
    &\omega_i=\frac{a_i^2(a_i+c-1)^2(c+y)}{(a_i-1)^2}~,\\
    &\theta_i=\frac{a_i^2(a_i+c-1)^2(cy-c-y)}{-1+2a_i+c+a_i^2(y-1)}~.
  \end{align}
\end{theorem}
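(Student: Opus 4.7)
The plan is to upgrade the analysis of Theorem \ref{mainth1} by retaining second-order information in its determinantal equation \eqref{maine}. Denote by $M_p(l)$ the $M\times M$ matrix inside the determinant of \eqref{maine}. For $j\in J_i$, substitute $l_{p,j}=\phi(a_i)+\xi_{p,j}/\sqrt p$ and expand
\[
M_p(l_{p,j}) \;=\; M_0(\lambda_i) \;+\; \frac{1}{\sqrt p}\Bigl(\xi_{p,j}\, M_0'(\lambda_i) + R_p(\lambda_i)\Bigr) \;+\; o_P\!\Bigl(\tfrac{1}{\sqrt p}\Bigr),
\]
where $M_0(\lambda_i)$ is the almost-sure limit computed in Step~1 of the proof of Theorem \ref{mainth1}, and $R_p(\lambda_i)$ collects the centred fluctuations of the four terms (I)--(IV) of \eqref{maine}. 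Conjugating by $U$, the deterministic part $U^*M_0(\lambda_i)U$ becomes diagonal with entries $\lambda_i(1+y\lambda_i s(\lambda_i))+(-1+c+c\lambda_i s(\lambda_i))a_j$, which by \eqref{st1} vanish exactly on the $n_i$-dimensional $U_i$-block and remain nonsingular on its orthogonal complement (the phase transition condition $|a_i-\ga|>\ga\sqrt{c+y-cy}$ guarantees strict separation). A Schur complement along this block splitting reduces the outlier equation $\det(M_p(l_{p,j}))=0$, at leading order, to
\[
\det\!\Bigl(\xi_{p,j}\,\Delta(\lambda_i)\, I_{n_i} \;+\; U_i^* R_p(\lambda_i) U_i \;+\; o_P(1)\Bigr)=0,
\]
because the off-diagonal blocks are of order $1/\sqrt p$ while the complementary diagonal block stays bounded away from singularity. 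The constant $\Delta(\lambda_i)$ is the common diagonal value of $U^*M_0'(\lambda_i)U$ on the $U_i$-block; differentiating $\lambda(1+y\lambda s(\lambda))+(-1+c+c\lambda s(\lambda))a_i$ in $\lambda$ using the derivatives built from \eqref{smm} and simplifying with \eqref{st1} and $\lambda_i=\phi(a_i)$ should yield the closed form \eqref{del}.

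The second step is to identify $\sqrt p\, R_p(\lambda_i)$ as a Gaussian matrix $R(\lambda_i)$ and compute its covariance. Terms (I) and (II) contribute centred quadratic forms $\tfrac{1}{n}Z_1 K_Z Z_1^*-\tfrac{1}{n}\e\tr K_Z\cdot I_M$ and $\tfrac{1}{T}X_1 K_X X_1^*-\tfrac{1}{T}\e\tr K_X\cdot U\diag(a_1,\dots,a_k)U^*$, where the kernels $K_Z,K_X$ are functions of $(Z_2,X_2)$ and hence independent of $(Z_1,X_1)$. Terms (III) and (IV) contribute bilinear forms $\tfrac{1}{\sqrt{nT}} Z_1 K_{ZX} X_1^*$ in the independent spike blocks. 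Conditioning on $(Z_2,X_2)$ and applying the classical quadratic/bilinear CLT for i.i.d.\ entries (as used in \cite{BaiYao08} and \cite{BaiYao12}), each entry of $\sqrt p\,R_p(\lambda_i)$ is asymptotically Gaussian, with limiting covariance determined by $\tfrac{1}{p}\tr K_{\bullet}^2$ and, only on the diagonal, by the fourth-cumulant correction $(v_4-3)\cdot\tfrac{1}{p}\sum_k (K_{\bullet})_{kk}^2$. These traces reduce, via the LSD $F_{c,y}$, to integrals of $\lambda_i$ against the kernels $m_1, m_3, m_4$ defined in \eqref{smm}; combining the four contributions gives the stated variance $\theta_i$ off the diagonal and $2\theta_i+(v_4-3)\omega_i$ on the diagonal.

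The main obstacle is this variance computation. The terms (I)--(IV) are not independent since $(Z_1, X_1)$ and $(Z_2, X_2)$ appear jointly through the cross-terms (III)--(IV); one must carefully check that, under the assumed matching of the first four moments of $(z_{ij})$ and $(w_{kl})$, the fourth-cumulant corrections collapse into a single coefficient $v_4-3$ in front of $\omega_i$, and that the Gaussian part from (I)+(II) combines additively with the contribution of (III)+(IV) to produce the common factor $\theta_i$ via the identity $\omega_i-\theta_i=$ (something computable from $m_1,m_3,m_4$ at $\lambda_i$). The final algebraic reduction of $m_1(\lambda_i), m_3(\lambda_i), m_4(\lambda_i)$ to the rational forms of $\theta_i$, $\omega_i$ and $\Delta(\lambda_i)$ relies on the master identity \eqref{st1} for $s(\lambda_i)$ and on $\lambda_i = \phi(a_i)$, which together collapse many Stieltjes-transform derivatives into closed-form functions of $a_i$ alone. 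Once $R(\lambda_i)$ is identified, continuity of the eigenvalues of $U_i^* R(\lambda_i) U_i$ as functions of its entries, together with $\Delta(\lambda_i)\neq 0$, transfers the convergence of the above characteristic polynomial to the joint weak convergence of $\{\sqrt p\,(l_{p,j}-\phi(a_i)):j\in J_i\}$ to the eigenvalues of $-U_i^* R(\lambda_i) U_i/\Delta(\lambda_i)$, which is the claim.
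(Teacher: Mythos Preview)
Your proposal is correct and follows the same overall route as the paper. One organizational difference is worth flagging: rather than treating the four fluctuation terms (I)--(IV) as separate quadratic and bilinear forms in $Z_1$ and $X_1$ and then combining their covariances at the end, the paper writes $X_1=\Omega_M^{1/2}S_1$ with $S_1$ having i.i.d.\ standardized entries and observes that $[U^*R_n(\lambda_i)U]_i$ is a \emph{single} centred quadratic form $\xi A_n(\lambda_i)\xi^*-\E[\cdot]$ in the stacked $M\times(n+T)$ table $\xi=(Z_1\ \,S_1)$; this table is an i.i.d.\ array precisely because the first four moments of the $z$'s and $w$'s coincide. Proposition~3.1 of \cite{BaiYao08} then gives $\theta_i=\lim\frac{1}{n+T}\tr A_n^2$ and $\omega_i=\lim\frac{1}{n+T}\sum_j A_n(j,j)^2$ directly, with no cross-term bookkeeping between the four pieces, and this packaging is also what makes the role of the moment-matching hypothesis transparent. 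Two further technical points the paper handles explicitly that you leave implicit: Skorokhod's strong representation is invoked to upgrade the weak convergence of $R_n$ to almost-sure convergence before reading eigenvalues off the vanishing determinant (your continuity remark needs this), and the computation of $\omega_i$ is carried out by showing that each individual diagonal entry of the kernels $A(\lambda_i)$ and $B(\lambda_i)$ converges to a constant independent of its index, via a Sherman--Morrison leave-one-out identity together with an auxiliary limit for $\frac{1}{p}\tr\bigl(\lambda_i\frac{1}{n}Z_2Z_2^*-\frac{1}{T}X_2X_2^*\bigr)^{-1}$.
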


\noindent Numerical illustrations of this theorem are detailed in the next section.

\begin{remark}
Notice that the result above involves the $i$-th block $U_i$  of the
eigen-matrix $U$. When the spike $a_i$ is simple, $U_i$ is unique up
to its sign, then  $U^{*}_iR(\lambda_i)U_i$ is uniquely
determined. But when $a_i$ has multiplicities greater than 1, $U_i$ is
not unique; actually, any rotation of $U_i$ can be an eigenvector
matrix  corresponding to $a_i$. Therefore, Lemma \ref{unique} in the Appendix states that, such a rotation will not affect  the eigenvalues of the matrix $U^{*}_iR(\lambda_i)U_i$.
\end{remark}
\begin{proof}(proof of Theorem \ref{mainth2})

  \noindent{\bf Step 1: Convergence to the eigenvalues of the random matrix
    $-U^{*}_iR(\lambda_i)U_i/\Delta(\lambda_i)$.}
  We start from \eqref{maine}. First we make some short hands.
  Define
  \begin{align}\label{abcd}
    A(\lambda)&=I_n-Z^*_2\bigg[\lambda I_p-\Big(\frac 1n Z_2Z^*_2\Big)^{-1}\frac 1T X_2X^*_2\bigg]^{-1}\Big(\frac 1n Z_2Z^*_2\Big)^{-1}\frac{\lambda}{n}Z_2~,\nonumber\\
    B(\lambda)&=I_T+X^*_2\bigg[\lambda I_p-\Big(\frac 1n Z_2Z^*_2\Big)^{-1}\frac 1T X_2X^*_2\bigg]^{-1}\Big(\frac 1n Z_2Z^*_2\Big)^{-1}\frac{1}{T}X_2~,\nonumber\\
    C(\lambda)&=Z^*_2\bigg[\lambda I_p-\Big(\frac 1n Z_2Z^*_2\Big)^{-1}\frac 1T X_2X^*_2\bigg]^{-1}\Big(\frac 1n Z_2Z^*_2\Big)^{-1}\frac{1}{T}X_2~,\nonumber\\
    D(\lambda)&=X^*_2\bigg[\lambda I_p-\Big(\frac 1n Z_2Z^*_2\Big)^{-1}\frac 1T X_2X^*_2\bigg]^{-1}\Big(\frac 1n Z_2Z^*_2\Big)^{-1}\frac{1}{n}Z_2~,
  \end{align}
  then \eqref{maine} could be written as
  \begin{align}\label{m1}
    \det \bigg(\underbrace{\frac {l_{p,j}}{n} Z_1A(l_{p,j})Z^*_1}_{(\romannumeral1)}-\underbrace{\frac 1TX_1B(l_{p,j})X^*_1}_{(\romannumeral2)}+\underbrace{\frac {l_{p,j}}{n} Z_1C(l_{p,j})X^*_1}_{(\romannumeral3)}+\underbrace{\frac {l_{p,j}}{T} X_1D(l_{p,j})Z^*_1}_{(\romannumeral4)}\bigg)=0~.
  \end{align}
  The remaining is to find second order approximation of the four terms on the left hand side of \eqref{m1}.

  Using Lemma \ref{asl} in the appendix, we have
  \begin{align}\label{p1}
    (\romannumeral1)&=\e \frac{\lambda_i}{n} Z_1A(\lambda_i)Z^*_1+\frac {l_{p,j}}n Z_1A(l_{p,j})Z^*_1-\e \frac{\lambda_i}{n} Z_1A(\lambda_i)Z^*_1\nonumber\\
    &=(\lambda_i+y\lambda_i^2s(\lambda_i))\cdot I_M+ \frac{l_{p,j}}{n} Z_1A(l_{p,j})Z^*_1- \frac{\lambda_i}{n} Z_1A(\lambda_i)Z^*_1+\frac{\lambda_i}{n} Z_1A(\lambda_i)Z^*_1-\e \frac{\lambda_i}{n} Z_1A(\lambda_i)Z^*_1\nonumber\\
    &=(\lambda_i+y\lambda_i^2s(\lambda_i))\cdot I_M+ \frac{l_{p,j}-\lambda_i}{n} Z_1A(l_{p,j})Z^*_1+\frac{\lambda_i}{n} Z_1\left(A(l_{p,j})-A(\lambda_i)\right)Z^*_1\nonumber\\
    &\quad +\frac{\lambda_i}{\sqrt n}\Big[\frac{1}{\sqrt n}Z_1A(\lambda_i)Z^*_1-\e \frac{1}{\sqrt n}Z_1A(\lambda_i)Z^*_1\Big]\nonumber\\
    &\rightarrow (\lambda_i+y\lambda_i^2s(\lambda_i))\cdot I_M+(l_{p,j}-\lambda_i)\cdot(1+2y\lambda_i s(\lambda_i)+\lambda_i^2 y m_1(\lambda_i))\cdot I_M\nonumber\\
    &\quad +\frac{\lambda_i}{\sqrt n}\Big[\frac{1}{\sqrt n}Z_1A(\lambda_i)Z^*_1-\e \frac{1}{\sqrt n}Z_1A(\lambda_i)Z^*_1\Big]~,
  \end{align}

  \begin{align}\label{p2}
    (\romannumeral2)&=\e \frac 1TX_1B(\lambda_i)X^*_1+\frac 1TX_1B(l_{p,j})X^*_1-\e \frac 1TX_1B(\lambda_i)X^*_1\nonumber\\
    &=U\big(1-c-c\lambda_i s(\lambda_i)\big)\cdot \left(\begin{array}{c}
        a_1 \qquad   \\
        ~ \ddots ~ \\
        \qquad a_k \\
      \end{array}\right)U^{*}+\frac 1TX_1(B(l_{p,j})-B(\lambda_i))X^*_1\nonumber\\
    &\quad +\frac {1}{\sqrt T}\Big[\frac{1}{\sqrt T}X_1B(\lambda_i)X^*_1-\e \frac{1}{\sqrt T}X_1B(\lambda_i)X^*_1\Big]\nonumber\\
    &\rightarrow U\big(1-c-c\lambda_i s(\lambda_i)\big)\cdot \left(\begin{array}{c}
        a_1 \qquad   \\
        ~ \ddots ~  \\
        \qquad a_k \\
      \end{array}\right)U^{*}-U(l_{p,j}-\lambda_i)\cdot cm_3(\lambda_i)\cdot \left(\begin{array}{c}
        a_1 \qquad   \\
        ~ \ddots ~ \\
        \qquad a_k \\
      \end{array}\right)U^{*}\nonumber\\
    &\quad +\frac {1}{\sqrt T}\Big[\frac{1}{\sqrt T}X_1B(\lambda_i)X^*_1-\e \frac{1}{\sqrt T}X_1B(\lambda_i)X^*_1\Big]~,
  \end{align}

  \begin{align}\label{p3}
    (\romannumeral3)&=\frac {l_{p,j}}{n} Z_1C(l_{p,j})X^*_1-\e \frac {\lambda_i}{n} Z_1C(\lambda_i)X^*_1\nonumber\\
    &=\frac {l_{p,j}}{n} Z_1C(l_{p,j})X^*_1-\frac {\lambda_i}{n} Z_1C(\lambda_i)X^*_1+\frac {\lambda_i}{n} Z_1C(\lambda_i)X^*_1-\e \frac {\lambda_i}{n} Z_1C(\lambda_i)X^*_1\nonumber\\
    &=\frac {l_{p,j}}{n} Z_1(C(l_{p,j})-C(\lambda_i))X^*_1+\frac {l_{p,j}-\lambda_i}{n} Z_1C(\lambda_i)X^*_1+\frac{\lambda_i}{n}\cdot\Big[ Z_1C(\lambda_i)X^*_1-\e Z_1C(\lambda_i)X^*_1\Big]\nonumber\\
    &\rightarrow\frac{\lambda_i}{n}\cdot\Big[Z_1C(\lambda_i)X^*_1-\e Z_1C(\lambda_i)X^*_1\Big]~,
  \end{align}

  \begin{align}\label{p4}
    (\romannumeral4)&=\frac {l_{p,j}}{T} X_1D(l_{p,j})Z^*_1-\e \frac {\lambda_i}{T} X_1D(\lambda_i)Z^*_1\nonumber\\
    &=\frac {l_{p,j}}{T} X_1D(l_{p,j})Z^*_1-\frac {\lambda_i}{T} X_1D(\lambda_i)Z^*_1+\frac {\lambda_i}{T} X_1D(\lambda_i)Z^*_1-\e \frac {\lambda_i}{T} X_1D(\lambda_i)Z^*_1\nonumber\\
    &=\frac {l_{p,j}}{T} X_1(D(l_{p,j})-D(\lambda_i))Z^*_1+\frac {l_{p,j}-\lambda_i}{T} X_1D(\lambda_i)Z^*_1+\frac {\lambda_i}{T}\cdot \Big[X_1D(\lambda_i)Z^*_1-\e X_1D(\lambda_i)Z^*_1\Big]\nonumber\\
    &\rightarrow \frac{\lambda_i}{T}\cdot \Big[X_1D(\lambda_i)Z^*_1-\e X_1D(\lambda_i)Z^*_1\Big]~.
  \end{align}
  Denote
  \begin{align}
    R_n(\lambda_i)
    &= \lambda_i \sqrt{\frac p n} \bigg[\frac{1}{\sqrt n}Z_1A(\lambda_i)Z^*_1\bigg]-\sqrt{\frac p T} \bigg[\frac{1}{\sqrt T}X_1B(\lambda_i)X^*_1\bigg]
    +\lambda_i\sqrt{\frac p n}\bigg[\frac {1}{\sqrt n}Z_1C(\lambda_i)X^*_1\bigg]\nonumber\\
    &\quad + \lambda_i \sqrt{\frac p T}\bigg[\frac{1}{\sqrt T}X_1D(\lambda_i)Z^*_1\bigg]-\e [\cdot]~,\label{rnl}
  \end{align}
  where $\e[\cdot]$ denotes the total expectation of all the preceding
  terms in the equation,
  and
  \begin{align*}
    \Delta(\lambda_i)&=1+2y\lambda_i s(\lambda_i)+\lambda_i^2 y m_1(\lambda_i)+acm_3(\lambda_i)~.\\
  \end{align*}
  Combining \eqref{m1}, \eqref{p1}, \eqref{p2}, \eqref{p3}, \eqref{p4} and considering the diagonal block that corresponds to the row and column index in $J_i \times J_i$ leads to:
  \begin{align}\label{final}
    \Big|\sqrt p (l_{p,j}-\lambda_i)\cdot \Delta(\lambda_i)\cdot I_{n_i}+[U^{*}R_n(\lambda_i)U]_i\Big|\rightarrow 0~.
  \end{align}
  Furthermore,
  it will be established in Step 2 below that
  \begin{equation}
    \label{eq:covRn}
    [U^{*}R_n(\lambda_i)U]_i  \longrightarrow [U^{*}R(\lambda_i)U]_i  \quad \text{in distribution},
  \end{equation}
  for some random matrix $R(\lambda_i)$. Using the device of Skorokhod
  strong representation \citep{skoro56,HuBai14}, we may assume
  that this convergence hold almost surely by considering an enlarged
  probability space. Under this device,  \eqref{final} is equivalent to say that
  $
  \sqrt p (l_{p,j}-\lambda_i)
  $
  tends to an eigenvalue of the matrix $-[U^{*}R(\lambda_i)U]_i/\Delta(\lambda_i)(=-U^{*}_iR(\lambda_i)U_i/\Delta(\lambda_i))$. Finally, as the index $j$ is arbitrary over the set $J_i$, all the $n_i$ random variables
  \begin{align*}
    \left\{\sqrt p (l_{p,j}-\lambda_i), j \in J_i\right\}
  \end{align*}
  converge almost surely to the set of  eigenvalues of the random matrix $-U^{*}_iR(\lambda_i)U_i/\Delta(\lambda_i)$.
  Besides, due to Lemma \ref{five}, we have
  \begin{align*}
    \Delta(\lambda_i)&=1+2y\lambda_i s(\lambda_i)+\lambda_i^2 y m_1(\lambda_i)+acm_3(\lambda_i)\\
    &=\frac{(1-a_i-c)(1+a_i(y-1))^2}{(a_i-1)(-1+2a_i+c+a_i^2(y-1))}~.
  \end{align*}

  \noindent{\bf Step 2: Proof of the convergence \eqref{eq:covRn} and
    structure of the random matrix $R(\lambda_i)$.}
  In the second step, we aim to find the matrix limit of the block random matrix $[U^{*}R_n(\lambda_i)U]_i$. First, we show $[U^{*}R_n(\lambda_i)U]_i$  equals to another random matrix $[U^{*}\tilde{R}_n(\lambda_i)U]_i$, here $\tilde{R}_n(\lambda_i)$ is the type of random sesquilinear form. Then  using the results in \cite{BaiYao08} (Proposition 3.1 and Remark 1), we are able to find the matrix limit of $\tilde{R}_n(\lambda_i)$.

  \noindent By assumption (b) that $x_i=\Sigma^{1/2}_ps_i$, we have its first $M$ components
  \begin{align*}
    X_1=\Omega^{1/2}_pS_1=U\left(\begin{array}{c}
        \sqrt{a_1} \qquad\\
        ~\ddots~\\
        \qquad \sqrt{a_k}
      \end{array}\right)U^{*}S_1~.
  \end{align*}
  Recall the definition of $R_n(\lambda_i)$ in \eqref{rnl}, we have
  \begin{align}\label{ff}
    &\quad~ U^{*}R_n(\lambda_i)U\nonumber\\
    &=U^{*}\frac {\sqrt p \lambda_i}{ n}Z_{1}A(\lambda_i)Z^*_{1}U- \frac {\sqrt p}{T}\left(\begin{array}{c}
        \sqrt{a_1} \qquad\\
        ~\ddots~\\
        \qquad \sqrt{a_k}
      \end{array}\right)U^{*}S_1B(\lambda_i)S^*_{1}U\left(\begin{array}{c}
        \sqrt{a_1} \qquad\\
        ~\ddots~\\
        \qquad \sqrt{a_k}
      \end{array}\right)\nonumber\\
    &\quad+U^{*}\frac {\sqrt p \lambda_i}{n}Z_1C(\lambda_i)S^*_1U\left(\begin{array}{c}
        \sqrt{a_1} \qquad\\
        ~\ddots~\\
        \qquad \sqrt{a_k}
      \end{array}\right)+\frac{\lambda_i \sqrt p } { T}\left(\begin{array}{c}
        \sqrt{a_1} \qquad\\
        ~\ddots~\\
        \qquad \sqrt{a_k}
      \end{array}\right)U^{*}S_1D(\lambda_i)Z^*_1U\nonumber\\
    &\quad -\e [\cdot]~.
  \end{align}
  Therefore, if we consider its $i$-th block that corresponds to the row and column index in the set $J_i \times J_i$:
  \begin{align}
    &\quad ~ [U^{*}R_n(\lambda_i)U]_i\nonumber\\
    &= \lambda_i\sqrt{\frac p n}\Big[\frac {1}{\sqrt n}U^{*}Z_{1}A(\lambda_i)Z^*_{1}U\Big]_i-  a_i \sqrt{\frac p T} \Big[\frac {1}{\sqrt T}U^{*}S_1B(\lambda_i)S^*_{1}U\Big]_i\nonumber\\
    & \quad+\lambda_i\sqrt {a_i}\sqrt{\frac p n} \Big[\frac {1}{\sqrt n}U^{*}Z_1C(\lambda_i)S^*_1U\Big]_i+\lambda_i \sqrt {a_i}\sqrt{\frac p T}\Big[\frac{1} {\sqrt T}U^{*}S_1D(\lambda_i)Z^*_1U\Big]_i\nonumber\\
    &\quad -\e [\cdot]\nonumber\\
    &= \Big[\lambda_i\frac {\sqrt p}{n}U^{*}Z_{1}A(\lambda_i)Z^*_{1}U-  a_i\frac {\sqrt p}{ T}U^{*}S_1B(\lambda_i)S^*_{1}U\nonumber\\
    & \quad+\lambda_i\sqrt {a_i} \frac {\sqrt p}{ n}U^{*}Z_1C(\lambda_i)S^*_1U+\lambda_i \sqrt {a_i}\frac{\sqrt p} { T}U^{*}S_1D(\lambda_i)Z^*_1U\Big]_i\nonumber\\
    &\quad -\e [\cdot]\nonumber\\
    &=\left[U^{*}\begin{pmatrix}
        Z_1 & S_1 \\
      \end{pmatrix}\left(\begin{array}{cc}
          \frac{\lambda_i \sqrt p A(\lambda_i)}{ n} & \frac{\lambda_i  \sqrt {a_ip} C(\lambda_i)}{ n}\\[2mm]
          \frac{\lambda_i \sqrt {a_ip} D(\lambda_i)}{ T} & \frac{-a_i \sqrt p B(\lambda_i)}{T}
        \end{array}\right) \begin{pmatrix}
        Z^*_1 \\[3mm]
        S^*_1 \\
      \end{pmatrix}U-\E [\cdot]\right]_i\nonumber\\
    &:=[U^{*}\tilde{R}_n(\lambda_i)U]_i\nonumber\\
    &=U^{*}_i\tilde{R}_n(\lambda_i)U_i~,
  \end{align}
  where
  \begin{align*}
    \tilde{R}_n(\lambda_i):=\begin{pmatrix}
      Z_1 & S_1 \\
    \end{pmatrix}\left(\begin{array}{cc}
        \frac{\lambda_i \sqrt p A(\lambda_i)}{ n} & \frac{\lambda_i  \sqrt {a_ip} C(\lambda_i)}{ n}\\[2mm]
        \frac{\lambda_i \sqrt {a_ip} D(\lambda_i)}{ T} & \frac{-a_i \sqrt p B(\lambda_i)}{T}
      \end{array}\right) \begin{pmatrix}
      Z^*_1 \\[3mm]
      S^*_1 \\
    \end{pmatrix}-\E [\cdot]~.
  \end{align*}
  Finally, using Lemma \ref{lemm3} in the appendix leads to the result.
  The proof of Theorem \ref{mainth2} is complete.
\end{proof}

\noindent Next we consider a special case  where  $\Omega_p$ is diagonal, whose eigenvalues being all simple. In other words, we have $M=k$ and $n_i=1$ for all $1\leq i\leq M$. Hence $U=I_M$.
Following  Theorem \ref{mainth2}, we can derive the asymptotic normality  for the normalised outlier eigenvalues of $S^{-1}_2S_1$ when $|a_i-\gamma|>\gamma \sqrt{c+y-cy}$.
\begin{proposition}\label{sim}
  Under the same assumptions as in Theorem \ref{mainth1}, with additional conditions that
  $\Omega_p$ is diagonal and all its eigenvalues $a_i$ ($1\leq i \leq M$) are simple,
  we have when  $|a_i-\gamma|>\gamma \sqrt{c+y-cy}$, the outlier eigenvalue  $l_i$ of $S_2^{-1}S_1$ is asymptotically Gaussian:
  \begin{align*}
    \sqrt p ~\left(l_{i}-\frac{a_i(a_i-1+c)}{a_i-1-a_iy}\right)\Longrightarrow N(0, \sigma^2_i)~,
  \end{align*}
  where
  \begin{align*}
    \sigma^2_i&=\frac{2a^2_i(cy-c-y)(a_i-1)^2(-1+2a_i+c+a^2_i(y-1))}{(1+a_i(y-1))^4}\\
    &\quad+(v_4-3)\cdot\frac{a^2_i(c+y)(-1+2a_i+c+a^2_i(y-1))^2}{(1+a_i(y-1))^4}~.
  \end{align*}
\end{proposition}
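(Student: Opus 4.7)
The plan is to deduce Proposition \ref{sim} directly from Theorem \ref{mainth2} by specialising to the simple-spike configuration. Under the additional hypotheses of the proposition, $\Omega_p$ is already diagonal and every spike has multiplicity $n_i = 1$, so one may take $U = I_M$ and each block $U_i$ becomes the $i$-th canonical vector $e_i \in \R^M$. Theorem \ref{mainth2} then asserts that $\sqrt{p}(l_i - \phi(a_i))$ converges in distribution to the single eigenvalue of the $1 \times 1$ random matrix
\[
-\frac{U_i^{*} R(\lambda_i) U_i}{\Delta(\lambda_i)} = -\frac{R_{ii}(\lambda_i)}{\Delta(\lambda_i)},
\]
which is simply the scalar $-R_{ii}(\lambda_i)/\Delta(\lambda_i)$. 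Because $R(\lambda_i)$ is symmetric with Gaussian entries, its diagonal entry $R_{ii}(\lambda_i)$ is itself centered Gaussian, and the diagonal case of \eqref{varrij} yields
\[
\var(R_{ii}(\lambda_i)) = 2\theta_i + (v_4-3)\omega_i.
\]
Consequently $\sqrt{p}(l_i - \phi(a_i))$ is asymptotically Gaussian with mean zero and variance $\sigma_i^2 = (2\theta_i + (v_4-3)\omega_i)/\Delta(\lambda_i)^2$.

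The remaining work is purely algebraic: substitute the explicit formulas for $\theta_i$, $\omega_i$ and $\Delta(\lambda_i)$ given in Theorem \ref{mainth2} and simplify. The common factor $(a_i+c-1)^2 = (1-a_i-c)^2$ appearing in $\theta_i$, $\omega_i$ and $\Delta(\lambda_i)^2$ cancels. For the $\theta_i$ contribution, one copy of $-1+2a_i+c+a_i^2(y-1)$ from the denominator of $\theta_i$ cancels against one factor in the squared quantity of $1/\Delta(\lambda_i)^2$, producing the first summand $2a_i^2(cy-c-y)(a_i-1)^2(-1+2a_i+c+a_i^2(y-1))/(1+a_i(y-1))^4$. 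For the $\omega_i$ contribution, the $(a_i-1)^2$ in the denominator of $\omega_i$ cancels the $(a_i-1)^2$ carried by $1/\Delta(\lambda_i)^2$, producing the second summand $(v_4-3)\, a_i^2(c+y)(-1+2a_i+c+a_i^2(y-1))^2/(1+a_i(y-1))^4$. These match exactly the formula stated for $\sigma_i^2$.

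There is no substantive obstacle, since all of the random matrix content is already encoded in Theorem \ref{mainth2}. The only conceptual point that must be emphasised is that, with $n_i = 1$, the limiting variance in \eqref{varrij} is the diagonal one $2\theta_i + (v_4-3)\omega_i$ rather than the off-diagonal $\theta_i$; this is precisely what produces the fourth-moment correction $(v_4-3)\omega_i$ appearing in $\sigma_i^2$. Everything else is routine simplification.
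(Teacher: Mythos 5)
Your proposal is correct and follows essentially the same route as the paper's own proof: specialise Theorem \ref{mainth2} with $U=I_M$ and $n_i=1$ so the limiting object is the scalar $-R_{ii}(\lambda_i)/\Delta(\lambda_i)$, read off its variance from the diagonal case of \eqref{varrij}, divide by $\Delta(\lambda_i)^2$, and simplify. The algebraic cancellations you identify (the $(a_i+c-1)^2$ factor, one copy of $-1+2a_i+c+a_i^2(y-1)$ against $\theta_i$, and $(a_i-1)^2$ against $\omega_i$) are exactly the ones needed, and the resulting expression matches the stated $\sigma_i^2$.
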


\begin{remark}
  Notice that when the data are standard Gaussian, we have $v_4=3$, then the above theorem reduces to
  \begin{align*}
    &\sqrt p \left(l_{i}-\frac{a_i(a_i-1+c)}{a_i-1-a_iy}\right)\\
    &\Longrightarrow N\left(0,\frac{2a_i^2(a_i-1)^2(cy-c-y)(-1+2a_i+c+a_i^2(y-1))}{(1+a_i(y-1))^4}\right)~,
  \end{align*}
  which is exactly the result in \cite{DhaJoh14}, see setting 1 in their Proposition 11.
\end{remark}

\begin{proof}(of Proposition \ref{sim})\quad  Under the above assumptions, the random matrix $-[U^{*}R(\lambda_i)U]_i$ reduces to $-[R(\lambda_i)]_i$. And since all the $n_i=1$, we have $-[R(\lambda_i)]_i$ equals the $(i,i)$-th element of $-R(\lambda_i)$,  which is a  Gaussian random variable with  mean zero and variance
  \begin{align*}
    2 \theta_i +(v_4-3)\omega_i=\frac{2a_i^2(a_i+c-1)^2(cy-c-y)}{-1+2a_i+c+a_i^2(y-1)} +(v_4-3)\cdot \frac{a_i^2(a_i+c-1)^2(c+y)}{(a_i-1)^2}.
  \end{align*}
  Therefore, combining with \eqref{del} we have
  \begin{align*}
    \sqrt p \left(l_{i}-\frac{a_i(a_i-1+c)}{a_i-1-a_iy}\right) \Longrightarrow N(0,\sigma^2_i)~,
  \end{align*}
  where
  \begin{align*}
    \sigma^2_i&=\frac{2a^2_i(cy-c-y)(a_i-1)^2(-1+2a_i+c+a^2_i(y-1))}{(1+a_i(y-1))^4}\\
    &\quad+(v_4-3)\cdot\frac{a^2_i(c+y)(-1+2a_i+c+a^2_i(y-1))^2}{(1+a_i(y-1))^4}~.
  \end{align*}

  \noindent The proof of Proposition \ref{sim} is complete.

\end{proof}



\section{Numerical illustrations}\label{ni}
In this section, numerical results are provided to illustrate the results of our Theorem \ref{mainth2} and Proposition \ref{sim}.
 We fix  $p=200$, $T=1000$, $n=400$ with 1000 replications, thus $y=1/2$ and $c=1/5$. The critical interval is then $[\gamma-\gamma\sqrt{c+y-cy}, \gamma+\gamma\sqrt{c+y-cy}]=[0.45,3.55]$ and the limiting support $[b_1, b]=[0.2, 12.6]$. Consider $k=3$ spike eigenvalues $(a_1, a_2, a_3)=(20, 0.2, 0.1)$ with respective multiplicity $(n_1, n_2, n_3)=(1, 2, 1)$. Let $l_1\geq \cdots \geq l_p$ be the ordered eigenvalues of the Fisher matrix $S_2^{-1}S_1$. We are particularly interested in the distributions of  $l_1$, $(l_{p-2}, l_{p-1})$ and $l_p$, which  corresponds to the spike eigenvalues $a_1$, $a_2$ and $a_3$, respectively.

\subsection{Case of $U=I_4$}\label{ui}
In this subsection, we consider a simple case that $U=I_4$. Therefore, following Theorem \ref{mainth2}, we have
\begin{itemize}
  \item for $j=1, p$, $\sqrt p \{l_j-\phi (a_i)\}\rightarrow N(0, \sigma^2_i)$.
        Here, for $j=1$, $i=1$, $\phi (a_1)=42.67$ and $\sigma^2_1=4246.8+1103.5(v_4-3)$; and for $j=p$, $i=3$, $\phi (a_3)=0.07$ and $\sigma^2_3=7.2\times10^{-3}+3.15\times 10^{-3}(v_4-3)$.
  \item for $j=p-2, p-1$ and $i=2$, the two dimensional random vector $\sqrt p \{l_j-\phi (a_2)\}$ converges to the eigenvalues of the random matrix $-\frac{R_{mn}}{\Delta(\lambda_2)}$. Here, $\phi (a_2)=0.13$, $\Delta(\lambda_2)=1.45$ and $R_{mn}$ is the $2 \times 2$ symmetric random matrix, made with independent Gaussian entries of mean zero and variance
      \begin{align}\label{rmn}
         \var (R_{mn})=\left\{\begin{array}{lll}
        2 \theta_2 +(v_4-3)\omega_2 &(=0.04+0.016(v_4-3))~,& m=n~,\\
        \theta_2 &(=0.02)~, & m\neq n~,
      \end{array}\right.
  \end{align}
\end{itemize}
Simulations are conducted to compare the distributions of the empirical extreme eigenvalues with their limits.

\subsubsection{Gaussian case}

\begin{figure}[htbp!]
  \centering
  \vspace{1cm}
  \includegraphics[width=0.9\linewidth,height=5cm]{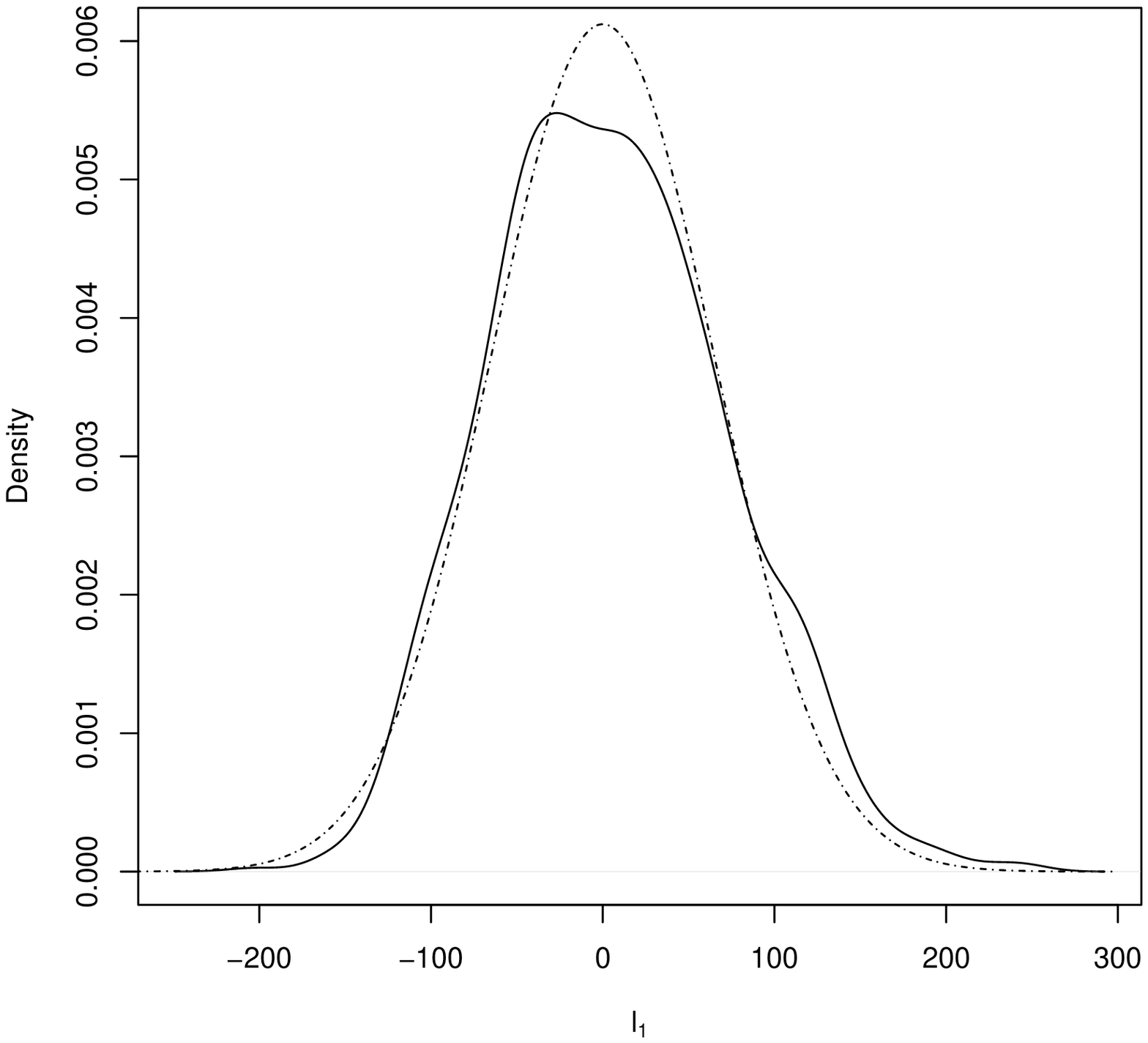}

  \includegraphics[width=0.9\linewidth,height=5cm]{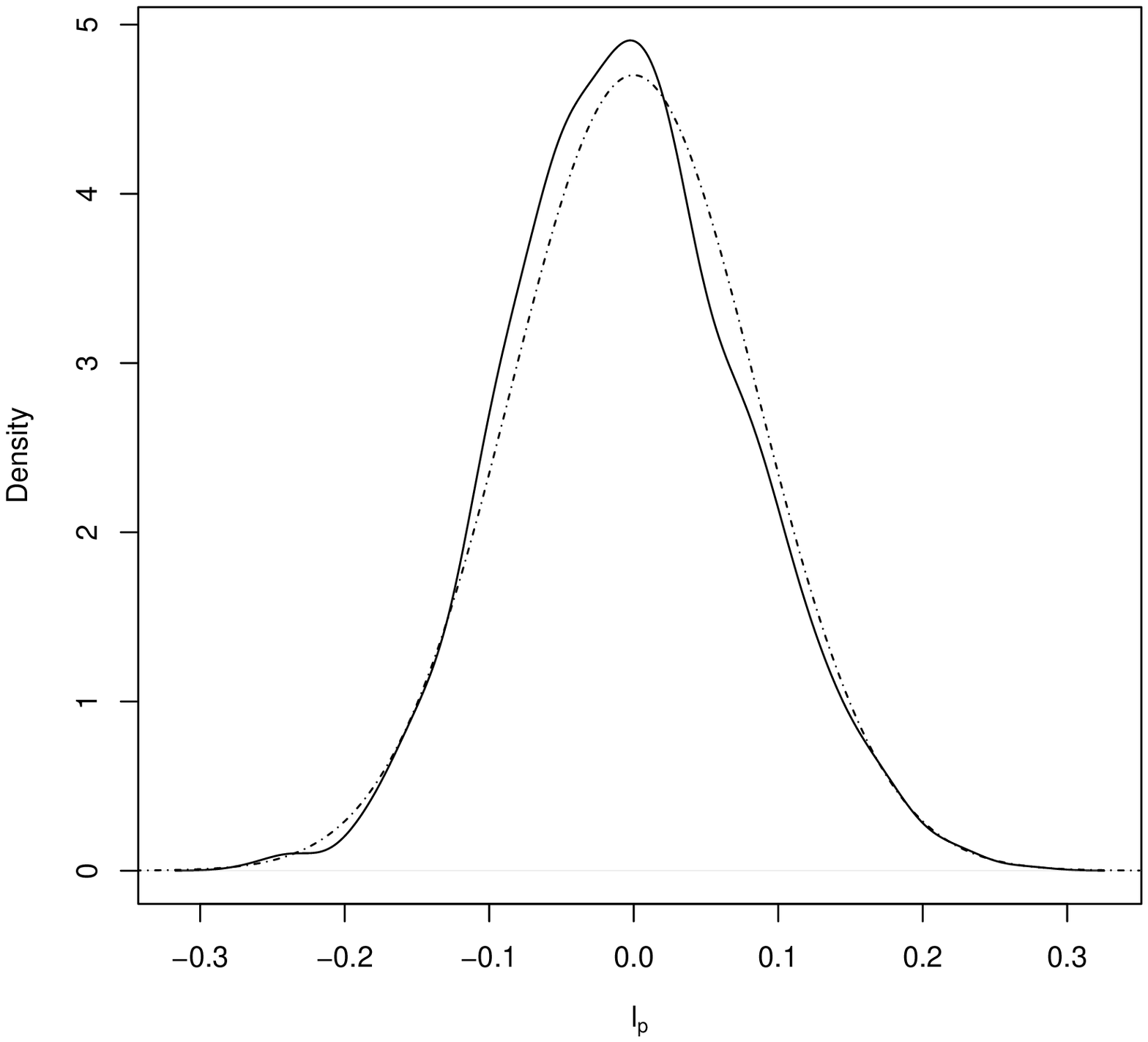}

  \mbox{
  \includegraphics[width=0.5\linewidth]{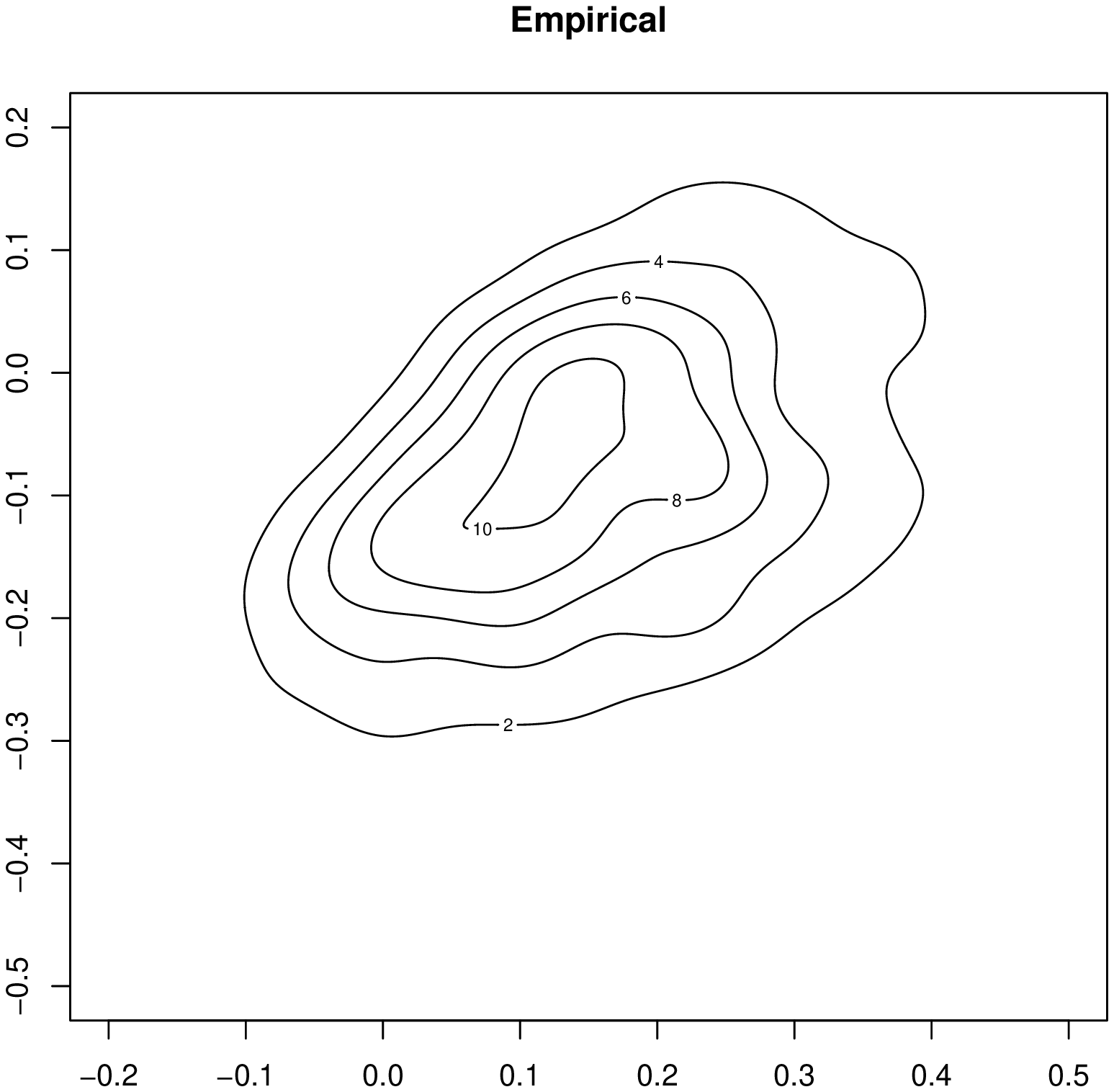}
  \includegraphics[width=0.5\linewidth]{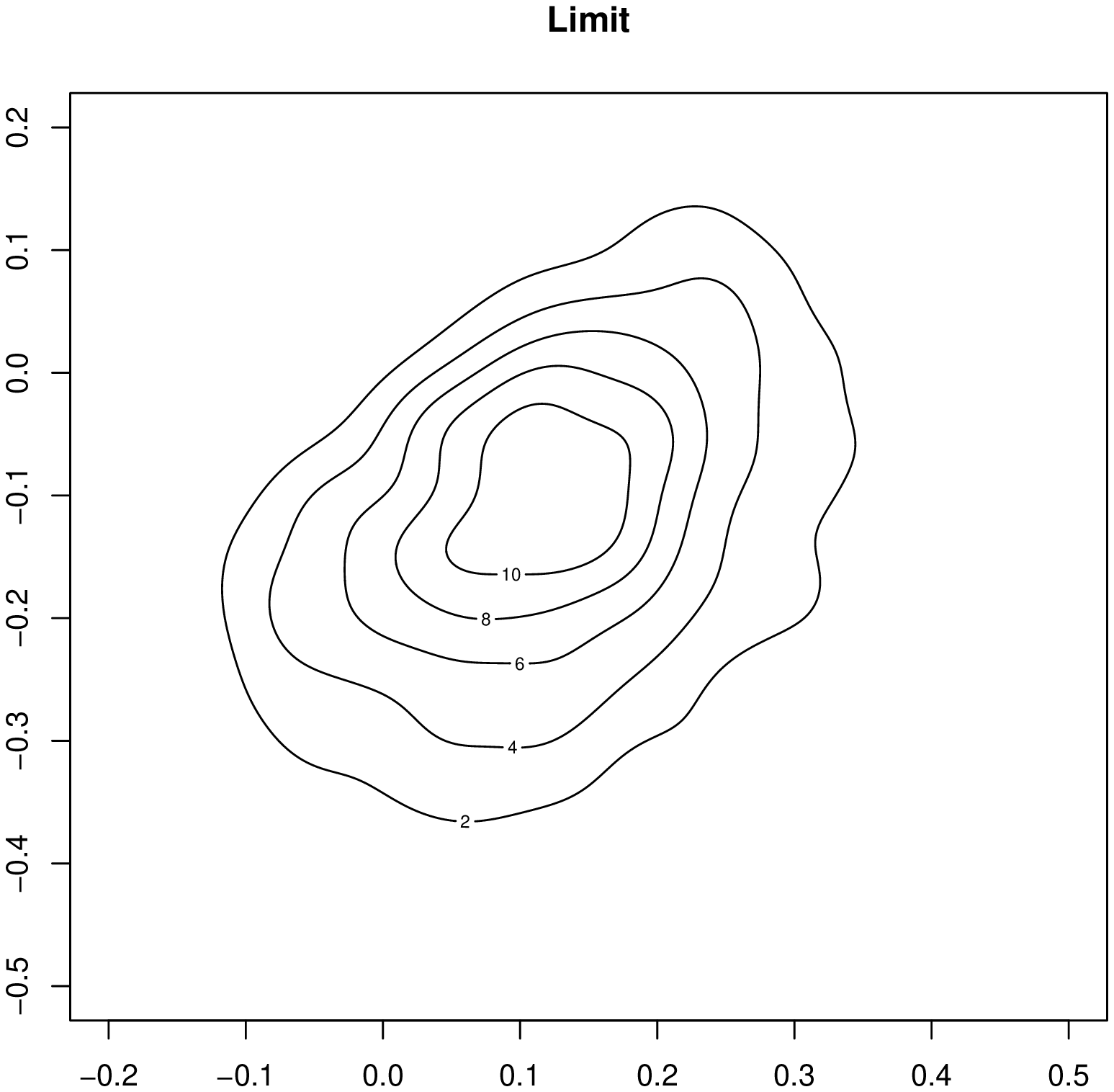}
  }

  \caption{Upper panels show the empirical densities  of $l_1$ and
    $l_p$ (solid lines, after centralisation and scaling)
    compared to their Gaussian limits (dashed lines).
    Lower panels show contour plots of  empirical joint  density
    function  of $(l_{p-2}, l_{p-1})$ (left plot, after centralisation and scaling)
    and contour plots of their limits  (right plot). Both the empirical and limit joint density functions are displayed using the two-dimensional kernel density estimates.
    Samples are from i.i.d. standard Gaussian distribution with
    $U=I_4$ with
    1000 independent replications.
    \label{gaussianextreme}}
\end{figure}

First, we assume all the  $z_{ij}$ and $w_{ij}$ are i.i.d. standard Gaussian, thus $v_4-3=0$. And according to \eqref{rmn}, $R_{mn}/\sqrt{0.04}$ is the standard $2 \times 2$ Gaussian Wigner matrix (GOE). Therefore, we have
 \begin{itemize}
   \item $\sqrt p \{l_1-42.67\}\rightarrow N(0, 4246.8)$~,
   \item $\sqrt p \{l_p-0.07\}\rightarrow N(0, 7.2\times10^{-3})$~,
   \item The two-dimensional random vector
     $  \sqrt p \{l_{p-2}-0.13,l_{p-1}-0.13  \} $
     converges to the eigenvalues of the random matrix $-0.138\cdot W$, here $W$ is a $2 \times 2$ GOE.
 \end{itemize}

 \noindent Figure \ref{gaussianextreme}, upper panels,  show the empirical kernel
 density estimates (in solid lines) of $\sqrt p \{l_1-42.67\}$ and
 $\sqrt p \{l_p-0.07\}$ from 1000 independent replications, compared
 to their Gaussian limits $N(0, 4246.8)$ and $N(0, 7.2\times10^{-3})$,
 respectively (dashed lines). When considering the empirical
 distribution of the two-dimensional
 random vector
 $  \sqrt p \{l_{p-2}-0.13,l_{p-1}-0.13  \} $,
 we run the two-dimensional kernel density estimation from 1000
 independent replications and display their contour lines, see the
 lower-left panel of the  figure, while the lower-right panel plot
 shows  the
 contour lines of the  kernel density estimation of the eigenvalues of
 the $2 \times 2$ random matrix $-0.138\cdot GOE$ (their limits).

\subsubsection{Binary case}

\begin{figure}[htbp!]
  \centering
  \vspace{1cm}
  \includegraphics[width=0.9\linewidth,height=5cm]{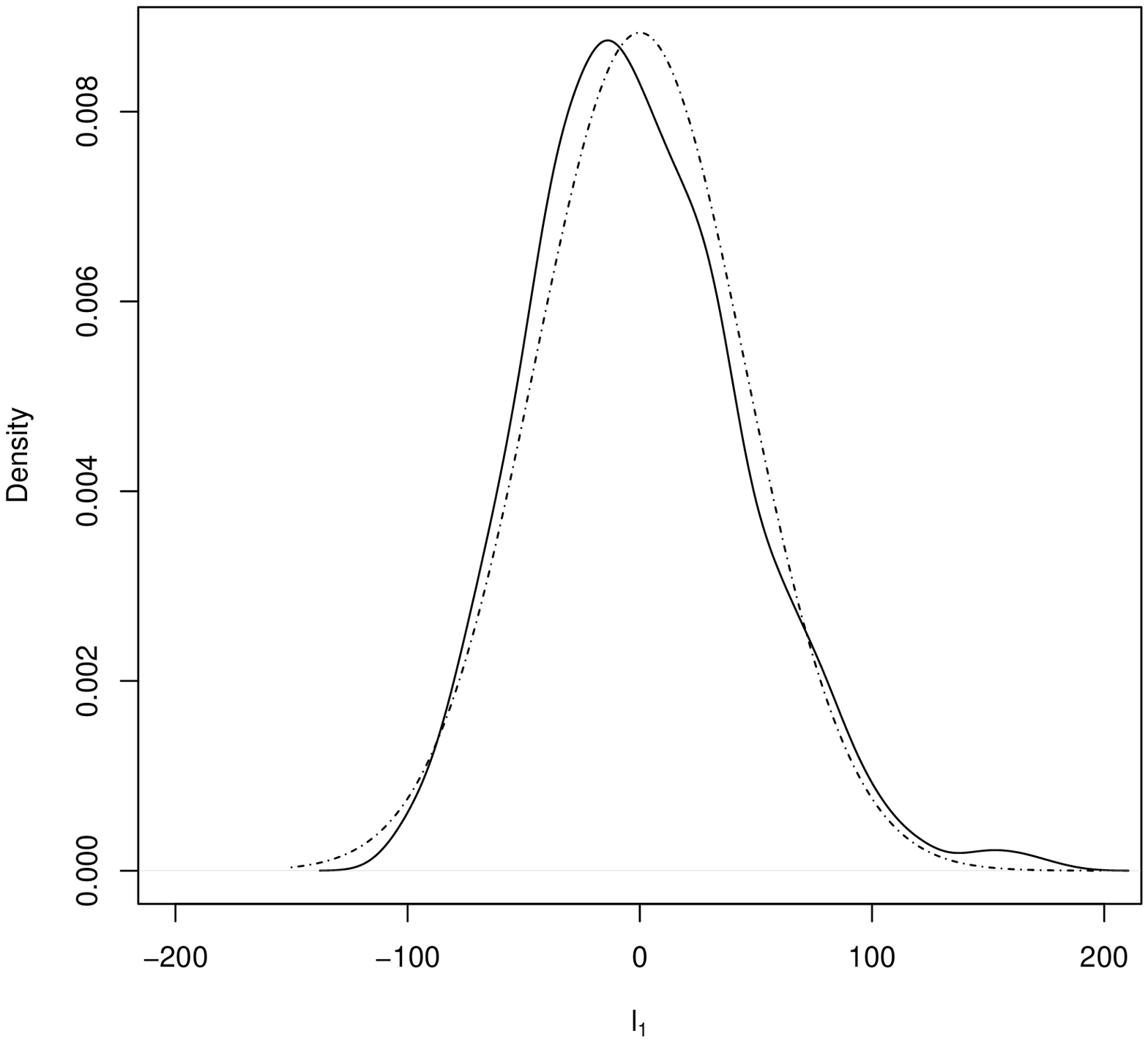}

  \includegraphics[width=0.9\linewidth,height=5cm]{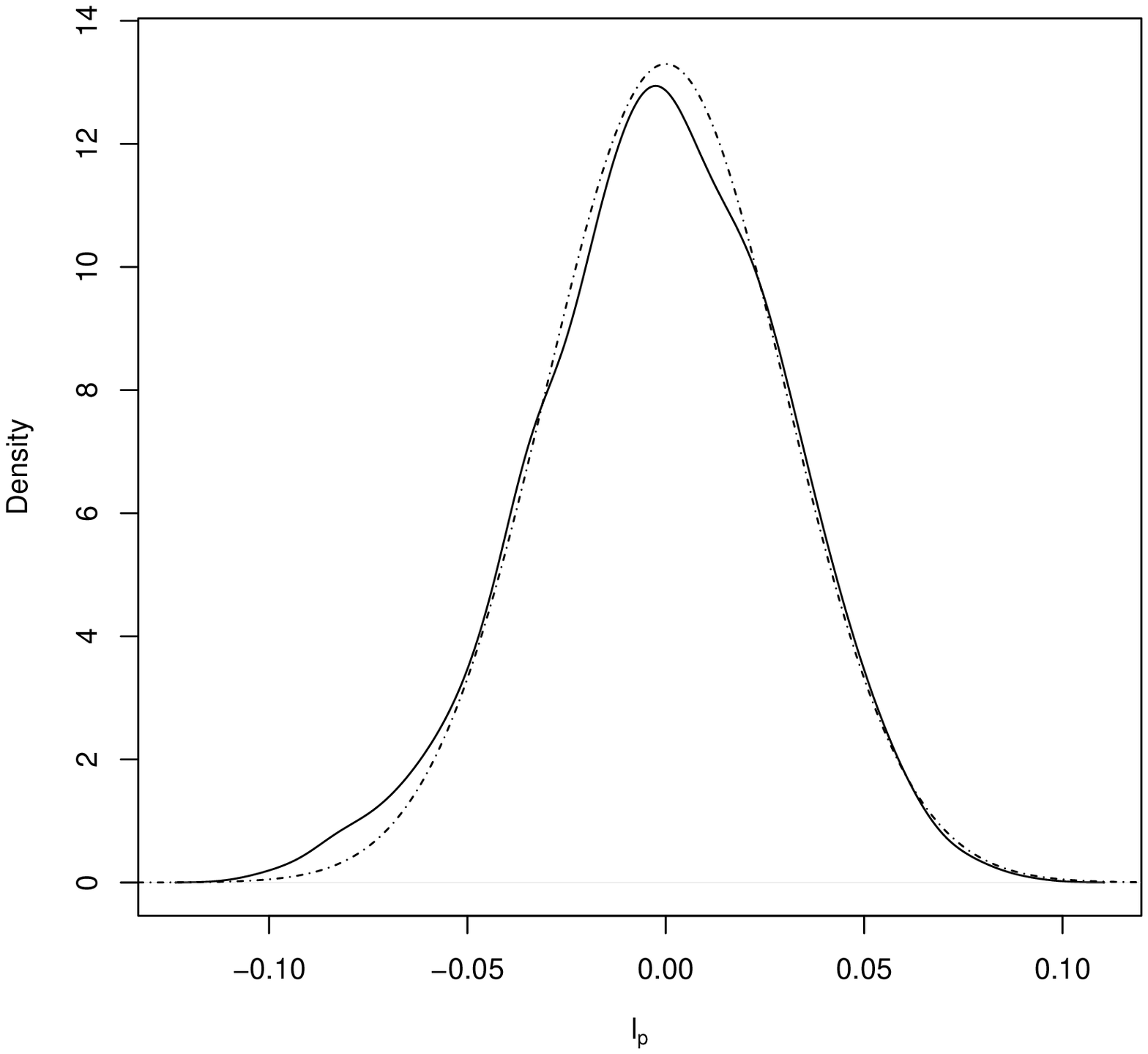}

  \mbox{
  \includegraphics[width=0.5\linewidth]{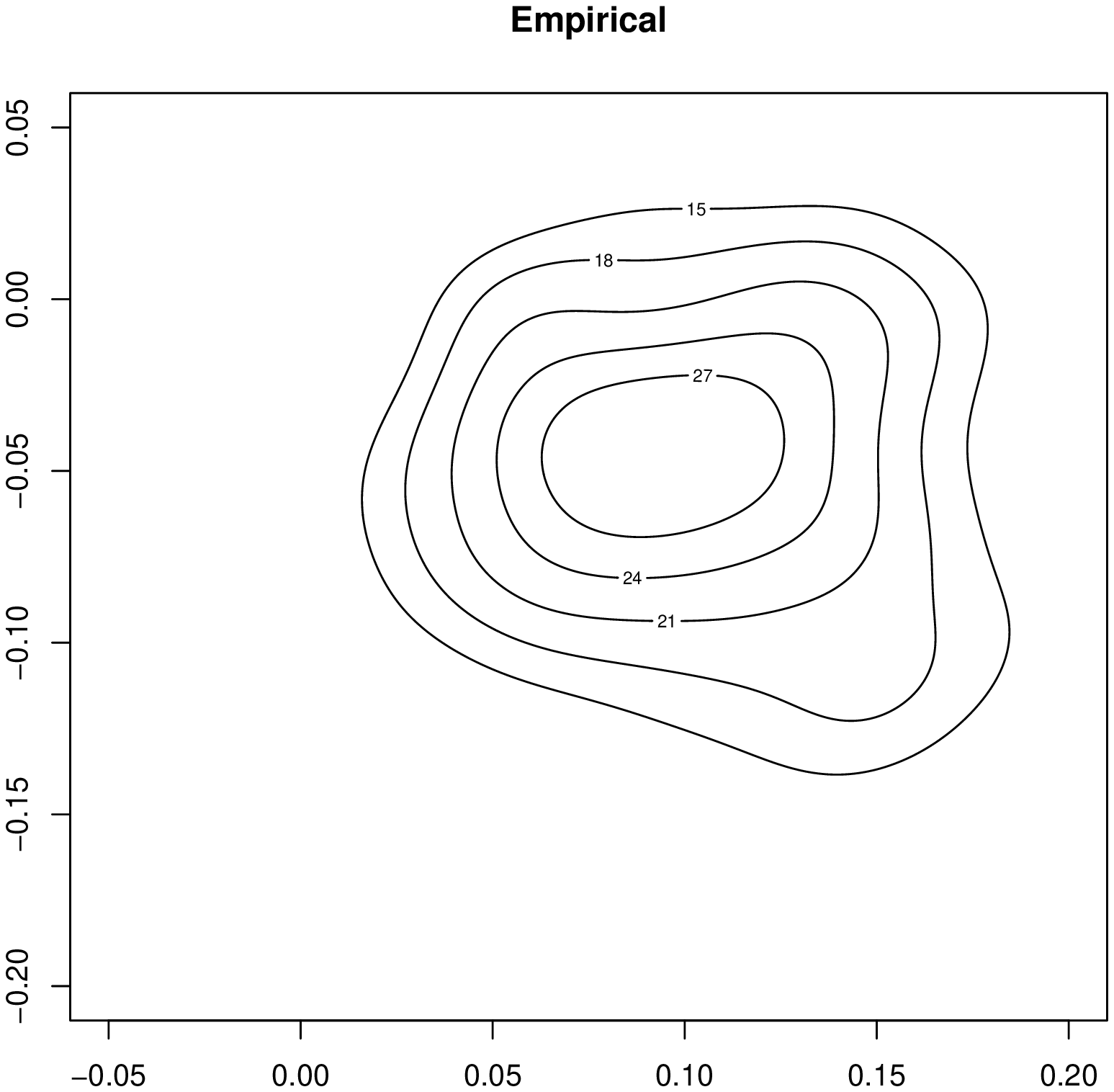}
  \includegraphics[width=0.5\linewidth]{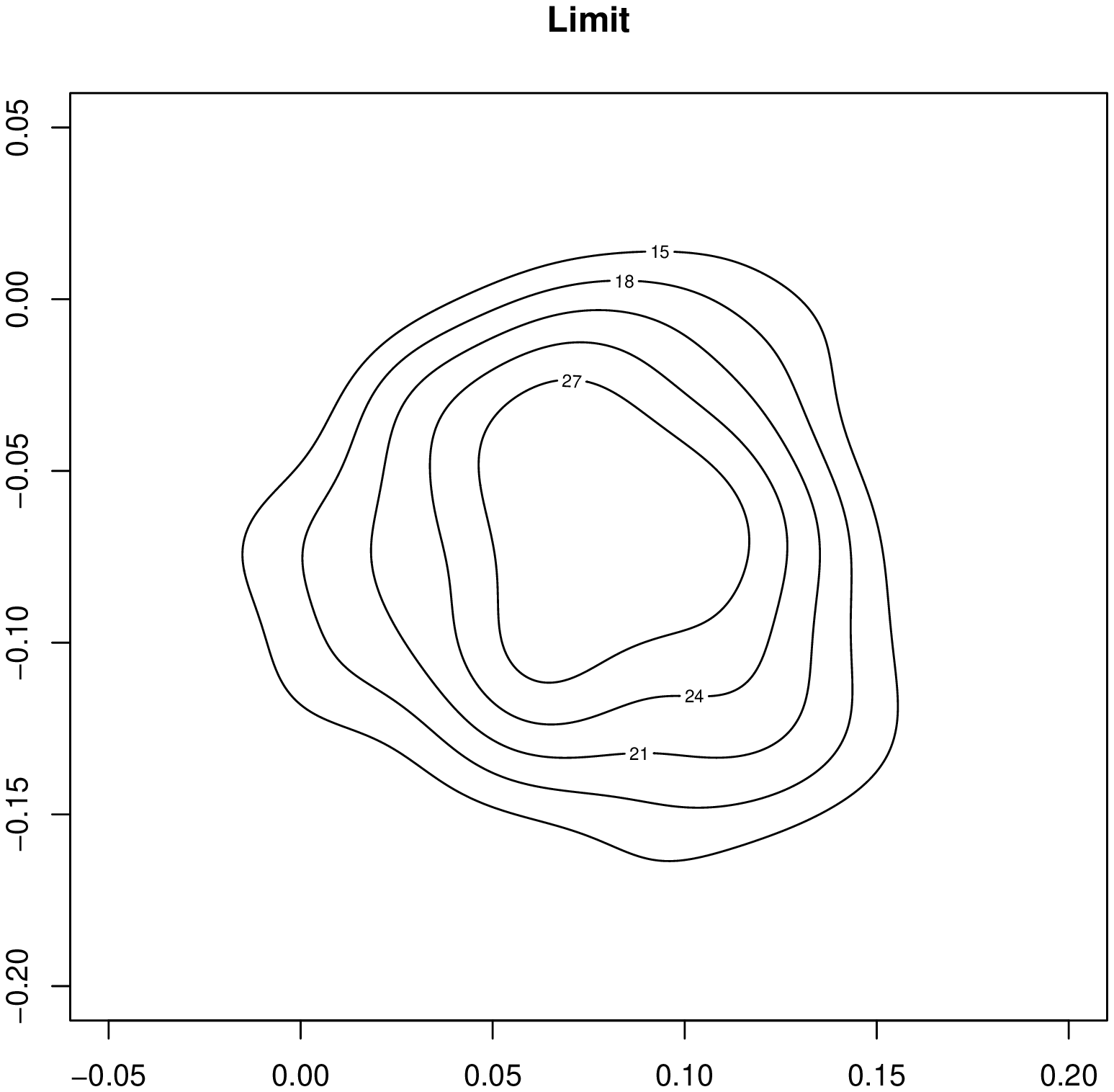}
  }

\caption{Upper panels show the empirical densities  of $l_1$ and
    $l_p$ (solid lines, after centralisation and scaling)
    compared to their Gaussian limits (dashed lines).
    Lower panels show contour plots of  empirical joint  density
    function  of $(l_{p-2}, l_{p-1})$ (left plot, after centralisation and scaling)
    and contour plots of their limits  (right plot). Both the empirical and limit joint density functions are displayed using the two-dimensional kernel density estimates.
    Samples are from i.i.d. binary  distribution with
    $U=I_4$ and
    1000 independent replications.
    \label{binaryextreme}}
\end{figure}
Second, we assume all the  $z_{ij}$ and $w_{ij}$ are i.i.d. binary variables taking values $\{1, -1\}$ with probability $1/2$, and in this case we have $v_4=1$. Similarly, we have
 \begin{itemize}
   \item $\sqrt p \{l_1-42.67\}\rightarrow N(0, 2039.8)$~,
   \item $\sqrt p \{l_p-0.07\}\rightarrow N(0, 9\times10^{-4})$~,
   \item The two-dimensional random vector
     $  \sqrt p \{l_{p-2}-0.13,l_{p-1}-0.13  \} $
     converges to the eigenvalues of the random matrix  $-R_{mn}/1.45$.  Here, $R_{mn}$ is the $2 \times 2$ symmetric random matrix, made with independent Gaussian entries of mean zero and variance
      \begin{align*}
         \var (R_{mn})=\left\{\begin{array}{ll}
        0.008~,& m=n~,\\
        0.02~, & m\neq n~.
      \end{array}\right.
  \end{align*}
 \end{itemize}
Figure \ref{binaryextreme}, upper panels,  show the empirical kernel density
estimates  of $\sqrt p \{l_1-42.67\}$ and $\sqrt p \{l_p-0.07\}$ from
1000 independent replications (in solid lines), compared to their
Gaussian limits (in dashed lines). Also, the lower panel on the figure
show  the
contour lines of the  empirical joint  density of the
$  \sqrt p \{l_{p-2}-0.13,l_{p-1}-0.13  \} $
(the left plot), with the right plot displaying   the contour lines of their  limit.

\subsection{Case of general U}

In this subsection,   we consider  the following non unit 
orthogonal matrix 
\begin{align}\label{u}U=\begin{pmatrix}
    1 & 0 & 0 & 0 \\
    0 & 1 & 0 & 0 \\
    0 & 0 & \frac{1}{\sqrt 2} & \frac{1}{\sqrt 2} \\
    0 & 0 & \frac{1}{\sqrt 2} & \frac{-1}{\sqrt 2} \\
  \end{pmatrix}~,
\end{align} i.e., we have
\begin{align*}
U_1=\begin{pmatrix}
      1 \\
      0 \\
      0 \\
      0 \\
    \end{pmatrix}~,\quad
U_2=\begin{pmatrix}
      0 & 0 \\
      1 & 0 \\
      0 &  \frac{1}{\sqrt 2}\\
      0 &  \frac{1}{\sqrt 2} \\
    \end{pmatrix}~,\quad
U_3=\begin{pmatrix}
      0 \\
      0 \\
       \frac{1}{\sqrt 2} \\
       \frac{-1}{\sqrt 2} \\
    \end{pmatrix}~.
\end{align*}
Since Gaussian distribution is invariant under orthogonal transformation, we only consider the case that
all the  $z_{ij}$ and $w_{ij}$ to be i.i.d. binary variables taking
values $\{1, -1\}$ with probability $1/2$, with all the other settings
fixed as in  Section~\ref{ui}.
Then according to Theorem \ref{mainth2}, we have
 \begin{itemize}
   \item $\sqrt p \{l_1-42.67\}\rightarrow N(0, 2039.8)$~,
   \item $\sqrt p \{l_p-0.07\}\rightarrow N(0, 0.004)$~,
   \item The two-dimensional random vector
     $  \sqrt p \{l_{p-2}-0.13,l_{p-1}-0.13  \} $
     converges to the eigenvalues of the random matrix $-U^{*}_2R(\lambda_2)U_2/1.45$.  Here, $R(\lambda_2)$ is the $4 \times 4$ symmetric random matrix, made with independent Gaussian entries of mean zero and variance
      \begin{align*}
         \var (R_{mn})=\left\{\begin{array}{ll}
        0.008~,& m=n~,\\
        0.02~, & m\neq n~.
      \end{array}\right.
  \end{align*}
 \end{itemize}
 Figure \ref{generalextreme}, upper panels,  show the empirical kernel density
 estimates  of $\sqrt p \{l_1-42.67\}$ and $\sqrt p \{l_p-0.07\}$ from
 1000 independent replications (in solid lines), compared to their
 Gaussian limits (in dashed lines). Also, the lower panel of the figure shows the
 contour lines of the empirical joint   density  of
 $  \sqrt p \{l_{p-2}-0.13,l_{p-1}-0.13  \} $
 (the lower-left plot), with the lower-right plot  showing the contour lines of their  limit.

\begin{figure}[htbp!]
  \centering
  \vspace{1cm}
  \includegraphics[width=0.9\linewidth,height=5cm]{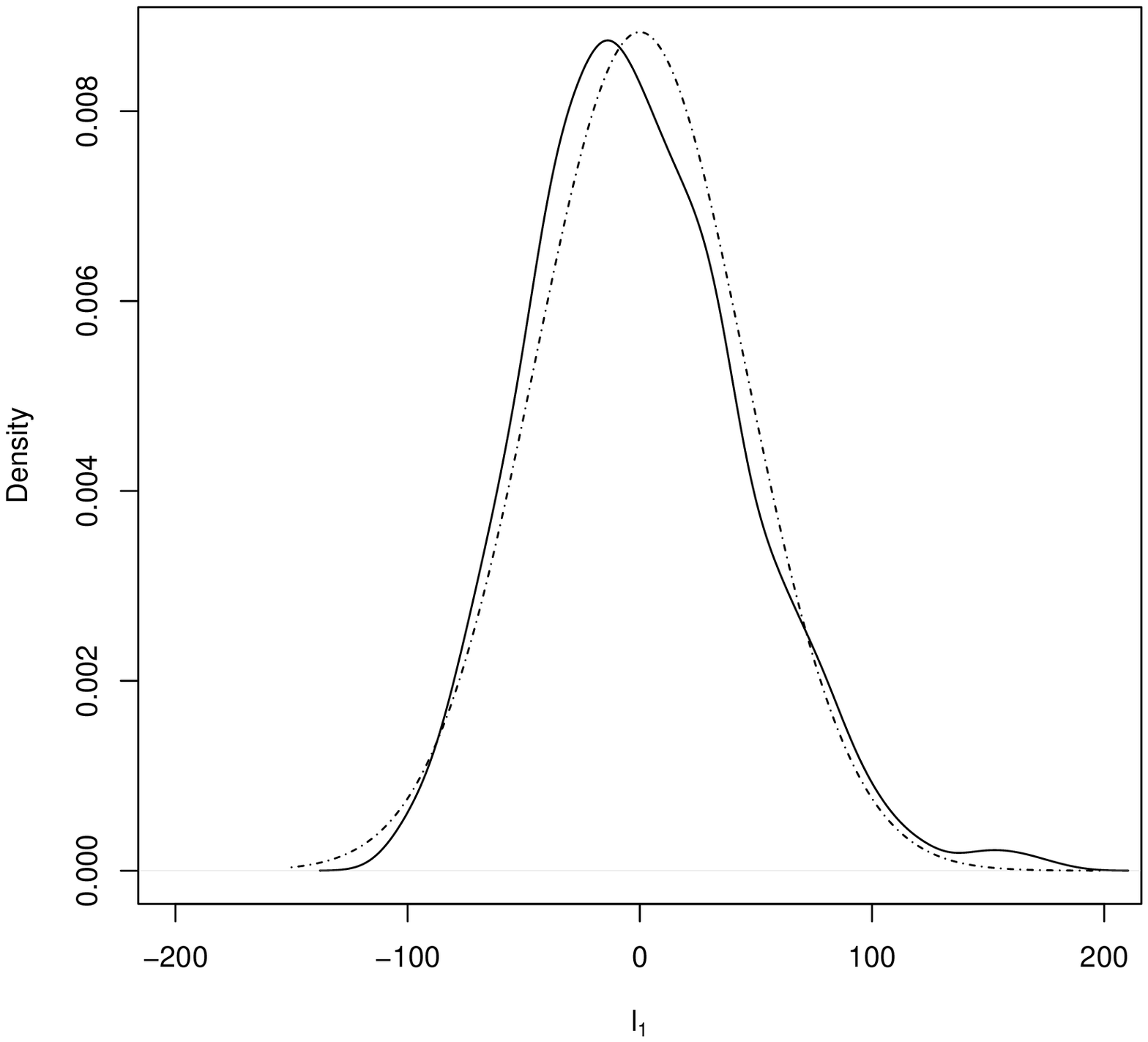}

  \includegraphics[width=0.9\linewidth,height=5cm]{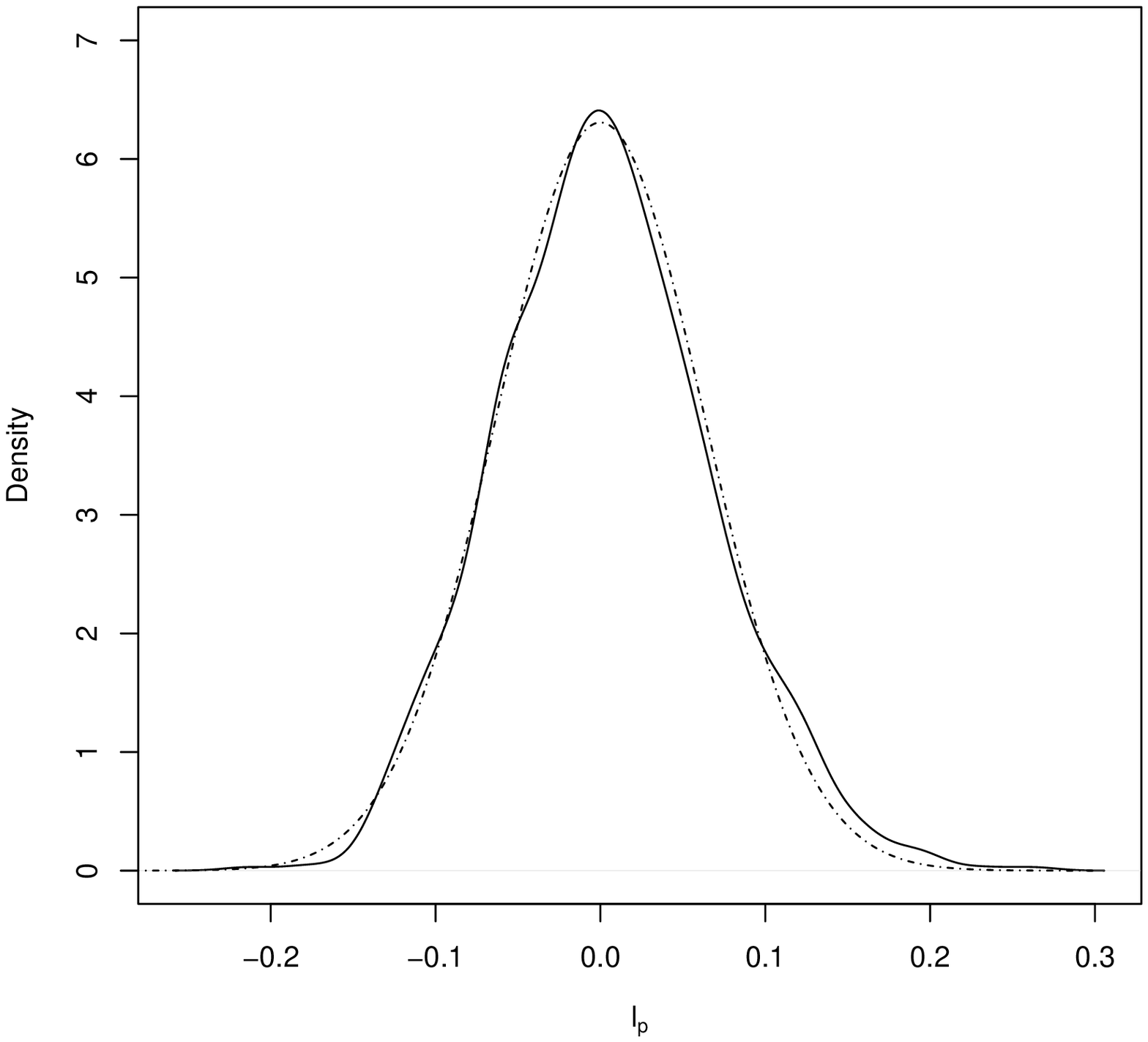}

  \mbox{
    \includegraphics[width=0.5\linewidth]{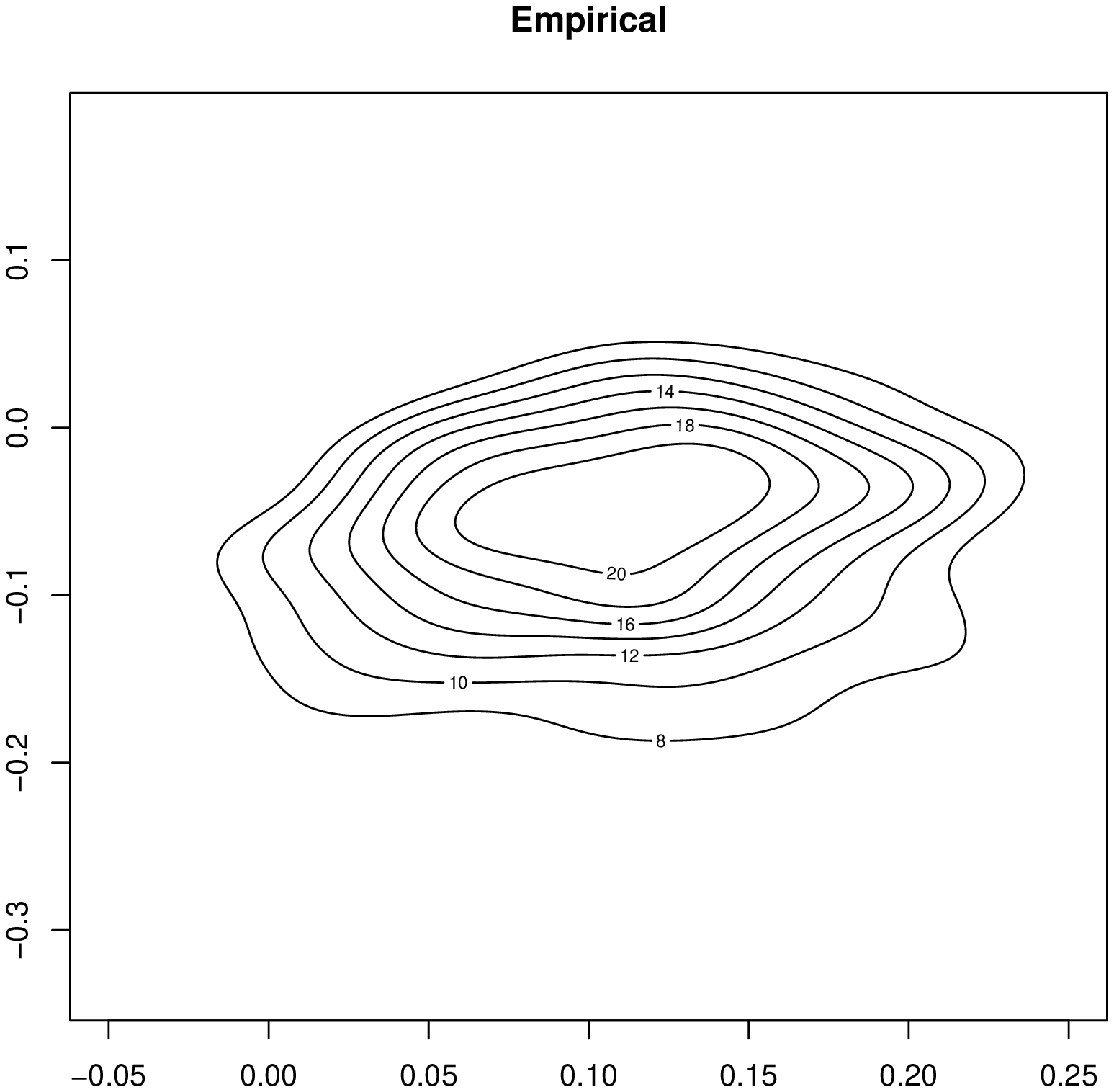}
    \includegraphics[width=0.5\linewidth]{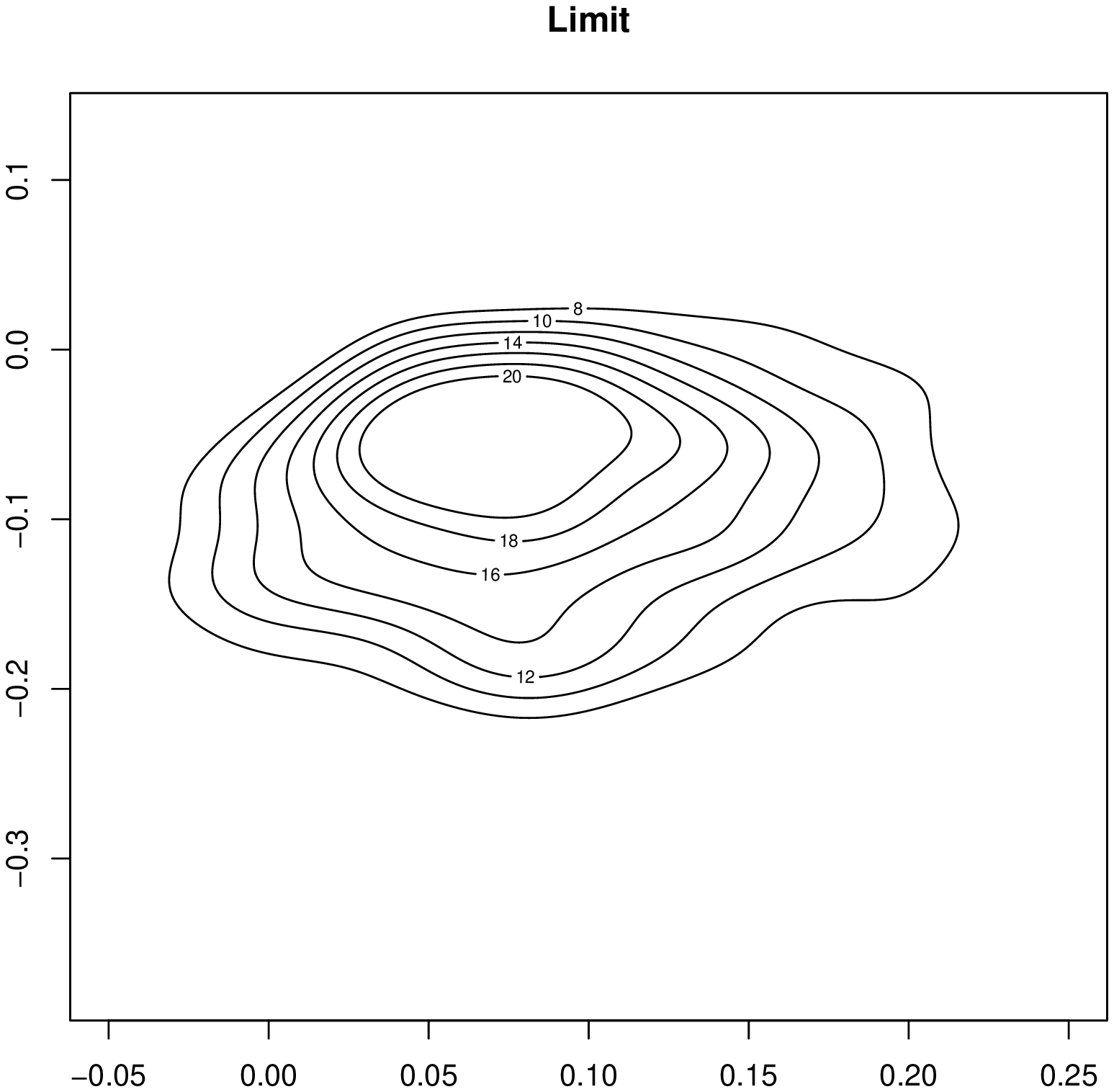}
  }

\caption{Upper panels show the empirical densities  of $l_1$ and
    $l_p$ (solid lines, after centralisation and scaling)
    compared to their Gaussian limits (dashed lines).
    Lower panels show contour plots of  empirical joint  density
    function  of $(l_{p-2}, l_{p-1})$ (left plot, after centralisation and scaling)
    and contour plots of their limits  (right plot). Both the empirical and limit joint density functions are displayed using the two-dimensional kernel density estimates.
    Samples are from i.i.d. binary  distribution with $U$ given by \eqref{u}
    and
    1000 independent replications.
    \label{generalextreme}}
\end{figure}

\section{Joint distribution of the outlier eigenvalues}
\label{sec:CLT2}

In the previous section, we have obtained the following result for the
outlier eigenvalues: the $n_i$-dimensional real random vector $\sqrt p
\{l_{p,j}-\lambda_i, j \in J_i\}$ converges to the distribution of the
eigenvalues of random matrix
$-U^{*}_iR(\lambda_i)U_i/\Delta(\lambda_i)$. It is in fact possible to
derive their joint distribution, i.e. the  limit of the
$M$-dimensional real random vector 
\begin{align}\label{vect}
  \left(\begin{array}{c}
      \sqrt p \{l_{p,j_1}-\lambda_1, j_1 \in J_1\}\\
      \vdots\\
      \sqrt p \{l_{p,j_k}-\lambda_k, j_k \in J_k\}
    \end{array}\right)~.
\end{align}
Such joint convergence results are useful for inference procedures
where consecutive sample eigenvalues are used such as their
differences or ratios, see e.g. \citet{o09} and \citet{PasYao14}.

\begin{theorem}\label{mainth3}
  Assume the same condition as in Theorem~\ref{mainth2} and that all the
  population spikes $a_i$ satisfy
  the condition  $|a_i-\gamma|>\gamma \sqrt{c+y-cy}$. Then the
  $M$-dimensional vector in \eqref{vect}  converges in distribution
  to the eigenvalues of the  $M \times M$ random matrix
  \begin{align}\label{kblock}
    \left(\begin{array}{ccc}
        \frac{-U^{*}_1R(\lambda_1)U_1}{\Delta(\lambda_1)} &\ldots&0\\
        \vdots&\ddots &\vdots\\
        0&\cdots & \frac{-U^{*}_kR(\lambda_k)U_k}{\Delta(\lambda_k)}
      \end{array}\right)~,
  \end{align}
  where the matrices  $R(\lambda_i)$, made with zero-mean independent
  Gaussian random variables,   are defined in
  Theorem~\ref{mainth2}, 
  with the 
  the following covariance function between different blocks ($l\neq s$): for $1\leq i \leq j\leq M$,
  \begin{align*}
    \cov(R(\lambda_l)(i,j), R(\lambda_s)(i,j))=\left\{
      \begin{array}{ll}
        \theta(l,s)~, & i\neq j~,\\
        \omega(l,s)(v_4-3)+2\theta(l,s)~, & i=j~,
      \end{array}\right.
  \end{align*}
  where
  \begin{align*}
    \theta(l,s)&=\lim \frac{1}{n+T} \tr A_n(\lambda_l)A_n(\lambda_s)~,\\
    \omega(l,s)&=\lim \frac{1}{n+T}\sum_{i=1}^{n+T}A_n(\lambda_l)(i,i)A_n(\lambda_s)(i,i)~,
  \end{align*}
  and $A_n(\lambda)$ is defined in \eqref{anl}.
\end{theorem}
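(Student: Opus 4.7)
(Proof proposal for Theorem~\ref{mainth3})
The plan is to extend the analysis of Theorem~\ref{mainth2} from a single spike to all $k$ spikes simultaneously. For each $i=1,\ldots,k$, the derivation leading to \eqref{final} gives, for every $j\in J_i$,
\[
\Bigl|\sqrt p\,(l_{p,j}-\lambda_i)\,\Delta(\lambda_i)\,I_{n_i}+[U^{*}R_n(\lambda_i)U]_i\Bigr|\longrightarrow 0,
\]
so that the $n_i$-packed scaled outlier vector is asymptotically the eigenvalues of $-U_i^{*}R_n(\lambda_i)U_i/\Delta(\lambda_i)$. Stacking these $k$ relations yields exactly the block-diagonal structure in \eqref{kblock}. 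Hence the theorem reduces to establishing the joint weak convergence of the $k$-tuple $(R_n(\lambda_1),\ldots,R_n(\lambda_k))$ to a jointly Gaussian $k$-tuple $(R(\lambda_1),\ldots,R(\lambda_k))$ with the covariance structure prescribed in the statement, and then applying the Skorokhod representation device (as in the proof of Theorem~\ref{mainth2}) to convert joint weak convergence of the matrices into joint convergence of their eigenvalue blocks.

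For the joint convergence, I would first rewrite each $R_n(\lambda_i)$ in the sesquilinear form
\[
\tilde R_n(\lambda_i) = \begin{pmatrix} Z_1 & S_1 \end{pmatrix}\,A_n(\lambda_i)\,\begin{pmatrix} Z_1^{*} \\ S_1^{*} \end{pmatrix}-\E[\cdot],
\]
where $A_n(\lambda_i)$ is the $(n+T)\times(n+T)$ block coefficient matrix already appearing in Step~2 of the proof of Theorem~\ref{mainth2} and depending only on $(Z_2,X_2)$, while $(Z_1,S_1)$ is independent of $(Z_2,X_2)$. Because the vector $(\tilde R_n(\lambda_1),\ldots,\tilde R_n(\lambda_k))$ is, conditionally on $(Z_2,X_2)$, a centred quadratic form in the i.i.d. rows of $(Z_1,S_1)$, a conditional multivariate CLT for bilinear/sesquilinear forms (Proposition~3.1 and Remark~1 of \cite{BaiYao08}, applied jointly at the $k$ indices $\lambda_1,\ldots,\lambda_k$ exactly as Lemma~\ref{lemm3} does for a single index) gives a jointly Gaussian limit. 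The marginal variances are those already computed for Theorem~\ref{mainth2}.

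The main work is therefore the computation of the cross-covariances between $R_n(\lambda_l)$ and $R_n(\lambda_s)$ for $l\neq s$. Writing each matrix as a centred quadratic form and expanding, one obtains contributions of two types: a ``trace term'' coming from off-diagonal Wick contractions that, by the trace approximation lemmas used throughout the paper, converges to $\theta(l,s)=\lim (n+T)^{-1}\tr A_n(\lambda_l)A_n(\lambda_s)$; and a ``diagonal term'' arising only for $i=j$ and only from the excess fourth cumulant, which converges to $(v_4-3)\omega(l,s)$ with $\omega(l,s)=\lim (n+T)^{-1}\sum_i A_n(\lambda_l)(i,i)A_n(\lambda_s)(i,i)$. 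Combined with a factor $2$ for the symmetric entries, this reproduces exactly the covariance formulas in the statement. The almost sure existence of the limits $\theta(l,s)$ and $\omega(l,s)$ follows from the same concentration/trace arguments already used at a single $\lambda$ in the proof of Theorem~\ref{mainth2}.

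The main obstacle, I expect, is the bookkeeping for the joint CLT of the sesquilinear forms at the $k$ different arguments $\lambda_1,\ldots,\lambda_k$: one needs to track simultaneously the trace contribution and the fourth-cumulant contribution, and to verify that no cross term between distinct blocks of the partition $(J_1,\ldots,J_k)$ is lost when projecting the joint Gaussian limit $R(\lambda_\cdot)$ through the orthogonal blocks $U_1,\ldots,U_k$ of the matrix $U$. Once joint Gaussianity with the stated covariance is secured, the Skorokhod coupling makes the block-diagonal eigenvalue conclusion automatic, as the eigenvalues of a block-diagonal matrix depend continuously on each diagonal block and the off-diagonal blocks of \eqref{kblock} are (deterministically) zero; the non-trivial dependence between the different eigenvalue groups is carried entirely by the correlated Gaussian entries of the $R(\lambda_i)$'s.
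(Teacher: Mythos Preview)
Your proposal is correct and matches the paper's approach: the paper itself omits the proof, stating only that it is very close to that of Theorem~2.3 in \cite{wang}, which provides precisely the joint CLT for several random sesquilinear forms $\tilde R_n(\lambda_1),\ldots,\tilde R_n(\lambda_k)$ that you propose to obtain by extending Proposition~3.1 of \cite{BaiYao08}. So rather than redoing the joint-CLT bookkeeping from scratch, you may invoke that ready-made result directly; the cross-covariance parameters $\theta(l,s)$ and $\omega(l,s)$ in the statement are exactly those produced by that theorem.
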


The proof of this theorem is very close to that of  Theorem 2.3 in \citet{wang}, thus omitted.

In principle, the limiting parameters $\theta(l,s)$ and
$\omega(l,s)$ can be  completely specified for a given spiked structure. However, this will lead to quite complex formula. Here, we prefer explain a simple case where
 $\Omega_p$ is diagonal whose eigenvalues $|a_i-\gamma|>\gamma \sqrt{c+y-cy}$ ($i=1, \cdots, M$) are all simple, we have $U=I_M$, $M=k$ and $n_i=1$ ($i=1, \cdots, M$).
Therefore, $U^{*}_iR(\lambda_i)U_i$ in \eqref{kblock} reduces to the $(i,i)$-th element of $R(\lambda_i)$, which is a Gaussian random variable. Besides, from Theorem \ref{mainth3}, we see that  the random variables $\{R(\lambda_i)(i,i)\}_{i=1, \cdots, M}$ are jointly independent since the index sets $(i,i)$ are disjoint.
Finally, we have the following joint distribution of the  $M$ outlier eigenvalues of $S_2^{-1}S_1$.
\begin{proposition}\label{jointsim}
  Under the same assumptions as in Theorem \ref{mainth2}, when $\Omega_p$ is diagonal with all its eigenvalues $|a_i-\gamma|>\gamma \sqrt{c+y-cy}$ being simple, the $M$ outlier eigenvalues  $l_{p,j}$ ($j=1, \cdots, M$) of $S_2^{-1}S_1$ are asymptotically independent Gaussian:
  \begin{align*}
    \left(\begin{array}{c}
        \sqrt p (l_{p,1}-\lambda_1)\\
        \vdots\\
        \sqrt p (l_{p,M}-\lambda_M)
      \end{array}\right)\Longrightarrow \mathcal{N}\left({\bf 0}_M, \begin{pmatrix}
        \sigma^2_1 &\cdots & 0 \\
        \vdots & \ddots & \vdots \\
        0 & \cdots & \sigma^2_M \\
      \end{pmatrix}
    \right)~,
  \end{align*}
  where
  \begin{align*}
    \sigma^2_i&=\frac{2a^2_i(cy-c-y)(a_i-1)^2(-1+2a_i+c+a^2_i(y-1))}{(1+a_i(y-1))^4}\\
    &\quad+(v_4-3)\cdot\frac{a^2_i(c+y)(-1+2a_i+c+a^2_i(y-1))^2}{(1+a_i(y-1))^4}~.
  \end{align*}
\end{proposition}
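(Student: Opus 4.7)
The plan is to obtain Proposition \ref{jointsim} as a direct specialisation of Theorem \ref{mainth3} to the case where $\Omega_p$ is diagonal with all simple spikes. Under these hypotheses we have $U=I_M$, $k=M$, $n_i=1$ for every $i$, and the index sets $J_i$ are singletons $\{i\}$ (up to ordering). Consequently each $M\times n_i$ block $U_i$ reduces to the standard basis vector $e_i$, and the block $U_i^{*}R(\lambda_i)U_i$ in \eqref{kblock} collapses to the scalar $(i,i)$-entry $R(\lambda_i)(i,i)$.

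Next I would invoke Theorem \ref{mainth3} directly: the $M$-dimensional random vector in \eqref{vect} converges in distribution to the eigenvalues of the diagonal matrix
\begin{equation*}
  \mathrm{diag}\!\left(-\frac{R(\lambda_1)(1,1)}{\Delta(\lambda_1)},\;\ldots,\;-\frac{R(\lambda_M)(M,M)}{\Delta(\lambda_M)}\right),
\end{equation*}
whose eigenvalues are simply its diagonal entries, in the natural order. Each such entry is a centred Gaussian variable, and the variance of $R(\lambda_i)(i,i)$ is $2\theta_i+(v_4-3)\omega_i$ by \eqref{varrij}. Dividing by $\Delta(\lambda_i)^2$ from \eqref{del} and simplifying (exactly the computation already done at the end of the proof of Proposition \ref{sim}) produces the announced variance $\sigma_i^2$.

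The one substantive point to check is the joint independence of the limit coordinates. Theorem \ref{mainth3} specifies nontrivial covariances $\theta(l,s)$ and $\omega(l,s)$ only between entries of $R(\lambda_l)$ and $R(\lambda_s)$ that sit at the \emph{same} matrix position $(i,j)$; entries at distinct positions are uncorrelated. In our setting the relevant entries are $R(\lambda_l)(l,l)$ and $R(\lambda_s)(s,s)$, which occupy distinct diagonal positions for $l\neq s$. Being jointly Gaussian and uncorrelated, they are independent. This reduces the joint limit to a product of independent one-dimensional Gaussians, which is the stated conclusion.

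I do not foresee any real obstacle: the proof is essentially a bookkeeping exercise combining Proposition \ref{sim} (which already handles each marginal) with the block-diagonal limit structure of Theorem \ref{mainth3}. The only place that requires momentary care is tracing through the covariance formulas in Theorem \ref{mainth3} to confirm that the cross-spike covariances vanish because the involved index pairs are disjoint; this is a direct consequence of the independence of different matrix entries inherited from the sesquilinear form construction $\tilde R_n(\lambda_i)$ used in the proof of Theorem \ref{mainth2}.
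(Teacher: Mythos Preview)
Your proposal is correct and follows essentially the same route as the paper: the paper also specialises Theorem~\ref{mainth3} to $U=I_M$, $k=M$, $n_i=1$, observes that each block $U_i^{*}R(\lambda_i)U_i$ collapses to the diagonal entry $R(\lambda_i)(i,i)$, and then argues that these diagonal entries are jointly independent because they sit at disjoint index positions $(i,i)$ in the covariance structure of Theorem~\ref{mainth3}. The variance identification via Proposition~\ref{sim} is likewise identical.
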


\section{Application to large-dimensional signal detection}
\label{sec:app}

In this section,  we develop  an application of the previous results
to an inference problem where  spiked Fisher matrices
arise naturally. In a signal detection equipment, records are of form
\begin{equation}
  \label{eq:signal}
  x_i = A s_i + e_i ~,\quad i=1,\ldots,T
\end{equation}
where $x_i$ is $p$-dimensional, $s_i$ is a $k\times 1$ low-dimensional signal
$(k\ll p)$ with unit covariance matrix, $A$ a $p\times k$ {\em mixing}
matrix, and $(e_i)$ is an i.i.d. noise with covariance matrix
$\Sigma_2$.
Therefore, the covariance matrix of $x_i$ can be considered as a
$k$-dimensional perturbation of $\Sigma_2$, denoted as $\Sigma_p$ in
the following. Notice that none of the quantities in the r.h.s. of
\eqref{eq:signal}
is observed.
One of the fundamental problem here is to estimate $k$, the number of
signals present in the system.  This problem is challenging when the
dimension $p$ is large, say has a comparable magnitude with the sample
size $T$.  When the noise has the simplest
covariance structure, i.e. $\Sigma_2=\sigma^2_eI_p$, this
problem has been much investigated recently and several solutions are
proposed, see e.g. \cite{Nadler1}, \cite{Nadler3},
\cite{PasYao12,PasYao14}.
However the problem with an {\em arbitrary} noise covariance matrix $\Sigma_2$, say  diagonal to simplify,
remains unsolved in the large-dimensional
context (to the best of our knowledge).  Nevertheless, there exists
 an astute engineering
device where
the system can be tuned in a signal-free
environment, for example in laboratory: that is  we can directly record
 a sequence of pure-noise observations $z_j$, $j=1,\ldots,n$,
which have the same distribution as the
$(e_i)$ above.
These signal-free records can then be used to  whiten the
 observations $(x_i)$  as follows.
Let $S_1 =
T^{-1}  \sum_{i=1}^T x_i x_i^*$, $S_2 =  n^{-1}  \sum_{i=1}^n z_i z_i^*$ and
$l_i, i=1,\cdots, p$ be the eigenvalues of $S_2^{-1}S_1$.
Notice that the  eigenvalues $\{l_i\}$
are invariant  under the transformation
$S_1\mapsto \Sigma_2^{-1/2} S_1\Sigma_2^{-1/2}$,
$S_2\mapsto \Sigma_2^{-1/2} S_2\Sigma_2^{-1/2}$; they are in fact {\em
  independent of  $\Sigma_2$}.
Therefore, these eigenvalues
can be thought as if $\Sigma_2=I_p$, that is
$S_2^{-1}S_1$ becomes a spiked Fisher matrix as introduced in Section
\ref{sec:model}. This  is actually the reason  why the two sample
procedure developed here can deal with an arbitrary covariance matrix
of the  noise while the existing one-sample procedures cannot.
Based on  Theorem~\ref{mainth1}, we propose our estimator of the
number of signals as the number of  eigenvalues of $S_2^{-1}S_1$
 larger than the right edge point of the support of its LSD:
\begin{align}\label{estimator}
\hat{k}=\max\{i: l_i\geq b+d_n\}~,
\end{align}
where $(d_n)$ is a sequence of vanishing constants.

\begin{theorem}\label{appli}
Assume all the spike eigenvalues $a_i$ ($i=1, \cdots, k$) satisfy $a_i>\gamma+\gamma \sqrt{c+y-cy}$. Let $d_n$ be a  sequence of positive numbers such that $d_n \rightarrow 0$, $\sqrt p \cdot d_n \rightarrow 0$ and $p^{2/3}\cdot d_n \rightarrow +\infty$ as $p \rightarrow +\infty$, then the estimator $\hat{k}$ is  constant, i.e. $\hat{k} \rightarrow k$ in probability as $p \rightarrow +\infty$.
\end{theorem}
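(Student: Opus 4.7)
The plan is to decompose the target event as $\{\hat k = k\} = \{l_k \geq b+d_n\} \cap \{l_{k+1} < b+d_n\}$ and to show that each of the two events holds with probability tending to one. The first event says that the $k$ true outliers are all above the threshold; the second says that the largest bulk eigenvalue does not cross it spuriously. Each of the two sides is controlled by a different scale of fluctuation and uses different inputs from the previous sections.

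For the first event, the assumption $a_i > \gamma(1+\sqrt{c+y-cy})$ places every spike in the super-critical regime of Theorem~\ref{mainth1}, so for $i=1,\ldots,k$ the corresponding sample outlier $l_i$ converges almost surely to $\phi(a_i)$, and all these limits strictly exceed $b$. In particular $l_k \to \phi(a_k) > b$ a.s., and since $d_n \to 0$ the quantity $l_k - (b+d_n)$ tends a.s.\ to the positive constant $\phi(a_k)-b$, whence $P(l_k \geq b+d_n) \to 1$. Note that the auxiliary condition $\sqrt p\, d_n \to 0$ is not needed here; only $d_n\to 0$ is used. (If one wanted a rate, one could sharpen this step via the CLT of Theorem~\ref{mainth2}, which is where $\sqrt p\, d_n \to 0$ would become relevant.)

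For the second event, one must bound the largest non-outlier eigenvalue. Since the spiked structure is a finite-rank perturbation of $I_p$, the $p-k$ sample eigenvalues $l_{k+1},\ldots,l_p$ behave, to the relevant order, like those of an unperturbed Fisher matrix whose ESD is $F_{c,y}$ supported on $[b_1,b]$. The Wachter edge $b$ acts as a hard wall: combining the ``no eigenvalue outside the support'' theorem for large Fisher matrices with the Tracy--Widom-type edge scaling gives the quantitative bound
\[
 l_{k+1} - b = O_P(p^{-2/3}).
\]
The assumption $p^{2/3} d_n \to +\infty$ is then exactly what is needed to conclude
\[
 P(l_{k+1}\geq b+d_n) \;=\; P\bigl(p^{2/3}(l_{k+1}-b) \geq p^{2/3} d_n\bigr) \;\longrightarrow\; 0.
\]
Putting the two halves together yields $P(\hat k = k)\to 1$.

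The main obstacle is supplying this edge bound $l_{k+1}-b = O_P(p^{-2/3})$, because Theorem~\ref{mainth1} only delivers the pointwise limit $l_{k+1}\to b$ and says nothing about its rate. One would invoke the Tracy--Widom edge universality for Fisher matrices as developed in the random matrix literature, or, at a minimum, the weaker statement that for any deterministic sequence $\eta_p$ with $p^{2/3}\eta_p \to \infty$ one has $P(l_{k+1} > b + \eta_p) \to 0$; either version is enough to close the argument given the stated assumptions on $d_n$, and this is also what dictates the scale $p^{-2/3} \ll d_n$ appearing in the hypothesis.
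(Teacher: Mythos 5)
Your proof is correct and follows the same decomposition and the same key input as the paper's: split $\{\hat k = k\}$ into the event that the $k$ true outliers clear the threshold and the event that the largest bulk eigenvalue does not, and control the latter by the Tracy--Widom edge scaling $l_{k+1}-b=O_P(p^{-2/3})$, against which the hypothesis $p^{2/3}d_n\to\infty$ is exactly calibrated (the paper cites Bao, Pan and Zhou for this edge result). The only genuine difference is in how the true outliers are shown to clear the threshold: the paper routes this through the CLT of Theorem~\ref{mainth2}, writing $P(l_j<b+d_n)=P\bigl(\sqrt p(l_j-\phi(a_j))<\sqrt p(b+d_n-\phi(a_j))\bigr)$ and invoking $\sqrt p\,d_n\to 0$ together with tightness of $\sqrt p(l_j-\phi(a_j))$, whereas you argue directly from the almost-sure convergence $l_j\to\phi(a_j)>b$ given by Theorem~\ref{mainth1}, which needs only $d_n\to 0$. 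Your observation that $\sqrt p\,d_n\to 0$ is superfluous is correct, and in fact applies to the paper's own argument as well, since $\sqrt p(b+d_n-\phi(a_j))\to-\infty$ already for any $d_n\to 0$ because $b-\phi(a_j)$ is a fixed negative constant; so your version is a genuine, if modest, simplification. Both proofs also share the same unaddressed subtlety, which you are right to flag as the main technical input: one needs to know that the Tracy--Widom edge fluctuation, established for unspiked Fisher matrices, still governs $l_{k+1}$ after the finite-rank spiked perturbation, which the paper asserts by citation without elaboration.
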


\begin{remark}
Notice here that there's no need for those spikes $a_i$ to be simple, the only requirement is that they should be properly strong enough $(a_i>\gamma+\gamma \sqrt{c+y-cy})$ for detection.
\end{remark}

\begin{proof}(of Theorem \ref{appli}).\quad  Since
\begin{align*}
\{\hat{k}=k\}=\big\{k=\max\{i: l_i\geq b+d_n\}\big\}=\big\{\forall j \in \{1, \cdots, k\}, l_j\geq b+d_n\big\}\bigcap \big\{l_{k+1}<b+d_n\big\}~,
\end{align*}
we have
\begin{align}\label{r5}
P\{\hat{k}=k\}&=P\left(\bigcap_{1\leq j \leq k}\{ l_j\geq b+d_n\big\}\bigcap \big\{l_{k+1}<b+d_n\big\}\right)\nonumber\\
&=1-P\left(\bigcup_{1\leq j \leq k}\{ l_j< b+d_n\big\}\bigcup \big\{l_{k+1}\geq b+d_n\big\}\right)\nonumber\\
&\geq 1-\sum_{j=1}^k P(l_j< b+d_n)-P(l_{k+1}\geq b+d_n)~.
\end{align}
For $j=1, \cdots, k$,
\begin{align}\label{r1new}
P(l_j< b+d_n)&=P\Big(\sqrt p (l_j-\phi(a_j))< \sqrt p(b+d_n-\phi(a_j))\Big)\nonumber\\
&\rightarrow P\Big(\sqrt p (l_j-\phi(a_j))< \sqrt p(b-\phi(a_j))\Big)~,
\end{align}
which is due to the assumption that $\sqrt p \cdot d_n \rightarrow
0$. Then the part $\sqrt p(b-\phi(a_j))$ in \eqref{r1new} will tend to
$-\infty$ since we have always $\phi (a_j)>b$ when $a_i>\gamma+\gamma
\sqrt{c+y-cy}$. On the other hand, by Theorem~\ref{mainth2},
$\sqrt p (l_j-\phi(a_j))$ in \eqref{r1new} has a limiting
distribution; it is then bounded in probability.  Therefore, we have
\begin{align}\label{r4}
P(l_j< b+d_n)\rightarrow 0~\quad \quad \text{for} ~j=1, \cdots, k~.
\end{align}
Also
\begin{align*}
P(l_{k+1}\geq b+d_n)=P\Big(p^{2/3} (l_{k+1}-b)\geq p^{2/3}\cdot d_n\Big)~,
\end{align*}
and the part $p^{2/3} (l_{k+1}-b)$ is asymptotically Tracy-Widom
distributed (see \citet{bao} where the Tracy-Widom distribution for the largest eigenvalue of general sample covariance matrix is derived). As  $p^{2/3} \cdot d_n$  tend to infinity as assumed,  we have
\begin{align}\label{r3}
P(l_{k+1}\geq b+d_n)=0~.
\end{align}
Combine \eqref{r5}, \eqref{r4} and \eqref{r3}, we have $P\{\hat{k}=k\}\rightarrow 1$ as $p \rightarrow +\infty$.
The proof of Theorem \ref{appli} is complete.
\end{proof}

We conduct a short simulation  to illustrate the performance of our
estimator. We fix $y=1/2$ and $c=1/5$ as in Section~\ref{ni}, and the value of $p$ varies from $50$ to $250$, therefore, the critical value for $a_i$ in the model \eqref{eq:Sigma} (after whitening) is $a_i>\gamma \{1+\sqrt{c+y-cy}\}=3.55$. For each given pair of $(p,n,T)$, we repeat $1000$ times. The tuning parameter $d_n$ is chosen to be $(\log \log p)/p^{2/3}$.

 Next, suppose $k=3$ and $A$ is a $p \times3$ matrix of form $A=(\sqrt{c_1}v_1, \sqrt{c_2}v_2)$, where
$c_1=10$, $c_2=5$,
\begin{align*}
v_1=\begin{pmatrix}
1 & 0 & \cdots & 0 \\
\end{pmatrix}^*
\quad \text{and}\quad v_2=\begin{pmatrix}
             0 & 1/\sqrt{2} & 1/\sqrt{2} & 0 & \cdots & 0 \\
             0 & 1/\sqrt{2} & -1/\sqrt{2} & 0 & \cdots & 0 \\
           \end{pmatrix}^*~.
\end{align*}
 Besides, assume $\cov (s_i)=I_k$.   In this setting, we have two spike eigenvalues $c_1=10$, $c_2=5$ (before whitening) with multiplicity $n_1=1$, $n_2=2$, respectively.
Finally, we choose $\cov(e_i)$ to be either diagonal or non-diagonal as below.
\begin{table}[htbp]
\centering
\caption{\label{model11} Frequency of our estimator  in Model 1.}
\begin{tabular}{cccccccccccc}
\hline
$p$ && &50 && 100 && 150 && 200 && 250\\
$n$ && &100&& 200&& 300 && 400 && 500\\
$T$ & &&250 && 500 && 750 && 1000 && 1250 \\
\hline
$\hat{k}=1$ && & 0.038 && 0.003 && 0 && 0 && 0.001 \\
$\hat{k}=2$  &&& 0.578 && 0.317 && 0.166 && 0.103 && 0.047\\
${\hat{k}=3}$  &&& \bf{0.381} && \bf{0.675} && \bf{0.818} && \bf{0.883} && \bf{0.937}\\
$\hat{k}=4$  &&& 0.003 && 0.005 && 0.016 && 0.014 && 0.015\\
\hline
\end{tabular}
\end{table}
\begin{itemize}
\item For Model 1: set $\cov(e_i)=\diag(\underbrace{1, \cdots, 1}_{p/2}, \underbrace{2, \cdots, 2}_{p/2})$~. In this case, we have the three non-zero eigenvalues of $(c_1 v_1v^*_1+c_2v_2v^*_2)\cdot[\cov(e_i)]^{-1}$ equal $10, 5, 5$, respectively, which are all larger than the critical value $3.55-1$, therefore, the  number of detectable signals is three;
  \\
\item For Model 2: set $\cov(e_i)$
  be  compound symmetric with all the diagonal elements equal $1$ and all the off-diagonal elements equal $0.1$. In this case, we have for each given $p$, the  three non-zero eigenvalues of $(c_1 v_1v^*_1+c_2v_2v^*_2)\cdot[\cov(e_i)]^{-1}$  are all larger than $5.36(>3.55-1)$. The number of detectable signals is again three.
\end{itemize}
\begin{table}[htbp]
\centering
\caption{\label{model2} Frequency of our estimator  in Model 2.}
\begin{tabular}{cccccccccccc}
\hline
$p$ && &50 && 100 && 150 && 200 && 250\\
$n$ && &100&& 200&& 300 && 400 && 500\\
$T$ & &&250 && 500 && 750 && 1000 && 1250 \\
\hline
$\hat{k}=1$ && & 0.016 && 0 && 0 && 0 && 0 \\
$\hat{k}=2$  &&& 0.475 && 0.186 && 0.053 && 0.028 && 0.008\\
${ \hat{k}}=3$  &&& \bf{0.505} && \bf{0.806} && \bf{0.926} && \bf{0.950} && {\bf 0.971}\\
$\hat{k}=4$  &&& 0.004 && 0.008 && 0.021 && 0.022 && 0.021\\
\hline
\end{tabular}
\end{table}

 \noindent Tables \ref{model11} and  \ref{model2} report the empirical
 frequency of our estimator $\hat{k}=1, 2, 3 , 4$ in Model 1 and Model 2, respectively, where the true number of signals is $k=3$. Also, Figure \ref{codiag} shows more clearly the trends of the frequency of correct estimation in both cases. We can see the frequency both  increase as $p$ gets larger, which confirms the consistency of our estimator.
\begin{figure}[htfp]
  \centering
  \vspace{1cm}
  \mbox{
  \includegraphics[width=0.5\linewidth]{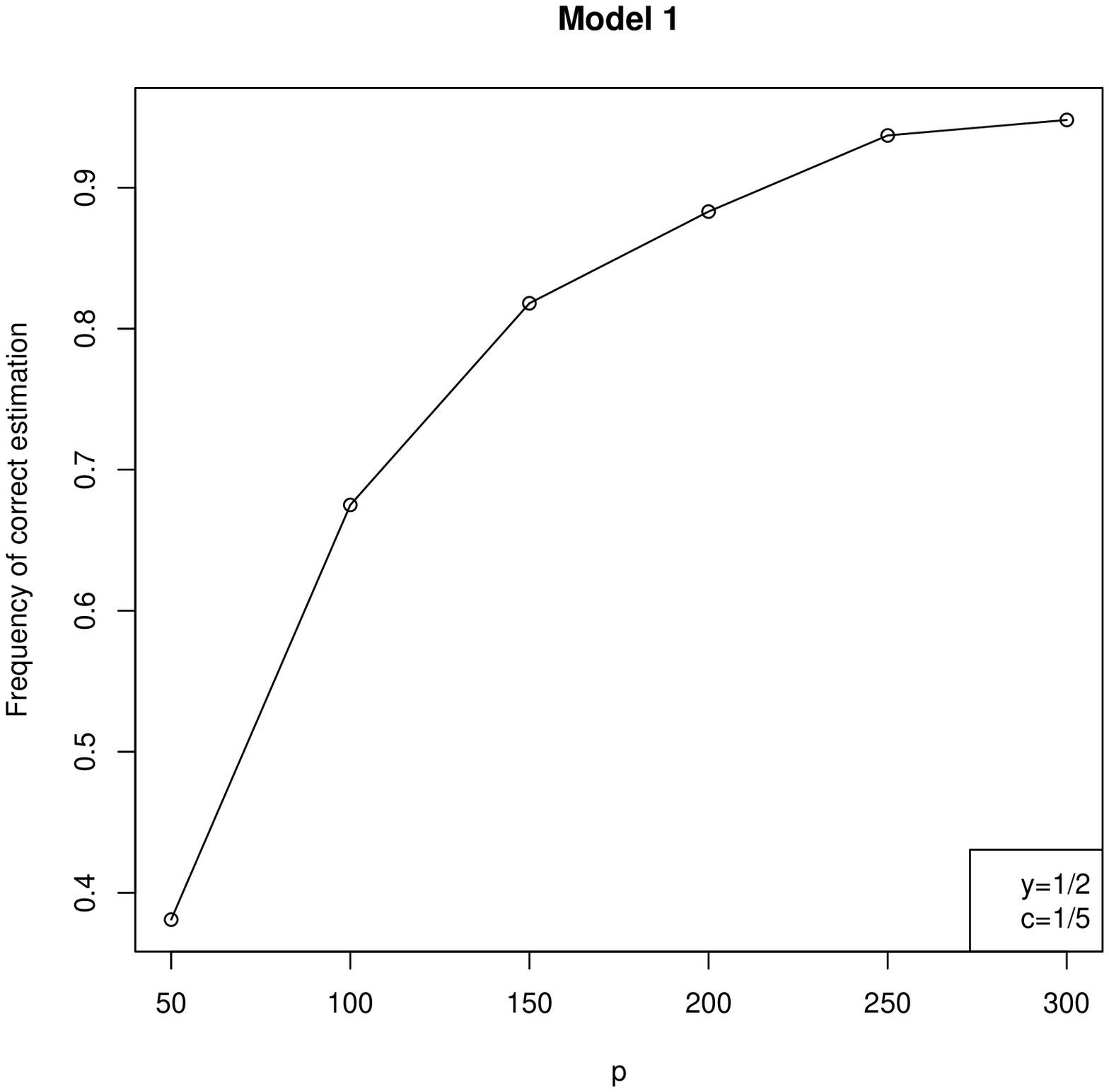}
  \includegraphics[width=0.5\linewidth]{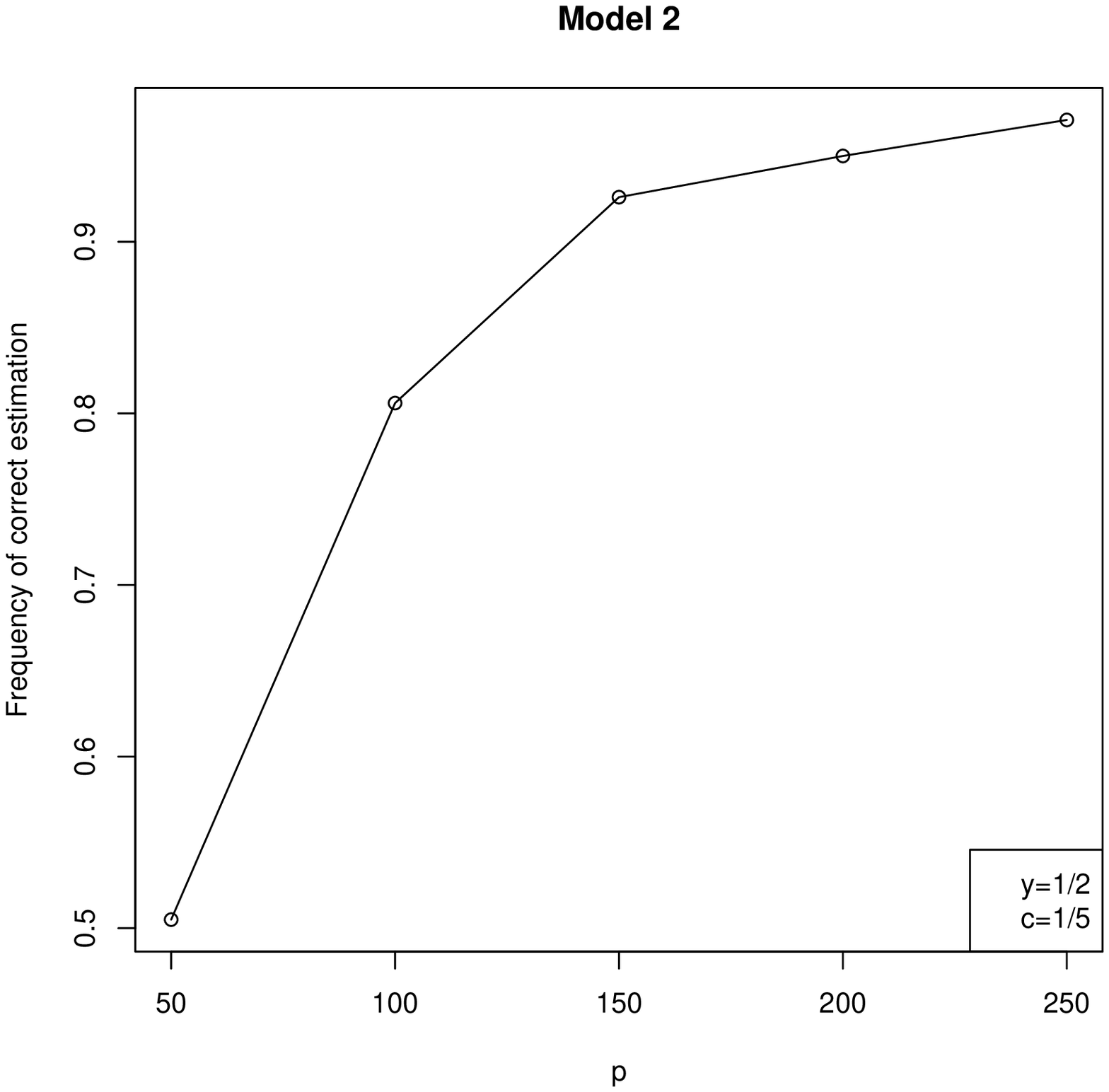}
  }
  \caption{Frequency of true estimation $\hat k=k=3$.
  \label{codiag}}
\end{figure}

\appendix
\section{Some lemmas}
\label{sec:lemmas}

\begin{lemma}\label{unique}
Let R be a $M\times M$ real-valued matrix,  $U=\begin{pmatrix}
                                                    U_1 & \cdots & U_k \\
                                                  \end{pmatrix}
$ and $V=\begin{pmatrix}
                                                    V_1 & \cdots & V_k \\
                                                  \end{pmatrix}
$ are two orthogonal bases of some subspace $E\subseteq \R^M$ of dimension $M$, where both $U_i$ and $V_i$ are of size $M \times n_i$, satisfying $n_1+\cdots n_k=M$. Then the two $n_i \times n_i$ matrices $U^*_iRU_i$ and $V^*_iRV_i$ have the same eigenvalues.
\end{lemma}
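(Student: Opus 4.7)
(of Lemma \ref{unique})
The plan is to reduce the assertion to an orthogonal similarity transformation between $U_i^*RU_i$ and $V_i^*RV_i$, from which the equality of spectra follows immediately.

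First, I would observe that for each $i$, the columns of $U_i$ and of $V_i$ form orthonormal bases of the same $n_i$-dimensional subspace $E_i\subseteq\mathbb{R}^M$, namely the eigenspace of $\Omega_M$ associated with the spike $a_i$ (this is what the statement really requires; the blockwise orthogonality of the full bases $U$ and $V$ implies the orthogonal direct sum decomposition $E=E_1\oplus\cdots\oplus E_k$, and the $i$-th block of each basis spans $E_i$). Since both $U_i$ and $V_i$ are $M\times n_i$ matrices with orthonormal columns spanning the same $n_i$-dimensional subspace $E_i$, standard linear algebra yields an $n_i\times n_i$ orthogonal matrix $O_i$ such that
\begin{equation*}
V_i = U_i O_i, \qquad O_i^* O_i = O_i O_i^* = I_{n_i}.
\end{equation*}

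Next, substituting this identity directly into the quadratic form gives
\begin{equation*}
V_i^* R V_i = (U_i O_i)^* R (U_i O_i) = O_i^* (U_i^* R U_i) O_i.
\end{equation*}
Thus $V_i^*RV_i$ is obtained from $U_i^*RU_i$ by an orthogonal (hence invertible) similarity transformation. Similar matrices share the same characteristic polynomial and therefore the same eigenvalues, which establishes the claim.

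There is essentially no technical obstacle here; the only point that warrants care is the preliminary observation that $U_i$ and $V_i$ span the same subspace $E_i$. Once the change-of-basis matrix $O_i$ has been identified as orthogonal, the conclusion is a one-line consequence of invariance of the spectrum under orthogonal similarity.
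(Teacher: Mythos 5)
Your proof is correct and takes essentially the same approach as the paper: both reduce the claim to the existence of an $n_i\times n_i$ orthogonal change-of-basis matrix $O_i$ (called $A$ in the paper) with $V_i=U_iO_i$, from which $V_i^*RV_i=O_i^*(U_i^*RU_i)O_i$ and equality of spectra follow by orthogonal similarity. The only difference is cosmetic: the paper constructs $A$ explicitly by expanding each column of $V_i$ in the orthonormal basis given by the columns of $U_i$ and then checks $\langle v_l,v_s\rangle=\langle a_{\cdot l},a_{\cdot s}\rangle$ to conclude orthogonality, whereas you invoke the standard fact that two orthonormal bases of the same subspace differ by an orthogonal matrix. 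You also correctly flag that the lemma as stated does not literally guarantee that $U_i$ and $V_i$ span the same subspace $E_i$; this is implicitly assumed in the paper's proof as well (the expansion $v_j=\sum_l a_{lj}u_l$ presupposes $v_j\in\mathrm{span}(U_i)$), and in the intended application it holds because $U_i$ and $V_i$ both span the eigenspace of $\Omega_M$ for the spike $a_i$.
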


\begin{proof}(of Lemma \ref{unique})
It is sufficient to prove that there exists a $n_i \times n_i$ orthogonal matrix $A$, such that $V_i=U_i\cdot A$. If it is true, then
$V^*_iRV_i=A^*(U^*_iRU_i)A$, and since $A$ is orthogonal, we have the eigenvalues of $V^*_iRV_i$ and $U^*_iRU_i$ are the same.
Now let $U_i=\begin{pmatrix}
                  u_1 & \cdots & u_{n_i}\\
                \end{pmatrix}
$ and
$V_i=\begin{pmatrix}
                  v_1 & \cdots & v_{n_i}\\
                \end{pmatrix}.
$
Define $A=(a_{ls})_{1\leq l,s \leq n_i}$, such that
\begin{align*}
\left\{\begin{array}{c}
v_1=a_{11}u_1+\cdots a_{n_i\,1}u_{n_i}\\
\vdots\\
v_{n_i}=a_{1n_i}u_1+\cdots a_{n_i\,n_i}u_{n_i}
\end{array}\right.~.
\end{align*}
Put in matrix form:
\begin{align*}
\begin{pmatrix}
  v_1 & \cdots & v_{n_i} \\
\end{pmatrix}=\begin{pmatrix}
  u_1 & \cdots & u_{n_i} \\
\end{pmatrix}\begin{pmatrix}
               a_{11} & \cdots & a_{1\,n_i} \\
               \vdots & \ddots & \vdots \\
               0 & \cdots & a_{n_i\,n_i} \\
             \end{pmatrix}~,
\end{align*}
i.e. $V_i=U_i\cdot A$.
Since $<v_i, v_j>~=~<a_{\cdot i}, a_{\cdot j}>$ by orthogonality of $\{u_j\}$, where $a_{\cdot k}=(a_{lk})_{1\leq k \leq n_i}$, therefore, the matrix $A$ is orthogonal.
\end{proof}

\begin{lemma}\label{f1}
  Suppose $X=(x_1, \cdots, x_n)$ is a  $p \times n$ matrix, whose columns $\{x_i\}$ are independent random vectors. $Y=(y_1, \cdots, y_n)$ is also similarly defined. Let $\Sigma_p$ be the covariance matrix of $x_i$ and $y_i$,  $A$ is a deterministic matrix, then we have
  \begin{align*}
    XAY^{*}\longrightarrow \tr A \cdot \Sigma_p~.
  \end{align*}
  Moreover, if A is random but independent of $X$ and $Y$, then we have
  \begin{align}\label{ra}
    XAY^{*}\longrightarrow \e\tr A \cdot \Sigma_p~.
  \end{align}
\end{lemma}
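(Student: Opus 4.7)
The plan is to prove Lemma \ref{f1} by a standard bilinear-form concentration argument, working entrywise on the $p\times p$ matrix $XAY^*$. First I would expand
\[
XAY^* \;=\; \sum_{i,j=1}^{n} A_{ij}\, x_i y_j^*,
\]
and split the sum into its diagonal ($i=j$) and off-diagonal ($i\neq j$) parts. Under the hypothesis that the columns of $X$ and of $Y$ are independent across indices and $x_i,y_i$ have the ``cross-covariance'' $\E[x_i y_i^*]=\Sigma_p$ (with $\E[x_i y_j^*]=0$ for $i\neq j$, which is the way the lemma is actually used in \eqref{r1}--\eqref{p4}), the diagonal contribution has expectation
\[
\E\sum_{i=1}^{n} A_{ii}\, x_i y_i^* \;=\; \Big(\sum_{i=1}^{n}A_{ii}\Big)\Sigma_p \;=\; \tr A\cdot \Sigma_p,
\]
while the off-diagonal contribution is centred. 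This identifies the claimed limit and reduces the lemma to a fluctuation estimate around the mean.

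The main step is then controlling those fluctuations. For the diagonal sum I would apply a strong law of large numbers entrywise to the i.i.d.\ matrix sequence $x_i y_i^* - \Sigma_p$ weighted by the bounded scalars $A_{ii}$; finite second moments on the entries of $x_i,y_i$ suffice, and in the intended applications (cf.\ the $\tfrac{1}{n}Z_1A(\lambda_i)Z_1^*$ terms of Section~\ref{sec:CLT}) there is a built-in $1/n$ normalization through the trace that makes this a textbook LLN. For the off-diagonal sum I would compute the second moment of each of its $(r,s)$ entries directly: only pairs $(i,j)=(i',j')$ or $(i,j)=(j',i')$ survive the expectation, so the variance is of order $\|A\|_F^2$ times a fourth-moment constant of the coordinates of $x_i,y_j$; dividing by the appropriate normalisation kills this term.

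For the random-$A$ case I would simply condition on $A$: since $A$ is independent of $(X,Y)$ the deterministic case applied conditionally yields $\E[XAY^*\mid A]=\tr A\cdot \Sigma_p$ together with a conditional variance bound, and then Fubini converts $\tr A$ into $\E\tr A$ in the limit \eqref{ra}. The only real obstacle is bookkeeping: the lemma is stated without explicit normalisation or explicit mode of convergence, so one must track the hidden $1/n$ (or $1/T$) factor inside the matrix $A$ at each invocation and verify that the moment hypotheses of the paper (existence of second moments for the universality part and of the fourth moment $v_4$ for the CLT part) are strong enough to justify the corresponding almost-sure or in-probability convergence. Once the normalisation is fixed, the proof reduces to classical quadratic-form concentration estimates (in the style of Bai--Silverstein) and no new idea is required.
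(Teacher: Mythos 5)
Your proposal follows essentially the same entrywise decomposition the paper uses (the $k=l$ diagonal sum gives $\tr A\cdot\Sigma_p$, the $k\ne l$ off-diagonal sum is controlled by a variance bound, and conditioning on $A$ handles the random case); in fact you are more careful than the paper's own proof, which simply asserts ``$X_{ik}Y_{jl}\rightarrow \Sigma_p(i,j)$ when $k=l$'' and drops the off-diagonal terms without comment. Your remark that the statement is only meaningful once one fixes the implicit $1/n$ (or $1/T$) normalization carried inside the matrix $A$ at each invocation --- e.g.\ $A=\tfrac{1}{n}\bigl(I_n-\cdots\bigr)$ in the term $(\mathrm{I})$ of the proof of Theorem~\ref{mainth1} --- is also correct and worth keeping in mind when the lemma is applied.
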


\begin{proof}
  We consider the $(i,j)$-th entry of $XAY^{*}$:
  \begin{align}\label{xax}
    XAY^{*}(i,j)=\sum_{k,l=1}^n X(i,k)A(k,l)Y^{*}(l,j)=\sum_{k,l=1}^n X_{ik}Y_{jl}A_{kl}~.
  \end{align}
  Since $X_{ik}Y_{jl}\rightarrow \Sigma_p(i,j)$ when $k=l$. Therefore, the right hand side of \eqref{xax} tends to $\Sigma_p(i,j)\cdot\sum_{k=1}^nA_{kk}$, which is equivalent to say that
  \begin{align*}
    XAY^*\rightarrow \tr A\cdot\Sigma_p~.
  \end{align*}
  Then \eqref{ra} is simply due to the conditional expectation.
  The proof of Lemma \ref{f1} is complete.
\end{proof}

\smallskip
\noindent In all the following, $\lambda$ refers to the outlier limit that
\[
\lambda=\frac{a(a-1+c)}{a-1-ay}~.
\]

\begin{lemma}\label{five}
  We have
  \begin{align*}
    &s(\lambda)=\frac{a(y-1)+1}{(a-1)(a+c-1)}~,\\
    &m_1(\lambda)=\frac{(a(y-1)+1)^2(-1+2a+a^2(y-1)+y(c-1))}{(a-1)^2(a+c-1)^2(-1+2a+c+a^2(y-1))}~,\\
    &m_2(\lambda)=\frac{1}{a-1}~,\\
    &m_3(\lambda)=\frac{-(a(y-1)+1)^2}{(a-1)^2(-1+2a+c+a^2(y-1))}~,\\
    &m_4(\lambda)=\frac{-1+2a+c+a^2(-1+c(y-1))}{(a-1)^2(-1+2a+c+a^2(y-1))}~.
  \end{align*}
\end{lemma}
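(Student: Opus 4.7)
The plan is to reduce every quantity to $s(\lambda)$ and $m_1(\lambda)=s'(\lambda)$, and to obtain the latter two without ever touching the explicit square-root formula \eqref{st}. The three remaining integrals $m_2,m_3,m_4$ are elementary once $s$ and $m_1$ are known: writing $x=z-(z-x)$ under the sign of integration against $F_{c,y}$ gives the algebraic identities
\begin{align*}
m_2(z)&=-1-zs(z),\\
m_3(z)&=s(z)+zm_1(z),\\
m_4(z)&=1+2zs(z)+z^2m_1(z),
\end{align*}
so Steps~2--4 below will reduce to clean substitutions.

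First, the evaluation of $s(\lambda)$ is essentially done inside the proof of Theorem~\ref{mainth1}. Equation \eqref{st1} yields $s(\lambda)=(a(1-c)-\lambda)/(\lambda(y\lambda+ac))$, and substituting $\lambda=\phi(a)=a(a+c-1)/(a-1-ay)$ produces the factorisations $a(1-c)-\lambda=-a^2(c+y-cy)/(a-1-ay)$ and $y\lambda+ac=a(a-1)(c+y-cy)/(a-1-ay)$. After cancellation one recovers $s(\lambda)=(a(y-1)+1)/[(a-1)(a+c-1)]$, and hence $\lambda s(\lambda)=-a/(a-1)$, which immediately yields $m_2(\lambda)=1/(a-1)$ from the identity above.

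For $m_1(\lambda)$ I would avoid differentiating \eqref{st} directly and instead use the relation $s(\phi(a))=(1+a(y-1))/[(a-1)(a+c-1)]$ obtained above. Writing $g(a)$ for this right-hand side and applying the chain rule gives $m_1(\lambda)=s'(\lambda)=g'(a)/\phi'(a)$. A direct quotient-rule computation yields
\[
g'(a)=\frac{-\bigl(-1+2a+a^2(y-1)+y(c-1)\bigr)}{(a-1)^2(a+c-1)^2}.
\]
For $\phi'(a)$, using $\gamma=1/(1-y)$ and the algebraic identity $a^2-2a\gamma+\gamma(1-c)=-\gamma\bigl(-1+2a+c+a^2(y-1)\bigr)$ together with $(a-\gamma)^2=(a-1-ay)^2/(1-y)^2$, one obtains $\phi'(a)=-\bigl(-1+2a+c+a^2(y-1)\bigr)/(a-1-ay)^2$. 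Since $(a-1-ay)^2=(1+a(y-1))^2$, forming the ratio $g'(a)/\phi'(a)$ gives precisely the announced expression for $m_1(\lambda)$.

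The formulas for $m_3(\lambda)$ and $m_4(\lambda)$ then follow from the identities above by plugging in $s(\lambda)$, $m_1(\lambda)$ and $\lambda$. The main obstacle is the algebraic simplification: the numerator of $m_3(\lambda)=s(\lambda)+\lambda m_1(\lambda)$ collapses only after spotting that
\[
(a-1)\bigl(-1+2a+c+a^2(y-1)\bigr)-a\bigl(-1+2a+a^2(y-1)+y(c-1)\bigr)=-(a+c-1)(1+a(y-1)),
\]
which uses the grouping $(a-1)(a+c-1)-ay(a+c-1)=(a+c-1)(a-1-ay)$; and for $m_4(\lambda)=1+2\lambda s(\lambda)+\lambda^2 m_1(\lambda)$ the three summands combine via the pleasant cancellation $(a-1)^2-2a(a-1)=-(a-1)(a+1)$, reducing the numerator to $-1+2a+c+a^2(-1+c(y-1))$ as claimed. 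No new probabilistic input is required: everything is polynomial bookkeeping starting from \eqref{st1}.
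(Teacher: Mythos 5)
Your proof is correct, and it takes a route that is genuinely (if mildly) different from the paper's. The paper works through the companion Stieltjes transform $\underline{s}$: it reads off $\underline{s}(\lambda)$, implicitly differentiates the quadratic equation \eqref{us} that $\underline{s}$ satisfies to get $\underline{s}'(\lambda)$, and then transfers to $s,s'$ through the relation $\underline{s}(z)+(1-c)/z=cs(z)$. You instead stay entirely with $s$: you extract $s(\lambda)$ directly from the determinantal condition \eqref{st1}, observe that the parametrisation $\lambda=\phi(a)$ turns this into the identity $s(\phi(a))=g(a)$ on the supercritical range, and differentiate by the chain rule $m_1(\lambda)=s'(\lambda)=g'(a)/\phi'(a)$. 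Both approaches avoid differentiating the explicit square-root formula \eqref{st}, which is the main source of error here. Your version has the advantage of skipping the $\underline{s}$ detour and the implicit differentiation of a quadratic (and, incidentally, of not tripping over the paper's own misprint in \eqref{ds}, where $\underline{s}(\lambda)$ is written with the value that actually belongs to $s(\lambda)$; the correct value is $\underline{s}(\lambda)=(1+a(y-1))/[a(a-1)]$). You also make explicit the reduction identities $m_2=-1-zs$, $m_3=s+zm_1$, $m_4=1+2zs+z^2m_1$, which the paper only gestures at with ``a combination of $s(\lambda)$ and $s'(\lambda)$''. All of the factorisations you invoke — $a(1-c)-\lambda$, $y\lambda+ac$, the simplifications of $g'$ and $\phi'$, and the two cancellations in the numerators of $m_3$ and $m_4$ — check out by direct expansion.
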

\begin{proof}(sketch of the proof of Lemma \ref{five})
  In this short proof, we skip all the detailed calculations.
  Recall the definition of $\underline{s}(z)$ in \eqref{stss}, its value at $\lambda$ is
  \begin{align}\label{ds}
    \underline{s}(\lambda)=\frac{a(y-1)+1}{(a-1)(a+c-1)}~.
  \end{align}
  Also,  \eqref{stss} says that $\underline{s}(z)$ is the solution of the following equation:
  \begin{align}\label{us}
    z(c+zy)\underline{s}^2(z)+(c(z(1-y)+1-c)+2zy)\underline{s}(z)+c+y-cy=0~.
  \end{align}
  Taking derivatives on both sides of \eqref{us} and combing with \eqref{ds} will give the value of $\underline{s}'(\lambda)$. On the other hand, since it holds
  \begin{align}
    \underline{s}(z)+\frac 1z (1-c)=cs(z)~,
  \end{align}
  see \eqref{est3}, taking derivatives on both sides again will give the value of $s'(\lambda)$. Finally, the above five values is just a combination of $s(\lambda)$ and $s'(\lambda)$.

  \noindent The proof of Lemma \ref{five} is complete.
\end{proof}

\begin{lemma}\label{l1}
  Under assumptions (a)-(d),
  \[
  \frac 1p\tr \left\{\Big(\lambda\cdot \frac 1n Z_{2}Z^*_{2}-\frac 1T X_2X^*_2\Big)^{-1} \right\}\overset{a.s.}{\longrightarrow} \frac{1}{a+c-1}~.
  \]
\end{lemma}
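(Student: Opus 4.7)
The plan is to obtain the limit $g := \lim_{p\to\infty} \frac{1}{p}\tr[M^{-1}]$, with $M = \lambda A - B$, $A = \frac{1}{n} Z_2 Z_2^*$ and $B = \frac{1}{T} X_2 X_2^*$, by writing down two expressions for the auxiliary quantity $\frac{1}{p}\tr[M^{-1}A]$: an exact one via the spike-free Fisher matrix $F = A^{-1}B$ (whose LSD is already known), and an asymptotic one via a Sherman--Morrison expansion of the factor $A$. Equating the two produces a single linear equation for $g$, which can then be reduced to $1/(a+c-1)$ using Lemma~\ref{five}.

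First, the factorisation $M = A(\lambda I - F)$ gives the algebraic identity $M^{-1}A = (\lambda I - F)^{-1}$, so that
\begin{equation*}
\frac{1}{p}\tr[M^{-1}A] = \frac{1}{p}\sum_{j=1}^p \frac{1}{\lambda - l_{p,j}} \longrightarrow \int \frac{dF_{c,y}(x)}{\lambda - x} = -s(\lambda),
\end{equation*}
by Proposition~\ref{LSD} applied to the spike-free Fisher matrix $F$, whose LSD is still $F_{c,y}$ as $M$ is a fixed-rank perturbation. The few outlier eigenvalues that actually approach $\lambda = \phi(a)$ contribute only $O(p^{-1/2})$ to the normalised sum, thanks to the $\sqrt{p}$-scale fluctuation given in Theorem~\ref{mainth2}, and therefore do not affect the limit.

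Second, writing $A = \frac{1}{n}\sum_j z_j z_j^*$ and setting $M_j := M - \frac{\lambda}{n} z_j z_j^*$ (independent of $z_j$), the Sherman--Morrison identity gives $z_j^* M^{-1} z_j = z_j^* M_j^{-1} z_j / (1 + \frac{\lambda}{n} z_j^* M_j^{-1} z_j)$. The classical trace lemma for quadratic forms in i.i.d.\ entries yields $\frac{1}{p} z_j^* M_j^{-1} z_j = \frac{1}{p}\tr(M_j^{-1}) + o(1)$ almost surely, while a direct Sherman--Morrison identity furnishes the rank-one estimate $|\tr M^{-1} - \tr M_j^{-1}| = O(1)$ (valid because $\lambda$ is separated from the support $[b_1,b]$, so $\|M^{-1}\|_{\mathrm{op}}$ stays bounded). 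Hence $\frac{1}{p} z_j^* M_j^{-1} z_j = g_p + o(1)$, where $g_p := \frac{1}{p}\tr[M^{-1}]$. Using $p/n \to y$, averaging over $j$ gives
\begin{equation*}
\frac{1}{p}\tr[M^{-1}A] = \frac{1}{pn}\sum_{j=1}^n z_j^* M^{-1} z_j \longrightarrow \frac{g}{1 + \lambda y g},
\end{equation*}
for any subsequential limit $g$ of $g_p$.

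Equating the two expressions yields the \emph{linear} equation $g/(1+\lambda y g) = -s(\lambda)$, with unique solution $g = -s(\lambda)/(1 + \lambda y s(\lambda))$. Substituting $s(\lambda) = [a(y-1)+1]/[(a-1)(a+c-1)]$ from Lemma~\ref{five} and computing $\lambda y s(\lambda) = -ay/(a-1)$, so that $1 + \lambda y s(\lambda) = (a-1-ay)/(a-1)$, one obtains $g = 1/(a+c-1)$, as required. The main technical work lies in Step~2: the almost-sure concentration $z_j^* M_j^{-1} z_j \approx \tr M_j^{-1}$ and the almost-sure boundedness of $g_p$. Both are standard under the paper's moment conditions once one uses that the operator-norm control $\|M^{-1}\|_{\mathrm{op}} = O(1)$ a.s.\ is inherited from the separation of $\lambda$ from $[b_1,b]$, the LSD support of $F$; these are exactly the same types of trace approximations already invoked in the proofs of Theorems~\ref{mainth1} and~\ref{mainth2} through Lemma~\ref{f1}.
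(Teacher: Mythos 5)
Your proof is correct, and it takes a genuinely different and arguably more self-contained route than the paper. The paper appeals to an external result (Lemma~4.3 of Zheng, Bai and Yao, cited as \cite{zheng13}) on the Stieltjes transform of $\frac1z\cdot\frac1n Z_2Z_2^*-\frac1T X_2X_2^*$, solves the resulting self-consistent equation explicitly, and then evaluates at $z=1/\lambda$. You instead exploit the factorisation $M=A(\lambda I-F)$ with $A=\frac1n Z_2Z_2^*$, $F=A^{-1}\frac1T X_2X_2^*$, which reduces $\tfrac1p\tr[M^{-1}A]$ to $-s(\lambda)$ directly from the known LSD $F_{c,y}$; then a Sherman--Morrison/trace-lemma expansion of the same quantity gives $g/(1+\lambda y g)$, yielding the linear equation $g/(1+\lambda y g)=-s(\lambda)$. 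Your algebraic reduction to $1/(a+c-1)$ via Lemma~\ref{five} is also correct. The remaining concentration and rank-one-perturbation estimates you flag as ``standard'' are indeed the same kind of arguments invoked elsewhere in the paper, so this is a fair place to leave them.

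One small imprecision you should fix: the matrix $F=(\frac1n Z_2Z_2^*)^{-1}\frac1T X_2X_2^*$ appearing in your factorisation is built only from the \emph{spike-free} blocks $Z_2,X_2$, so it has \emph{no} outlier eigenvalues at all; all its eigenvalues lie asymptotically in $[b_1,b]$ and the sum $\frac1p\sum_j(\lambda-l_j)^{-1}$ converges to $-s(\lambda)$ without any correction. Your remark about ``outlier eigenvalues approaching $\lambda=\phi(a)$'' appears to confuse $F$ with the spiked Fisher matrix $S_2^{-1}S_1$ (whose eigenvalues the paper also denotes $l_{p,j}$); the remark is harmless here but unnecessary, and keeping the notation distinct would avoid the apparent conflation. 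Relatedly, $F$ is $(p-M)\times(p-M)$, so the normalisation should be $1/(p-M)$ rather than $1/p$, though this changes nothing in the limit since $M$ is fixed.
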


\begin{proof}(of Lemma \ref{l1})
  We first fix $\frac 1n Z_{2}Z^*_{2}$, then
  we can use the result in \cite{zheng13} (Lemma 4.3), which says that
  \begin{align*}
    \frac 1p \tr\left(\frac 1z \cdot\frac 1n Z_{2}Z^*_{2}-\frac 1T X_2X^*_2\right)^{-1}\rightarrow \tilde{m}(z),\quad a.s.
  \end{align*}
  where $\tilde{m}(z)$ is the unique solution to the equation
  \begin{align}\label{ea1}
    \tilde{m}(z)=\int \frac{1}{\frac{x}{z}-\frac{1}{1-c\tilde{m}(z)}}dF_y(x)
  \end{align}
  satisfying
  \[
  \Im(z)\cdot\Im(\tilde{m}(z))\geq 0~,
  \]
  here, $F_y(x)$ is the LSD of  $\frac 1n Z_{2}Z^*_{2}$ (deterministic), which is the standard M-P law with parameter $y$. Besides, if we denote its Stieltjes transform as $s(z):=\int \frac{1}{x-z}dF_y(x)$, then \eqref{ea1} could be written as
  \begin{align}\label{ms}
    \tilde{m}(z)=\int \frac{z}{x-\frac{z}{1-c\tilde{m}(z)}}dF_y(x)=z\cdot s\left(\frac{z}{1-c\tilde{m}(z)}\right)~.
  \end{align}
  Since we know that the Stieltjes transform of the LSD of a standard sample covariance matrix satisfies:
  \begin{align}\label{ss}
    s(z)=\frac{1}{1-y-yzs(z)-z}~,
  \end{align}
  then we bring \eqref{ms} into \eqref{ss} leads to
  \begin{align*}
    \frac{\tilde{m}(z)}{z}=\frac{1}{1-y-y\cdot\frac{z}{1-c\tilde{m}(z)}\cdot\frac{\tilde{m}(z)}{z}-\frac{z}{1-c\tilde{m}(z)}}~,
  \end{align*}
  whose nonnegative solution is unique, which is
  \begin{align}\label{em1}
    \tilde{m}(z)=\frac{-1+y+z-zc+\sqrt{(1-y-z+zc)^2+4z(yc-y-c)}}{2(yc-y-c)}~.
  \end{align}
  Therefore, we have for fixed $\frac 1n Z_{2}Z^*_{2}$,
  \begin{align*}
    \frac 1p \tr \Big(\lambda\cdot \frac 1n Z_{2}Z^*_{2}-\frac 1T X_2X^*_2\Big)^{-1}\rightarrow \tilde{m}\left(\frac{1}{\lambda}\right)=\frac{1}{a+c-1}~
  \end{align*}
  almost surely.
  Finally, due to the fact that for each $\omega$, the ESD of $\frac 1n Z_{2}Z^*_{2}(\omega)$ will tend to the same limit (standard M-P distribution), which is independent of the choice of $\omega$. Therefore, we have for all $\frac 1n Z_{2}Z^*_{2}$ (not necessarily deterministic but independent of $\frac 1T X_2X^*_2$),
  \begin{align*}
    \frac 1p \tr \Big(\lambda\cdot \frac 1n Z_{2}Z^*_{2}-\frac 1T X_2X^*_2\Big)^{-1}\rightarrow \frac{1}{a+c-1}~
  \end{align*}
  almost surely.

  \noindent The proof of Lemma \ref{l1} is complete.
\end{proof}

\begin{lemma}\label{asl}
  $A(\lambda)$, $B(\lambda)$, $C(\lambda)$ and $D(\lambda)$ are defined in \eqref{abcd}, 
  then
  \begin{align}
    &(l-\lambda)\cdot\frac 1n Z_1A(l)Z^*_1\rightarrow (l-\lambda)\cdot(1+y\lambda s(\lambda))\cdot I_M~,\label{x1}\\
    & \frac {\lambda}{n} Z_1[A(l)-A(\lambda)]Z^*_1\rightarrow (l-\lambda)\cdot(\lambda y s(\lambda)+\lambda^2 y m_1(\lambda))\cdot I_M~,\label{x2}\\
    &\frac 1TX_1(B(l)-B(\lambda))X^*_1\rightarrow -(l-\lambda)\cdot cm_3(\lambda)\cdot U \left(\begin{array}{c}
        a_1 \quad\quad\\
        \quad\ddots \quad\\
        \quad \quad a_k
      \end{array}\right)U^{*}~,\label{x3}\\
    &\frac ln Z_1(C(l)-C(\lambda))X^*_1+\frac {l-\lambda}{n} Z_1C(\lambda)X^*_1\rightarrow (l-\lambda)\cdot {\bf 0}_{M \times M}~,\label{x4}\\
    &\frac lT X_1(D(l)-D(\lambda))Z^*_1+\frac {l-\lambda}{T} X_1D(\lambda)Z^*_1\rightarrow (l-\lambda)\cdot {\bf 0}_{M \times M}~.\label{x5}
  \end{align}
\end{lemma}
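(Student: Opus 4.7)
The plan is to reduce every one of the five displayed limits to a direct application of Lemma \ref{f1}, the standard ``sandwich'' trick for quadratic/sesquilinear forms built from independent vectors, and then to identify the resulting scalar traces with the integrals $s(\lambda),m_1(\lambda),m_3(\lambda)$ defined in \eqref{smm}. The crucial structural observation that makes this uniform is that the ``kernel'' matrices $A(\lambda),B(\lambda),C(\lambda),D(\lambda)$ are all built from $(Z_2,X_2)$, whereas the outer rows come from $(Z_1,X_1)$; since the four blocks are mutually independent and, moreover, $Z_1\perp X_1$, Lemma \ref{f1} applies by conditioning on $(Z_2,X_2)$.

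For \eqref{x1}, I would apply Lemma \ref{f1} to $\frac1n Z_1 A(l) Z_1^*$ with $\cov(z_1)=I_M$, obtaining the deterministic equivalent $\frac1n\tr A(l)\cdot I_M$; the trace simplifies by the identity $S_2^{-1}\cdot \frac1n Z_2 Z_2^*=I_{p-M}$ (using a slight abuse of writing $\lambda I_p$ for $\lambda I_{p-M}$ as in Step 1 of Theorem~\ref{mainth1}) and then the limit $\frac{1}{p-M}\tr (\lambda I-S)^{-1}\to -s(\lambda)$ yields the target $1+y\lambda s(\lambda)$. For \eqref{x2} and \eqref{x3} I would first extract the factor $(l-\lambda)$ via a first-order Taylor expansion $A(l)-A(\lambda)=(l-\lambda)A'(\lambda)+o(l-\lambda)$ (and similarly for $B$), so that the problem reduces to computing $\frac{\lambda}{n}\tr A'(\lambda)$ and $\frac1T\tr B'(\lambda)$. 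A direct differentiation of the resolvent formula $\frac{d}{dl}(lI-S)^{-1}=-(lI-S)^{-2}$ gives
\begin{align*}
\tfrac{\lambda}{n}\tr A'(\lambda) &\longrightarrow y s(\lambda)+\lambda y m_1(\lambda),\\
\tfrac1T\tr B'(\lambda) &\longrightarrow -c\, m_3(\lambda),
\end{align*}
using $\frac{1}{p-M}\tr(\lambda I-S)^{-2}\to m_1(\lambda)$ and $\frac{1}{p-M}\tr S(\lambda I-S)^{-2}\to m_3(\lambda)$; combining with Lemma \ref{f1} ($\cov(x_1)=U\diag(a_1,\ldots,a_k)U^*$) produces precisely the right-hand sides of \eqref{x2} and \eqref{x3}.

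For \eqref{x4} and \eqref{x5} the situation is easier: the outer vectors come from the \emph{two} independent blocks $Z_1$ and $X_1$, so the natural generalisation of Lemma \ref{f1} (applied conditionally on $(Z_2,X_2)$) produces a factor $\cov(z_1,x_1)={\bf 0}_{M\times M}$ exactly as in the $(\romannumeral 3)$--$(\romannumeral 4)$ step of the proof of Theorem \ref{mainth1}. Hence $\frac1n Z_1 C(\lambda)X_1^*\to0$ and the Taylor-expanded term $\frac{l}{n}Z_1(C(l)-C(\lambda))X_1^*\approx\frac{l(l-\lambda)}{n}Z_1 C'(\lambda)X_1^*$ is of smaller order; the same argument works for $D$.

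The main obstacle I anticipate is not algebraic but regulatory: the Taylor remainder in $A(l)-A(\lambda)=(l-\lambda)A'(\lambda)+o(l-\lambda)$ must be controlled uniformly in a neighbourhood of $\lambda$, since the resolvent $(lI-S)^{-1}$ is random and a priori only bounded with high probability thanks to the separation $\lambda\notin[b_1,b]$. This can be handled by invoking the almost-sure convergence $l_{p,j}\to\lambda$ from Theorem~\ref{mainth1} together with the no-eigenvalue-outside-the-support property of $S$ (which follows for Fisher matrices from \cite{zheng13}); this guarantees that for all $p$ large enough the resolvents $(l_{p,j}I-S)^{-1}$ and $(\lambda I -S)^{-1}$ have uniformly bounded operator norm and the mean-value-theorem bound on $A(l)-A(\lambda)$ is valid with the stated error.
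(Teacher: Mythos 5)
Your plan is correct and mirrors the paper's proof almost exactly: apply Lemma~\ref{f1} by conditioning on $(Z_2,X_2)$, evaluate the resulting normalized traces as integrals against $F_{c,y}$, and use $\cov(Z_1,X_1)=\mathbf{0}_{M\times M}$ to dispatch \eqref{x4}--\eqref{x5}. The one cosmetic difference is that the paper does not Taylor-expand $A(l)-A(\lambda)$ with a remainder; it factors out $(l-\lambda)$ exactly via the resolvent identity $(\lambda I-S)^{-1}-(lI-S)^{-1}=(l-\lambda)(\lambda I-S)^{-1}(lI-S)^{-1}$ and then lets $l\to\lambda$ in the remaining trace, which needs the same uniform resolvent boundedness you invoke --- so your regulatory concern is real but handled by the same separation argument the paper leaves implicit.
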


\begin{proof}(of Lemma \ref{asl})

  \noindent {\bf Proof of \eqref{x1}:}
  Since $Z_1$ is independent of $A$ and $\cov(Z_1)=I_M$, we combine this fact with Lemma \ref{f1}:
  \begin{align}\label{al}
    (l-\lambda)\cdot\frac 1n Z_1A(l)Z^*_1\rightarrow(l-\lambda)\cdot \frac 1n\e\tr A(l)\cdot I_M~.
  \end{align}
  Considering the expression of $A(l)$, we have
  \begin{align*}
    \frac 1n\e \tr A(\lambda)&=\frac 1n \e \tr \left[I_n-Z^*_2(\lambda I_p-S)^{-1}\Big(\frac 1n Z_2Z^*_2\Big)^{-1}\frac{\lambda}{n}Z_2\right] \nonumber\\
    &=1-\frac{\lambda}{n}\e \tr (\lambda I_p-S)^{-1}\nonumber\\
    &=1-y\lambda\int \frac{1}{\lambda-x}dF_{c,y}(x)\nonumber\\
    &=1+y\lambda s(\lambda)~.
  \end{align*}
  Therefore, combine with \eqref{al}, we have
  \begin{align*}
    (l-\lambda)\cdot\frac 1n Z_1A(l)Z^*_1\rightarrow(l-\lambda)(1+y\lambda s(\lambda))\cdot I_M~.
  \end{align*}
  \noindent {\bf Proof of \eqref{x2}:}
  Bringing the expression of $A(l)$ into consideration, we first have
  \begin{align*}
    &\quad A(l)-A(\lambda)\\
    &=Z^*_2(\lambda I_p-S)^{-1}\Big(\frac 1n Z_2Z^*_2\Big)^{-1}\frac{\lambda}{n}Z_2
    -Z^*_2(l I_p-S)^{-1}\Big(\frac 1n Z_2Z^*_2\Big)^{-1}\frac{l}{n}Z_2\\
    &=Z^*_2(\lambda I_p-S)^{-1}\Big(\frac 1n Z_2Z^*_2\Big)^{-1}\frac{\lambda-l}{n}Z_2
    +Z^*_2\left[(\lambda I_p-S)^{-1}-(l I_p-S)^{-1}\right]\Big(\frac 1n Z_2Z^*_2\Big)^{-1}\frac{l}{n}Z_2\\
    &=(l-\lambda)\cdot\left[-Z^*_2(\lambda I_p-S)^{-1}\Big(\frac 1n Z_2Z^*_2\Big)^{-1}\frac{1}{n}Z_2
      + Z^*_2(\lambda I_p-S)^{-1}(l I_p-S)^{-1}\Big(\frac 1n Z_2Z^*_2\Big)^{-1}\frac{l}{n}Z_2\right]~.
  \end{align*}
  Then using Lemma \ref{f1} for the same reason, we have
  \begin{align*}
    \quad\frac {\lambda}{n} Z_1[A(l)-A(\lambda)]Z^*_1 \rightarrow \frac{\lambda}{n}\left\{\e \tr \left(A(l)-A(\lambda)\right)\right\}\cdot I_M~,
  \end{align*}
  and
  \begin{align*}
    &\quad\frac{1}{n}\e \tr \left(A(l)-A(\lambda)\right)=(l-\lambda)\cdot \bigg[-\frac {1}{n}\e \tr \left\{Z^*_2(\lambda I_p-S)^{-1}\Big(\frac 1n Z_2Z^*_2\Big)^{-1}\frac{1}{n}Z_2\right\}\\
    &\quad\quad\quad\quad \quad\quad\quad\quad+\frac {1}{n} \e \tr\left\{ Z^*_2(\lambda I_p-S)^{-1}(l I_p-S)^{-1}\Big(\frac 1n Z_2Z^*_2\Big)^{-1}\frac{l}{n}Z_2\right\}\bigg]\\
    &=(l-\lambda)\cdot \bigg[-\frac {1}{n}\e \tr (\lambda I_p-S)^{-1}+\frac {\lambda}{n} \e \tr(\lambda I_p-S)^{-2}+o(1)\bigg]\\
    &=(l-\lambda)\cdot\Big[y \int \frac{1}{x-\lambda}dF_{c,y}(x)+\lambda y \int \frac{1}{(\lambda-x)^2}dF_{c,y}(x)+o(1)\Big]\\
    &=(l-\lambda)\cdot\Big[ y s(\lambda)+\lambda y m_1(\lambda)+o(1)\Big]~.
  \end{align*}
  Therefore, we have
  \begin{align*}
    \quad\frac {\lambda}{n} Z_1[A(l)-A(\lambda)]Z^*_1 \rightarrow (l-\lambda)\cdot( y \lambda s(\lambda)+\lambda^2 y m_1(\lambda))\cdot I_M~.
  \end{align*}

  \noindent {\bf Proof of \eqref{x3}:}\\
  First recall the fact that
  $
  \cov (X_1)=U\left(\begin{array}{c}
      a_1 \quad\quad\\
      \quad\ddots \quad\\
      \quad \quad a_k
    \end{array}\right)U^{*}~
  $
  and $X_1$ is independent of $B$. Using Lemma \ref{f1},we have
  \begin{align*}
    \frac 1TX_1(B(l)-B(\lambda))X^*_1\rightarrow  \frac 1T\e \tr(B(l)-B(\lambda))\cdot U\left(\begin{array}{c}
        a_1 \quad\quad\\
        \quad\ddots \quad\\
        \quad \quad a_k
      \end{array}\right)U^{*}~.
  \end{align*}
  The part
  \begin{align*}
    &\quad\frac 1T\e \tr(B(l)-B(\lambda))\\
    &=\frac 1T\e\tr\left\{X^*_2\bigg[(l I_p-S)^{-1}-(\lambda I_p-S)^{-1}\bigg]\Big(\frac 1n Z_2Z^*_2\Big)^{-1}\frac{1}{T}X_2\right\}\\
    &=(l-\lambda)\cdot\left[-\frac 1T \e \tr \left\{(\lambda I_p-S)^{-2}S\right\}+o(1)\right]\\
    &=(l-\lambda)\cdot\left[-c\int \frac{x}{(\lambda-x)^2}dF_{c,y}(x)+o(1)\right]\\
    &=(l-\lambda)\cdot(-cm_3(\lambda)+o(1))~.
  \end{align*}
  Therefore, we have
  \begin{align*}
    \frac 1TX_1(B(l)-B(\lambda))X^*_1\rightarrow -c(l-\lambda)m_3(\lambda)\cdot U \left(\begin{array}{c}
        a_1 \quad\quad\\
        \quad\ddots \quad\\
        \quad \quad a_k
      \end{array}\right)U^{*}~.
  \end{align*}

  \noindent {\bf Proof of \eqref{x4} and \eqref{x5}:}
  \eqref{x4} and \eqref{x5} are derived simply due to the fact that $\cov (X_1,Z_1)={\bf 0}_{M \times M}$~.

  \noindent The proof of Lemma \ref{asl} is complete.
\end{proof}

\begin{lemma}\label{lemm3}
  Define
  \begin{align*}
    \tilde{R}_n(\lambda_i):=\begin{pmatrix}
      Z_1 & S_1 \\
    \end{pmatrix}\left(\begin{array}{cc}
        \frac{\lambda_i \sqrt p A(\lambda_i)}{ n} & \frac{\lambda_i  \sqrt {a_ip} C(\lambda_i)}{ n}\\[2mm]
        \frac{\lambda_i \sqrt {a_ip} D(\lambda_i)}{ T} & \frac{-a_i \sqrt p B(\lambda_i)}{T}
      \end{array}\right) \begin{pmatrix}
      Z^*_1 \\[3mm]
      S^*_1 \\
    \end{pmatrix}-\E [\cdot]~.
  \end{align*}
  then
  $\tilde{R}_n(\lambda_i)$ weakly converges to a $M \times M$ symmetric random matrix $R(\lambda_i)=(R_{mn})$, which is made  with independent Gaussian entries of  mean zero and variance
  \begin{align*}
    \var (R_{mn})=\left\{\begin{array}{ll}
        2 \theta_i +(v_4-3)\omega_i~,& m=n~,\\
        \theta_i~, & m\neq n~,
      \end{array}\right.
  \end{align*}
  where
  \begin{align*}
    &\omega_i=\frac{a_i^2(a_i+c-1)^2(c+y)}{(a_i-1)^2}~,\\
    &\theta_i=\frac{a_i^2(a_i+c-1)^2(cy-c-y)}{-1+2a_i+c+a_i^2(y-1)}~.
  \end{align*}
\end{lemma}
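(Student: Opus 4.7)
\textbf{Proof proposal for Lemma~\ref{lemm3}.}

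The plan is to recognize $\tilde{R}_n(\lambda_i)$ as a random sesquilinear form and apply the CLT of \cite{BaiYao08} (Proposition 3.1 and Remark 1). Write $Y_n=(Z_1,S_1)$, which is an $M\times(n+T)$ array of i.i.d.\ mean-zero, variance-one entries with common fourth moment $v_4$, and let
\[
M_n(\lambda_i)=\begin{pmatrix}
\frac{\lambda_i\sqrt p\,A(\lambda_i)}{n} & \frac{\lambda_i\sqrt{a_ip}\,C(\lambda_i)}{n}\\[1mm]
\frac{\lambda_i\sqrt{a_ip}\,D(\lambda_i)}{T} & \frac{-a_i\sqrt p\,B(\lambda_i)}{T}
\end{pmatrix},
\]
so that $\tilde{R}_n(\lambda_i)=Y_nM_n(\lambda_i)Y_n^{*}-\E[\cdot]$. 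Since the kernel $M_n$ is built only from $(Z_2,X_2)$, it is independent of $Y_n$. Conditionally on $(Z_2,X_2)$, the CLT for random sesquilinear forms asserts joint asymptotic normality of the entries of $\tilde{R}_n(\lambda_i)$, with off-diagonal variance
\[
\var(R_{mn})=\lim\tfrac{1}{n+T}\tr\bigl(M_n(\lambda_i)M_n(\lambda_i)^{T}\bigr)=:\theta_i \quad(m\ne n),
\]
and diagonal variance $\var(R_{mm})=2\theta_i+(v_4-3)\omega_i$, where
\[
\omega_i=\lim\tfrac{1}{n+T}\sum_{k=1}^{n+T}M_n(\lambda_i)(k,k)^2.
\]
Independence of the upper-triangular entries then follows from the standard CLT framework. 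The remaining work is thus the two deterministic limits $\theta_i$ and $\omega_i$.

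For $\theta_i$ the trace breaks into four pieces along the blocks:
\[
\tfrac{1}{n+T}\tr(M_nM_n^{T})=\tfrac{\lambda_i^2 p}{(n+T)n^2}\tr A^2+\tfrac{a_i^2 p}{(n+T)T^2}\tr B^2-\tfrac{2\lambda_i^2 a_i p}{(n+T)nT}\tr(CD^{T}),
\]
after noting that $M_n$ is symmetric only in the sense of the form, and collecting signs. Using the explicit form of $A,B,C,D$ in \eqref{abcd} and the identity $\tr(\lambda I-S)^{-k}=p\int(\lambda-x)^{-k}dF_{c,y}(x)+o(p)$, each trace reduces to a polynomial in $s(\lambda_i)$ and $m_1(\lambda_i),\ldots,m_4(\lambda_i)$. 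For instance, $\tfrac{1}{n}\tr A^2$ expands to a combination of $1$, $\lambda_i s(\lambda_i)$, $\lambda_i^2 m_1(\lambda_i)$, $\lambda_i^2 m_3(\lambda_i)$ via $Z_2^{*}Z_2/n\approx$ identity on the relevant subspace. Substituting the closed-form values from Lemma~\ref{five} (which were evaluated at $\lambda_i=\phi(a_i)$) and simplifying, one arrives at the claimed
\[
\theta_i=\frac{a_i^2(a_i+c-1)^2(cy-c-y)}{-1+2a_i+c+a_i^2(y-1)}.
\]

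For $\omega_i$, only the diagonal entries of the four blocks enter. Each diagonal entry of, say, $\tfrac{\lambda_i\sqrt p}{n}A(\lambda_i)$ is $O(1/\sqrt p)$, so $\sum_k A(\lambda_i)(k,k)^2$ scales like $p\cdot(1/p)\cdot\bigl(\tfrac{1}{n}\tr A\bigr)^2$ up to a concentration correction; more precisely, one uses that for a matrix $G$ independent of the index, $\sum_k G(k,k)^2\sim \tfrac{1}{p}(\tr G)^2$ together with the computations already carried out for $A,B,C,D$ in Lemma~\ref{asl}. Plugging in the limit values $\tfrac{1}{n}\tr A\to 1+y\lambda_i s(\lambda_i)$, etc., and again invoking Lemma~\ref{five}, the algebra collapses to
\[
\omega_i=\frac{a_i^2(a_i+c-1)^2(c+y)}{(a_i-1)^2}.
\]

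The main obstacle is the algebraic simplification of the trace identities: although each elementary trace reduces to a rational expression in $s$ and $m_k$, combining the four block contributions produces long expressions that must be collapsed using the Stieltjes-transform identity \eqref{est3} and the quadratic equation for $\underline{s}(z)$ to obtain the clean closed forms in $\theta_i$ and $\omega_i$. Once this bookkeeping is done, almost-sure convergence of the deterministic limits (justified by Lemma~\ref{l1} and the standard resolvent concentration for $S$) upgrades the conditional CLT to an unconditional one, completing the proof.
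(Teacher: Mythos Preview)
Your overall plan coincides with the paper's: view $(Z_1,S_1)$ as an $M\times(n+T)$ i.i.d.\ array, apply Bai--Yao's Proposition~3.1 to the sesquilinear form, and reduce everything to the two limits $\theta_i,\omega_i$. The execution, however, has two problems.

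For $\theta_i$ there is a sign/normalisation slip. The kernel $M_n$ is \emph{genuinely} symmetric, because $T\,C(\lambda)^*=n\,D(\lambda)$ (immediate from the definitions in \eqref{abcd}), so $(\lambda_i\sqrt{a_ip}\,C/n)^*=\lambda_i\sqrt{a_ip}\,D/T$. Hence $\tr(M_nM_n^T)=\tr M_n^2$, and the cross term is $+\dfrac{2\lambda_i^2 a_i p}{nT}\tr(CD)$, not with a minus sign; the minus in the $(-a_i\sqrt p\,B/T)$ block only enters squared. Also, with $A_n:=\sqrt{n+T}\,M_n$ one has $\theta_i=\lim\frac{1}{n+T}\tr A_n^2=\lim\tr M_n^2$, so your extra $1/(n+T)$ should not be there.

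The more serious gap is in $\omega_i$. Your step ``$\sum_k G(k,k)^2\sim\frac{1}{p}(\tr G)^2$ for a matrix $G$ independent of the index'' assumes exactly what has to be proved, namely that each individual diagonal entry $A(k,k)$ (resp.\ $B(k,k)$) tends to a common deterministic limit; the trace results of Lemma~\ref{asl} do not give this. The paper obtains it by a leave-one-out argument: writing $\frac{1}{n}Z_2Z_2^*=\frac{1}{n}\eta_k\eta_k^*+\frac{1}{n}Z_{2k}Z_{2k}^*$ and applying the Sherman--Morrison identity yields
\[
A(k,k)=\Bigl(1+\tfrac{\lambda_i}{n}\,\eta_k^*\bigl(\lambda_i\cdot\tfrac{1}{n}Z_{2k}Z_{2k}^*-\tfrac{1}{T}X_2X_2^*\bigr)^{-1}\eta_k\Bigr)^{-1},
\]
and since $\eta_k$ is independent of the bracketed matrix, Lemma~\ref{l1} gives the denominator $\to 1+\lambda_i y/(a_i+c-1)$, a constant in $k$. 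An analogous leave-one-out on the columns of $X_2$ gives $B(k,k)\to\bigl(1-c/(a_i+c-1)\bigr)^{-1}$. Only after these entrywise limits can one write
\[
\omega_i=\frac{\lambda_i^2 y}{\bigl(1+\frac{\lambda_i y}{a_i+c-1}\bigr)^2}+\frac{a_i^2 c}{\bigl(1-\frac{c}{a_i+c-1}\bigr)^2}
\]
and simplify to the stated form. Your ``concentration correction'' remark does not substitute for this argument.
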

\begin{proof}
  \noindent Since $Z_1$ and $S_1$ are independent, having the same first four moments, both are made with i.i.d. components, we can now view $\left(\begin{array}{cc}
      Z_{1}&S_{1}\end{array}\right)$ as a $M \times (n+T)$ table $\xi$, made with i.i.d elements of mean 0 and variance 1. Besides, we can rewrite the expression of $A(\lambda)$, $B(\lambda)$, $C(\lambda)$ and $D(\lambda)$ as follows:
  \begin{align*}
    A(\lambda)&=I_n-Z^*_2\Big(\lambda \cdot \frac 1n Z_2Z^*_2-\frac 1T X_2X^*_2\Big)^{-1}\frac{\lambda}{n}Z_2~,\nonumber\\
    B(\lambda)&=I_T+X^*_2\Big(\lambda \cdot \frac 1n Z_2Z^*_2-\frac 1T X_2X^*_2\Big)^{-1}\frac{1}{T}X_2~,\nonumber\\
    C(\lambda)&=Z^*_2\Big(\lambda \cdot \frac 1n Z_2Z^*_2-\frac 1T X_2X^*_2\Big)^{-1}\frac{1}{T}X_2~,\nonumber\\
    D(\lambda)&=X^*_2\Big(\lambda \cdot \frac 1n Z_2Z^*_2-\frac 1T X_2X^*_2\Big)^{-1}\frac{1}{n}Z_2~.
  \end{align*}
  It holds
  \begin{align*}
    A(\lambda)^*=A(\lambda)~, ~B(\lambda)^*=B(\lambda)~, ~T\cdot C(\lambda)^*=n\cdot D(\lambda)~,
  \end{align*}
  therefore, the matrix
  \begin{align*}
    \left(\begin{array}{cc}
        \frac{\lambda_i \sqrt p A(\lambda_i)}{ n} & \frac{\lambda_i  \sqrt {a_ip} C(\lambda_i)}{ n}\\[2mm]
        \frac{\lambda_i \sqrt {a_ip} D(\lambda_i)}{ T} & \frac{-a_i \sqrt p B(\lambda_i)}{T}
      \end{array}\right)
  \end{align*}
  is symmetric. Define
  \begin{align}\label{anl}
    A_n(\lambda_i)=\sqrt {n+T}\cdot\left(\begin{array}{cc}
        \frac{\lambda_i \sqrt p A(\lambda_i)}{ n} & \frac{\lambda_i  \sqrt {a_ip} C(\lambda_i)}{ n}\\[2mm]
        \frac{\lambda_i \sqrt {a_ip} D(\lambda_i)}{ T} & \frac{-a_i \sqrt p B(\lambda_i)}{T}
      \end{array}\right)~.
  \end{align}Now we can apply the results in \cite{BaiYao08} (Proposition 3.1 and Remark 1), which says that $\tilde{R}_n(\lambda_i)$ weakly converges to a $M \times M$ symmetric random matrix $R(\lambda_i)=(R_{mn})$, which is made  with i.i.d. Gaussian entries of  mean zero and variance
  \begin{align*}
    \var (R_{mn})=\left\{\begin{array}{ll}
        2 \theta_i +(v_4-3)\omega_i~,& m=n~,\\
        \theta_i~, & m\neq n~,
      \end{array}\right.
  \end{align*}
  The following is devoted to the calculation of the values of $\theta_i$ and $\omega_i$.

  \noindent {\bf Calculating of $\theta_i$:}
  From the definition of $\theta$ (see \cite{BaiYao08} for details), we have
  \begin{align}\label{aa1}
    \theta_i&=\lim \frac{1}{n+T} \tr A^2_n(\lambda_i)\nonumber\\
    &=\lim  \tr\left(\begin{array}{cc}
        \frac{\lambda_i \sqrt p A(\lambda_i)}{ n} & \frac{\lambda_i  \sqrt {a_ip} C(\lambda_i)}{ n}\\[2mm]
        \frac{\lambda_i \sqrt {a_ip} D(\lambda_i)}{ T} & \frac{-a_i \sqrt p B(\lambda_i)}{T}
      \end{array}\right)\left(\begin{array}{cc}
        \frac{\lambda_i \sqrt p A(\lambda_i)}{ n} & \frac{\lambda_i  \sqrt {a_ip} C(\lambda_i)}{ n}\\[2mm]
        \frac{\lambda_i \sqrt {a_ip} D(\lambda_i)}{ T} & \frac{-a_i \sqrt p B(\lambda_i)}{T}
      \end{array}\right)\nonumber\\
    &=\lim  \tr \left(\begin{array}{cc}
        \frac{p\lambda_i^2}{n^2}A^2(\lambda_i)+\frac{\lambda_i^2 a_ip}{nT}C(\lambda_i)D(\lambda_i) & \star\\[2mm]
        \star & \frac{\lambda_i^2 a_ip}{nT}D(\lambda_i)C(\lambda_i)+\frac{a^2_ip}{T^2} B^2(\lambda_i)
      \end{array}\right)\nonumber\\
    &=\lim \left[\frac{p\lambda_i^2}{n^2}\tr A^2(\lambda_i)+\frac{2\lambda_i^2 a_ip}{nT}\tr C(\lambda_i)D(\lambda_i)+\frac{a^2_ip}{T^2}\tr B^2(\lambda_i)\right]~.
  \end{align}
  \begin{align}\label{aa2}
    \tr A^2(\lambda_i)&=\tr \left[I_n+Z^*_2(\lambda_i I_p-S)^{-1}\left(\frac 1n Z_2Z^*_2\right)^{-1}\frac {\lambda_i}{n} Z_2Z^*_2(\lambda_i I_p-S)^{-1}\left(\frac 1n Z_2Z^*_2\right)^{-1}\frac {\lambda_i}{n} Z_2\right.\nonumber\\
    &\quad \quad \left.-2Z^*_2(\lambda_i I_p-S)^{-1}\left(\frac 1n Z_2Z^*_2\right)^{-1}\frac {\lambda_i}{n} Z_2\right]\nonumber\\
    &=n+\lambda_i^2\tr (\lambda_i I_p-S)^{-2}-2\lambda_i \tr (\lambda_i I_p-S)^{-1}\nonumber\\
    &=n+p\lambda_i^2m_1(\lambda_i)+2p\lambda_i s(\lambda_i)~,
  \end{align}

  \begin{align}\label{aa3}
    \tr C(\lambda_i)D(\lambda_i)&=\tr \left\{Z^*_2(\lambda_i I_p-S)^{-1}\left(\frac 1n Z_2Z^*_2\right)^{-1}\frac 1T X_2X^*_2(\lambda_i I_p-S)^{-1}\left(\frac 1n Z_2Z^*_2\right)^{-1}\frac 1n Z_2\right\}\nonumber\\
    &=\tr (\lambda_i I_p-S)^{-1}S(\lambda_i I_p-S)^{-1}=pm_3(\lambda_i)
  \end{align}

  \begin{align}\label{aa4}
    \tr B^2(\lambda_i)&=\tr \left[I_T+X^*_2(\lambda_i I_p-S)^{-1}\left(\frac 1n Z_2Z^*_2\right)^{-1}\frac {1}{T} X_2X^*_2(\lambda_i I_p-S)^{-1}\left(\frac 1n Z_2Z^*_2\right)^{-1}\frac {1}{T} X_2\right.\nonumber\\
    &\quad \quad \left.+2X^*_2(\lambda_i I_p-S)^{-1}\left(\frac 1n Z_2Z^*_2\right)^{-1}\frac {1}{T} X_2\right]\nonumber\\
    &=T+\tr (\lambda_i I_p-S)^{-1}F(\lambda_i I_p-S)^{-1}S+2\tr (\lambda_i I_p-S)^{-1}S\nonumber\\
    &=T+pm_4(\lambda_i)+2p m_2(\lambda_i)~,
  \end{align}

  \noindent Combining \eqref{aa1}, \eqref{aa2}, \eqref{aa3} and \eqref{aa4}, we have
  \begin{align*}
    \theta_i&=\lambda_i^2y(1+y\lambda_i^2m_1(\lambda_i)+2y\lambda_i s(\lambda_i))+2\lambda_i^2 a_icym_3(\lambda_i)+a^2_ic(1+cm_4(\lambda_i)+2cm_2(\lambda_i))\\
    &=\frac{a^2_i(a_i+c-1)^2(cy-c-y)}{-1+2a_i+c+a^2_i(y-1)}.
  \end{align*}

  \noindent {\bf Calculating of $\omega_i$:}
  \begin{align}\label{w}
    \omega_i=\lim \frac{1}{n+T}\sum_{i=1}^{n+T}(A_n(\lambda_i)(i,i))^2=\lim \left[\sum_{i=1}^{n}\frac{\lambda_i^2p}{n^2}A^2(i,i)+\sum_{i=1}^{T}\frac{a^2_ip}{T^2}B^2(i,i)\right]~.
  \end{align}

  \noindent In the following, we will show that $A(i,i)$ and $B(i,i)$ both tend to some limits that is independent of $i$.
  \begin{align}\label{aaaa2}
    A(i,i)&=1-\bigg[Z^*_2\Big[\lambda_i I_p-\big(\frac 1n Z_2Z^*_2\big)^{-1}\frac 1T X_2X^*_2\Big]^{-1}\big(\frac 1n Z_2Z^*_2\big)^{-1}\frac{\lambda_i}{n}Z_2\bigg](i,i)\nonumber\\
    &=1-\frac{\lambda_i}{n}\bigg[Z^*_2\Big[\lambda_i\cdot \frac 1n Z_2Z^*_2-\frac 1T X_2X^*_2\Big]^{-1}Z_2\bigg](i,i)
  \end{align}
  If we denote $\eta_i$ as the $i$-th column of $Z_2$,  we have
  \begin{align*}
    \frac 1n Z_2Z^*_2=\frac 1n\begin{pmatrix}
      \eta_1 & \cdots & \eta_n \\
    \end{pmatrix}
    \cdot \begin{pmatrix}
      \eta^*_1 \\
      \vdots \\
      \eta^*_n \\
    \end{pmatrix}=\frac 1n \eta_i \eta^*_i+\frac 1n Z_{2i}Z^*_{2i}~,
  \end{align*}
  where $Z_{2i}$ is independent of $\eta_i$.
  Since
  \begin{align*}
    &\quad \Big(\lambda_i\cdot \frac 1n Z_2Z^*_2-\frac 1T X_2X^*_2\Big)^{-1}-\Big(\lambda_i\cdot \frac 1n Z_{2i}Z^*_{2i}-\frac 1T X_2X^*_2\Big)^{-1}\\
    &=
    -\Big(\lambda_i\cdot \frac 1n Z_2Z^*_2-\frac 1T X_2X^*_2\Big)^{-1}\frac {\lambda_i}{n} \eta_i \eta^*_i \Big(\lambda_i\cdot \frac 1n Z_{2i}Z^*_{2i}-\frac 1T X_2X^*_2\Big)^{-1}~,
  \end{align*}
  we have
  \begin{align}\label{aaaa1}
    \Big(\lambda_i\cdot \frac 1n Z_2Z^*_2-\frac 1T X_2X^*_2\Big)^{-1}=\frac{\Big(\lambda_i\cdot \frac 1n Z_{2i}Z^*_{2i}-\frac 1T X_2X^*_2\Big)^{-1}}{1+\frac {\lambda_i}{n} \eta^*_i\Big(\lambda_i\cdot \frac 1n Z_{2i}Z^*_{2i}-\frac 1T X_2X^*_2\Big)^{-1}\eta_i}~.
  \end{align}
  Bringing \eqref{aaaa1} into \eqref{aaaa2},
  \begin{align*}
    A(i,i)&=1-\frac{\lambda_i}{n}\eta^*_i\Big[\lambda_i\cdot \frac 1n Z_2Z^*_2-\frac 1T X_2X^*_2\Big]^{-1}\eta_i\nonumber\\
    &=1-\frac{\frac{\lambda_i}{n}\eta^*_i\Big(\lambda_i\cdot \frac 1n Z_{2i}Z^*_{2i}-\frac 1T X_2X^*_2\Big)^{-1}\eta_i}{1+\frac {\lambda_i}{n} \eta^*_i\Big(\lambda_i\cdot \frac 1n Z_{2i}Z^*_{2i}-\frac 1T X_2X^*_2\Big)^{-1}\eta_i}\nonumber\\
    &=\frac{1}{1+\frac {\lambda_i}{n} \eta^*_i\Big(\lambda_i\cdot \frac 1n Z_{2i}Z^*_{2i}-\frac 1T X_2X^*_2\Big)^{-1}\eta_i}~,
  \end{align*}
  whose denominator of \eqref{aaaa3} equals
  \begin{align}\label{aaaa3}
    1+\frac {\lambda_i}{n} \tr \Big(\lambda_i\cdot \frac 1n Z_{2i}Z^*_{2i}-\frac 1T X_2X^*_2\Big)^{-1}\eta_i\eta^*_i~.
  \end{align}
  Since $\eta_i$ is independent of $\Big(\lambda_i\cdot \frac 1n Z_{2i}Z^*_{2i}-\frac 1T X_2X^*_2\Big)^{-1}$,  \eqref{aaaa3}
  converges to the value $1+\lambda_i y \cdot \frac{1}{a_i+c-1}$ according to Lemma \ref{l1}.
  Therefore, we have
  \begin{align}
    A(i,i)\rightarrow \frac{1}{1+\lambda_i y\cdot \frac{1}{a_i+c-1}}~,
  \end{align}
  which is independent of the choice of $i$.

  \noindent For the same reason, we have
  \begin{align}\label{b1}
    B(i,i)&=1+\bigg[X^*_2\Big[\lambda_i I_p-\big(\frac 1n Z_2Z^*_2\big)^{-1}\frac 1T X_2X^*_2\Big]^{-1}\big(\frac 1n Z_2Z^*_2\big)^{-1}\frac{1}{T}X_2\bigg](i,i)\nonumber\\
    &=1+\bigg[X^*_2\Big[\lambda_i \cdot \frac 1n Z_2Z^*_2-\frac 1T X_2X^*_2\Big]^{-1}\frac{1}{T}X_2\bigg](i,i)~.
  \end{align}
  If we denote $\delta_i$ as the $i$-th column of $X_2$, then we have
  \begin{align*}
    \frac 1T X_2X^*_2=\frac 1T\begin{pmatrix}
      \delta_1 & \cdots & \delta_T \\
    \end{pmatrix}
    \cdot \begin{pmatrix}
      \delta^*_1 \\
      \vdots \\
      \delta^*_T \\
    \end{pmatrix}=\frac 1T \delta_i \delta^*_i+\frac 1T X_{2i}X^*_{2i}~,
  \end{align*}
  and
  \begin{align*}
    &\quad \Big(\lambda_i\cdot \frac 1n Z_2Z^*_2-\frac 1T X_2X^*_2\Big)^{-1}-\Big(\lambda_i\cdot \frac 1n Z_{2}Z^*_{2}-\frac 1T X_{2i}X^*_{2i}\Big)^{-1}\\
    &=
    \Big(\lambda_i\cdot \frac 1n Z_2Z^*_2-\frac 1T X_2X^*_2\Big)^{-1}\frac {1}{T} \delta_i \delta^*_i \Big(\lambda_i\cdot \frac 1n Z_{2}Z^*_{2}-\frac 1T X_{2i}X^*_{2i}\Big)^{-1}~.
  \end{align*}
  So we have
  \begin{align}\label{b2}
    \Big(\lambda_i\cdot \frac 1n Z_2Z^*_2-\frac 1T X_2X^*_2\Big)^{-1}=\frac{\Big(\lambda_i\cdot \frac 1n Z_{2}Z^*_{2}-\frac 1T X_{2i}X^*_{2i}\Big)^{-1}}{1-\frac 1 T\delta^*_i\Big(\lambda_i\cdot \frac 1n Z_{2}Z^*_{2}-\frac 1T X_{2i}X^*_{2i}\Big)^{-1}\delta_i}~.
  \end{align}
  Combine \eqref{b1} and \eqref{b2}, we have
  \begin{align}
    B(i,i)&=1+\delta^*_i\Big[\lambda_i \cdot \frac 1n Z_2Z^*_2-\frac 1T X_2X^*_2\Big]^{-1}\frac{1}{T}\delta_i\nonumber\\
    &=1+\frac{\frac 1 T\delta^*_i\Big(\lambda_i\cdot \frac 1n Z_{2}Z^*_{2}-\frac 1T X_{2i}X^*_{2i}\Big)^{-1}\delta_i}{1-\frac 1 T\delta^*_i\Big(\lambda_i\cdot \frac 1n Z_{2}Z^*_{2}-\frac 1T X_{2i}X^*_{2i}\Big)^{-1}\delta_i}\nonumber\\
    &=\frac{1}{1-\frac 1 T\delta^*_i\Big(\lambda_i\cdot \frac 1n Z_{2}Z^*_{2}-\frac 1T X_{2i}X^*_{2i}\Big)^{-1}\delta_i}~.
  \end{align}
  Using the independence between $\delta_i$ and $\Big(\lambda_i\cdot \frac 1n Z_{2}Z^*_{2}-\frac 1T X_{2i}X^*_{2i}\Big)^{-1}$ and Lemma \ref{l1} again, we have
  \[
  \frac 1 T\delta^*_i\Big(\lambda_i\cdot \frac 1n Z_{2}Z^*_{2}-\frac 1T X_{2i}X^*_{2i}\Big)^{-1}\delta_i\rightarrow c\cdot \frac{1}{a_i+c-1}~.
  \]

  \noindent Therefore, we have
  \[
  B(i,i)\rightarrow \frac{1}{1-\frac{c}{a_i+c-1}}~,
  \]
  which is also independent of the choice of $i$.

  \noindent Finally, taking the definition of $\omega_i$ in \eqref{w} into consideration, we have
  \begin{align}
    \omega_i=\frac{\lambda_i^2y}{\left(1+y\lambda_i \cdot \frac{1}{a_i+c-1}\right)^2}+\frac{a^2_ic}{\left(1-\frac{c}{a_i+c-1}\right)^2}=\frac{a^2_i(a_i+c-1)^2(c+y)}{(a_i-1)^2}~.
  \end{align}


  \noindent The proof of Lemma \ref{lemm3} is complete.

\end{proof}

\end{document}